\renewcommand{\mathbf}{\mathbold}
\numberwithin{equation}{section}
\newtheorem{Theorem}[equation]{Theorem}
\newtheorem{Proposition}[equation]{Proposition}
\newtheorem{Conjecture}[equation]{Conjecture}
\theoremstyle{definition}
\newtheorem{Remark}[equation]{Remark}
\newtheorem{eg}[equation]{Example}
\newcommand{\bB}{\mathbf{B}}
\newcommand{\cF}{\mathcal{F}}
\newcommand{\bG}{\mathbf{G}}
\newcommand{\cG}{\mathcal{G}}
\newcommand{\fG}{\mathfrak{G}}
\newcommand{\cK}{\mathcal{K}}
\newcommand{\cM}{\mathcal{M}}
\newcommand{\cN}{\mathcal{N}}
\newcommand{\cO}{\mathcal{O}}
\newcommand{\cP}{\mathcal{P}}
\newcommand{\bT}{\mathbf{T}}
\newcommand{\bU}{\mathbf{U}}
\newcommand{\cU}{\mathcal{U}}
\newcommand{\cZ}{\mathcal{Z}}
\newcommand{\fb}{\mathfrak{b}}
\newcommand{\fg}{\mathfrak{g}}
\newcommand{\fl}{\mathfrak{l}}
\newcommand{\fp}{\mathfrak{p}}
\newcommand{\fs}{\mathfrak{s}}
\newcommand{\fu}{\mathfrak{u}}
\newcommand{\CC}{\mathbb{C}}
\newcommand{\EE}{\mathbb{E}}
\newcommand{\FF}{\mathbb{F}}
\renewcommand{\FF}{\mathrm{Frac}}
\newcommand{\GG}{\mathbb{G}}
\newcommand{\HH}{\mathbb{H}}
\newcommand{\OO}{\mathbb{O}}
\newcommand{\ZZ}{\mathbb{Z}}
\renewcommand{\phi}{\varphi}
\renewcommand{\emptyset}{\varnothing}
\renewcommand{\tilde}[1]{\widetilde{#1}}
\def\Ddots{\mathinner{\mkern1mu\raise\p@
\vbox{\kern7\p@\hbox{.}}\mkern2mu
\raise4\p@\hbox{.}\mkern2mu\raise7\p@\hbox{.}\mkern1mu}}
\newcommand{\Hom}{\operatorname{Hom}}
\DeclareMathOperator{\sgn}{sgn}
\newcommand{\GL}{\mathrm{GL}}
\newcommand{\SL}{\mathrm{SL}}
\newcommand{\Gr}{\cG r}
\newcommand{\suchthat}{\mid}
\newcommand{\textif}{\text{ if }}
\newcommand{\textand}{\text{ }\mathrm{and}\text{ }}
\renewcommand{\sl}{\fs\fl}
\newcommand{\gl}{\fg\fl}
\newcommand{\tcN}{\tilde{\cN}}
\newcommand{\tfg}{\tilde{\fg}}
\newcommand{\eu}{\mathrm{eu}}
\newcommand{\pt}{\mathrm{pt}}
\newcommand{\del}{\partial}
\newcommand{\e}{\tilde{e}}
\newcommand{\f}{\tilde{f}}
\renewcommand{\top}{\mathrm{top}}
\newcommand{\Gro}{\mathbf{Gro}}
\newcommand{\Irr}{\mathrm{Irr}}
\newcommand{\E}{\EE}
\newcommand{\dd}{{\underline{d}}}
\newcommand{\height}{\mathrm{ht}}
\newcommand{\terminology}[1]{{\bf #1}}
\begin{document}

\title[Weight zero Mirkovi\'c-Vilonen basis]{Weyl group action on weight zero Mirkovi\'c-Vilonen basis and equivariant multiplicities}
\author{Dinakar Muthiah}
\address{Kavli Institute for the Physics and Mathematics of the Universe (WPI), The University of Tokyo Institutes for Advanced Study, The University of Tokyo, Kashiwa, Chiba 277-8583, Japan}
\email{dinakar.muthiah@ipmu.jp}
\maketitle

\begin{abstract}
  We state a conjecture about the Weyl group action coming from Geometric Satake on zero-weight spaces in terms of equivariant multiplicities of  Mirkovi\'c-Vilonen cycles. We prove it for small coweights in type A. In this case, using work of Braverman, Gaitsgory and Vybornov, we show that the Mirkovi\'c-Vilonen basis agrees with the Springer basis.  We rephrase this in terms of equivariant multiplicities using work of Joseph and Hotta. We also have analogous results for Ginzburg's Lagrangian construction of $\mathfrak{sl}_n$ representations. 
\end{abstract}

\section{Introduction}

The celebrated Geometric Satake equivalence of Mirkovi\'c and Vilonen \cite{Mirkovic-Vilonen} provides a geometric realization of the category of representations of a reductive group. Their work is a foundational result in the geometric Langlands program, and they work with arbitrary coefficients, so their work is intimately connected with modular representation theory. Finally, of most relevance to us, Mirkovi\'c and Vilonen construct a basis in every irreducible representation (we work with $\CC$-coefficients) indexed by the so-called Mirkovi\'c-Vilonen (MV) cycles. The combinatorial study of such cycles via the theory of MV polytopes has progressed quite far (see e.g. \cite{Kamnitzer}), but understanding of the MV basis remains fairly murky (see \cite{Baumann} for some general progress in this direction and \cite{Braverman-Gaitsgory-Vybornov} for a type A description that plays a critical role in this paper).

Even when compared with other geometric constructions of representations, many of which involve difficult intersection theory, the Geometric Satake equivalence is more mysterious.  Unlike many other geometric constructions, the action is not via an explicit description of Chevalley generators. Rather it follows from abstract facts about Tannakian categories. Vasserot \cite{Vasserot-2002} has succeeded in describing the action of the Chevalley generators in general, but even his description has some mystery: the $E$ generators are constructed via a recipe involving the first Chern class of an ample line bundle, but the construction of the $F$-generators involves an appeal to the $\sl_2$-action of the Hard Lefschetz theorem.

\subsubsection{Main conjecture and theorem}
Let us briefly recall some details about the Mirkovi\'c-Vilonen basis. Fix a complex reductive group $\bG$. For each dominant coweight $\lambda$ of $\bG$ (the coweight $\lambda$ is a \emph{weight} for the Langlands dual group $\bG^\vee$), one can construct a variety $\overline{\Gr^\lambda}$. Under the Geometric Satake equivalence the intersection homology of $\overline{\Gr^\lambda}$ is canonically identified with the irreducible module $V(\lambda)$ of $\bG^\vee$ with highest weight $\lambda$. Additionally, for any coweight $\mu$ (not necessarily dominant) Mirkovi\'c and Vilonen consider an infinite-dimensional variety denoted $S_\mu$. They show that under Geometric Satake the $\mu$-weight space $V(\lambda)_\mu$ is canonically identified with the top dimensional compactly supported cohomology $H^\top_c(\overline{\Gr^\lambda} \cap S_\mu)$.

We will be interested in the case of $\mu = 0$, which corresponds to the zero weight space $V(\lambda)_0$.  We have an action of the Weyl group $W$ on this zero weight space, which is realized as $H^\top_c(\overline{\Gr^\lambda} \cap S_0)$. It will be more convenient to consider the $W$-action on the  dual space $H_\top(\overline{\Gr^\lambda} \cap S_0)$, which is the top-degree Borel-Moore homology.

A basis of $H_\top(\overline{\Gr^\lambda} \cap S_0)$ is given by fundamental classes of the irreducible components of $\overline{\Gr^\lambda} \cap S_0$. The space $\overline{\Gr^\lambda} \cap S_0$ is invariant under a maximal torus $\bT$ of $\bG$, and it contains a unique fixed point $0$. Therefore, for each irreducible component
$Z$ it makes sense to speak about $e^\bT_0(Z)$, which is the $T$-equivariant multiplicity of $Z$ at $0$. The $T$-equivariant multiplicity exactly gives the contribution at $0$ when expressing the fundamental class $[Z]$ using the localization theorem (see \S \ref{sec:localization-theorem}).

We have $e^\bT_0(Z) \in \FF(H^\bullet_\bT(\pt))$, which is the fraction field of the $\bT$-equivariant cohomology of a point. 
Observe that the Weyl group $W$ acts on $\FF(H^\bullet_\bT(\pt))$. A main goal of ours is to advance the following conjecture.

\begin{Conjecture}[Conjecture \ref{conj:equivariant-multiplicities-and-weight-zero}   in the main text]
  \label{conj:main-conjecture}
  The map
  \begin{align}
    \label{eq:114}
H_\top(\overline{\Gr^\lambda} \cap S_0) \rightarrow     \FF(H^\bullet_\bT(\pt))
  \end{align}
  defined on the MV basis by
  \begin{align}
    \label{eq:115}
   [Z] \mapsto e^T_0(Z) 
  \end{align}
for each $Z \in \Irr(\overline{\Gr^\lambda} \cap S_0)$ is $W$-equivariant.
\end{Conjecture}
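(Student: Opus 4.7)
The plan is to reformulate the statement as a compatibility between lifted equivariant classes and a geometrically realized $W$-action, then to reduce to manageable special cases. First I would lift the MV basis: each irreducible component $Z \subseteq \overline{\Gr^\lambda} \cap S_0$ is $\bT$-stable and contains $0$ as its unique $\bT$-fixed point, so $[Z]$ admits a canonical equivariant refinement $[Z]^\bT \in H^\bT_\top(\overline{\Gr^\lambda} \cap S_0)$. By the localization theorem (\S\ref{sec:localization-theorem}), the restriction $\iota_0^\ast [Z]^\bT$, divided by the equivariant Euler class of the tangent cone at $0$, equals $e^\bT_0(Z)$. Hence the conjecture is equivalent to the assertion that the Geometric Satake $W$-action on $H_\top(\overline{\Gr^\lambda} \cap S_0)$ extends to a $W$-action on the equivariant group that is compatible, via the localization map, with the standard $W$-action on $\FF(H^\bullet_\bT(\pt))$.

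The next, and most subtle, step is to make the $W$-action geometric. The semi-infinite orbit $S_0$ is defined using a fixed opposite Borel, so it is not itself $N(\bT)$-invariant: $W$ does not act on the ambient space. I would therefore pass to a presentation where $W$ is manifest --- either the Beilinson--Drinfeld Grassmannian, where nearby cycles furnish a $W$-action as one moves the reference point, or the Bezrukavnikov--Finkelberg--Mirkovi\'c description via Wakimoto sheaves. In either framework one expects a natural $W$-action on the equivariant lift, and the content of the conjecture becomes the statement that this geometric $W$-action corresponds, under localization, to the standard action on $\FF(H^\bullet_\bT(\pt))$.

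The main obstacle is the inherently abstract nature of the Satake $W$-action: its definition via Tannakian formalism (or via the Hard Lefschetz device of Vasserot \cite{Vasserot-2002}) offers no immediate handle at the level of equivariant classes. A realistic path to a general proof is to first establish the conjecture for minuscule and quasi-minuscule coweights, where the $W$-action on $V(\lambda)_0$ is explicit, and then to propagate to arbitrary $\lambda$ using the fusion structure on the affine Grassmannian: since MV cycles restrict well to convolution products, and equivariant multiplicities are multiplicative in an appropriate sense under such decompositions, a computation in small tensor factors should control the general case. For small coweights in type A, the paper bypasses these difficulties by transporting the whole question through \cite{Braverman-Gaitsgory-Vybornov} into Springer theory, where Joseph and Hotta supply the required equivariant-multiplicity identities; generalizing this route beyond type A is where I would expect the main new work to lie.
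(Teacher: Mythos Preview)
The statement is a \emph{conjecture}, and the paper does not prove it in general; your proposal is likewise not a proof but a research outline, and you say as much. The first step (lifting to equivariant classes and restating via localization) is only a reformulation. The substantive steps are the second and third, and there the plan has genuine gaps. Neither nearby cycles on the Beilinson--Drinfeld Grassmannian nor the Wakimoto picture is known to produce a $W$-action on $H^\bT_\top(\overline{\Gr^\lambda}\cap S_0)$ that agrees with the Geometric Satake action on $V(\lambda)_0$; you write ``one expects,'' but matching a geometrically constructed action to the Tannakian one is exactly the difficulty, and you do not propose a mechanism for it. The fusion strategy is also problematic: for minuscule $\lambda$ the zero-weight space is typically zero, so there is nothing to anchor on; and more seriously, the $W$-action on $V(\lambda)_0$ does not factor through the zero-weight spaces of tensor factors, so even granting good behavior of MV cycles and equivariant multiplicities under convolution, it is unclear how a computation in small factors would control the $W$-action on the big one.

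Your final paragraph accurately summarizes what the paper actually does: it proves only the special case $\bG=\SL_d$, $\lambda\le d\omega_1$ (Theorem~\ref{thm:main-theorem-in-text}) by transporting through \cite{Braverman-Gaitsgory-Vybornov} to big Spaltenstein varieties, identifying the zero-weight $W$-action there with the Springer action twisted by sign via an explicit convolution computation (Theorem~\ref{thm:bgv-weight-zero-springer-action}), checking that the two natural bijections between MV cycles and orbital varieties coincide (Theorem~\ref{thm:the-two-bijections-of-irreducible-components-agree}), and then invoking Joseph--Hotta (Theorem~\ref{thm:joseph-hotta-construction}). That route depends essentially on type-$A$ machinery (Schur--Weyl duality, the Lusztig embedding of the nilpotent cone into $\Gr$), and none of the ingredients you sketch for a general argument are currently available to replace it.
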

\noindent Our main theorem is the following.
\begin{Theorem}[Theorem \ref{thm:main-theorem-in-text} in the main text]
  Conjecture \ref{conj:main-conjecture} is true for $\bG = \SL_d$ and $\lambda \leq d \omega_1$.
\end{Theorem}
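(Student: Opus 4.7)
The plan is to reduce the statement to a question about Springer fibers and then invoke the Joseph--Hotta description of the Springer representation via equivariant multiplicities of orbital varieties. Concretely, I would proceed in three stages.

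First, specialize to $\bG = \SL_d$ and a coweight $\lambda \leq d\omega_1$ and invoke the Braverman--Gaitsgory--Vybornov (BGV) theorem to identify $\overline{\Gr^\lambda} \cap S_0$ with a Spaltenstein variety $\cB_e^\lambda$ sitting inside a Springer fiber $\cB_e$, where $e$ is a principal (regular) nilpotent and $\lambda$ records the Jordan-type constraints. Under this identification the irreducible components match bijectively, so the MV basis in $H_\top(\overline{\Gr^\lambda} \cap S_0)$ corresponds to the basis of components of $\cB_e^\lambda$. Crucially, BGV combined with Ginzburg's Lagrangian construction implies that this identification intertwines the Satake Weyl group action with the Springer $W$-action on $H_\top(\cB_e^\lambda)$; this compatibility is precisely the input I need in order to transport the conjecture to the Springer side.

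Second, I would translate the equivariant multiplicity $e^\bT_0(Z)$ on the affine Grassmannian side into a Joseph-style Goldie rank polynomial on the Springer side. The Spaltenstein components of $\cB_e^\lambda$ are in bijection with the orbital varieties in $\fg\fl_d$ of a fixed Jordan type, by sweeping under the Springer resolution (or equivalently via the moment map to $\cN$). Joseph's theorem attaches to each orbital variety $\cV$ a Goldie rank polynomial $p_\cV \in \CC[\ft^*]$, which, as Hotta showed, is exactly (up to a normalization independent of $\cV$) the equivariant multiplicity of $\cV$ at $0 \in \cN$, viewed through the natural $\bT$-action. Moreover, Hotta proved that the map $\cV \mapsto p_\cV$ realizes the Springer $W$-representation on $H_\top(\cB_e)$ inside the space of harmonic polynomials, so the assignment of equivariant multiplicity to a component is $W$-equivariant on the nose on the Springer side.

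Third, I need to compare the two equivariant multiplicities. The BGV isomorphism is not a priori equivariant for the two naturally given torus actions, so the main technical obstacle is to show that the pullback of $e^\bT_0$ (on the affine Grassmannian side) along the BGV identification agrees with Joseph's $p_\cV$ (up to a $W$-invariant scalar or, more generally, up to a common renormalization that cannot obstruct $W$-equivariance). The cleanest route is to fix compatible $\bT$-actions at the level of the BGV construction, then compute both equivariant multiplicities by localization at the fixed point $0$ using tangent cone data: on both sides one gets a ratio of weight-space characters of a common tangent space (coming from the isomorphism of local models around the fixed point).

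The main obstacle, as just described, is verifying this equivariance of the BGV identification with respect to an appropriate torus. Once that is in hand, the chain $[Z] \mapsto e^\bT_0(Z) = p_\cV$ composes the $W$-equivariant BGV isomorphism with the $W$-equivariant Joseph--Hotta map, yielding the desired $W$-equivariance of \eqref{eq:115}. The hypothesis $\lambda \leq d\omega_1$ is exactly the range in which BGV applies, so no additional case analysis beyond this is required.
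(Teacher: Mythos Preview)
Your high-level strategy---reduce to the Springer side via Braverman--Gaitsgory--Vybornov and then invoke Joseph--Hotta---is the same as the paper's. However, several of the steps you treat as citable black boxes are in fact the substance of the paper, and a couple of your assertions are incorrect.

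First, the nilpotent is not principal. For $\lambda \vdash d$ and the zero weight space, the relevant big Spaltenstein variety $\tfg^x_{1^d}$ is the Springer fiber over a nilpotent $x \in \OO_\lambda$ of Jordan type $\lambda$; a regular nilpotent would give a point.

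Second, and more seriously, the statement ``BGV combined with Ginzburg's Lagrangian construction implies that this identification intertwines the Satake Weyl group action with the Springer $W$-action'' is not available in the literature as you suggest. Ginzburg's construction concerns \emph{little} Spaltenstein varieties, and the relationship between the Geometric Satake Weyl action and the Springer action on $H_\top(\tfg^x_{1^d})$ is precisely what the paper establishes (Theorem~\ref{thm:bgv-weight-zero-springer-action}). This requires first giving a convolution description of the Braverman--Gaitsgory Chevalley generators on big Spaltenstein varieties (Theorem~\ref{thm:bg-chevalley-by-explicit-correspondences}) and then computing $T_a \equiv 1 - E_aF_a$ explicitly in equivariant Borel--Moore homology. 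The answer is Springer \emph{tensored with sign}, not Springer on the nose; this sign is then cancelled by the sign in the Joseph--Hotta statement for equivariant multiplicities (as opposed to Joseph polynomials). You conflate $p_\cV$ with $e^T_0(\cV)$ and thereby lose track of this.

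Third, your Stage~3 obstacle is real but the paper resolves it differently and more cleanly. Rather than trying to make the BGV comparison torus-equivariant, the paper uses Lusztig's open embedding $j : \cN_d \hookrightarrow \Gr$, $x \mapsto (1 - t^{-1}x)$, which is $\SL_d$-equivariant and hence $T$-equivariant, and restricts to an \emph{isomorphism of varieties} $j_\lambda : \overline{\OO_\lambda} \cap \fu \overset{\sim}{\to} \overline{\Gr^\lambda} \cap S_0$. Equivariant multiplicities at $0$ therefore transport verbatim. The remaining issue is that this gives a second bijection $j_\lambda$ between orbital varieties and MV cycles, a priori different from the BGV bijection $\beta_\lambda$; the paper devotes \S\ref{sec:MV-cycles-and-orbital-varieties} to proving $j_\lambda = \beta_\lambda$ (Theorem~\ref{thm:the-two-bijections-of-irreducible-components-agree}). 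This comparison, not torus-equivariance of BGV, is the actual content needed.
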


\subsection{Description of our work}

Our primary method for making contact with the MV basis is work \cite{Braverman-Gaitsgory-Vybornov} of Braverman, Vybornov, and Gaitsgory. Their work, in type A, realizes the MV basis via an action on the top cohomology of big Spaltenstein varieties (see \S \ref{sec:big-spaltenstein-varieties} for the terminology).
Their construction proceeds by realizing Schur-Weyl duality and symmetric $(\GL_n,\GL_n)$-duality geometrically.
This action on big Spaltenstein varieties was first considered in \cite{Braverman-Gaitsgory} by Braverman and Gaitsgory. We proceed by carefully analyzing this construction.

\subsubsection{Warmup with Ginzburg's construction}

The main result of the original work of Braverman and Gaitsgory  \cite{Braverman-Gaitsgory} is an explicit realization of Ginzburg's Lagrangian construction of $\sl_n$ representations on little Spaltenstein varieties (see \S \ref{sec:little-spaltenstein-varieties} for the terminology). Specifically, it differs from the action on big Spaltenstein varieties by a sheaf-theoretic Fourier transform. The Braverman-Gaitsgory construction gives a sheaf-theoretic interpretation of the Ginzburg action (which is a priori convolution-theoretic) on little Spaltenstein varieties.

We consider the case of small representations (for $\SL_d$, these are exactly those with highest weight $\leq d \omega_1$) and their zero weight spaces. In this case, the relevant little Spaltenstein varieties are exactly Springer fibers. We show that the action on the zero weight space is exactly given by the Springer action (Theorem \ref{thm:ginzburg-zero-weight-equals-springer}). We prove this by an explicit computation in equivariant Borel-Moore homology.

\subsubsection{The main work}

Our first task is to analyze the Braverman-Gaitsgory construction. Their construction is via sheaf theory and Schur-Weyl duality. We show that the construction can be realized by certain explicit symmetrization formulas, and we show (Theorem \ref{thm:bg-chevalley-by-explicit-correspondences}) that the Braverman-Gaitsgory action is realized via convolution by explicit correspondences. Our work described above provides a convolution-theoretic construction of the Braverman-Gaitsgory action on big Spaltenstein varieties (analogous to how Braverman and Gaitsgory provide a sheaf-theoretic construction of Ginzburg's convolution-theoretic action on little Spaltenstein varieties).

We then focus on the case of small representations and their zero weight spaces. In this case, the relevant big Spaltenstein varieties are exactly Springer components (in terms of reduced scheme structure, big and little coincide in this case). We show that the action on the zero weight space is exactly given by the Springer action tensored with the sign character (Theorem \ref{thm:bgv-weight-zero-springer-action}). We prove this by an explicit computation in equivariant Borel-Moore homology, which is exactly analogous to the computation for little Spaltenstein varieties.

Finally, to phrase our main result, we need to move from Springer components to the so-called orbital varieties. Joseph conjectured \cite{Joseph} and Hotta proved \cite{Hotta} that the Springer representation can be realized using orbital varieties and equivariant multiplicities. However, to use the Joseph-Hotta construction, we need to verify that two \emph{a priori} different ways of identifying MV cycles and orbital varieties agree (one is due to Braverman, Vybornov, and Gaitsgory, and the comes from a classical construction of Lusztig). We show this is true by carefully unwinding the work of  Braverman, Vybornov, and Gaitsgory and comparing it with the usual correspondence between Springer components and orbital varieties. This is the contents of \S \ref{sec:MV-cycles-and-orbital-varieties} and specifically Theorem \ref{thm:the-two-bijections-of-irreducible-components-agree}.

\subsubsection{A remark about existing work comparing Geometric Satake and Springer theory}

The relationship between weight-zero spaces of small representations and Springer theory has been studied \cite{Reeder}, and in particular the relationship between Springer theory and Geometric Satake has been studied by works of Achar, Henderson, and Riche \cite{Achar-Henderson,Achar-Henderson-Riche}. However, those works do not make any mention of how the Mirkovi\'c-Vilonen basis or of how equivariant multiplicities fit into their story.  Nonetheless, we suspect there should a way to see our results from their perspective. As they work with small representations in general, perhaps one can use their point of view to extend the main theorem to small representations in general.

Finally, we should mention Mautner's work \cite{Mautner}, which geometrically realizes Schur-Weyl duality on the affine Grassmannian. This is implicit in the Braverman-Gaitsgory-Vybornov construction.

\subsection{Acknowledgements}

I am grateful to Joel Kamnitzer, Allen Knutson, Ivan Mirkovi\'c, Alexei Oblomkov, and Alex Weekes for many enlightening conversations. I also thank Joel Kamnitzer, Eric Sommers, and Hiraku Nakajima for their comments on an earlier version of this manuscript.

\section{Notation and preliminaries}

Given vectors $v_1,\ldots,v_\ell$ is a vector space, we will write $\langle v_1, \ldots, v_\ell \rangle$ for their span.
Let $Z$ be a complex algebraic variety, and let $d \geq 0$ be an integer. We write $\Irr(Z)$ for the set of irreducible components of $Z$, and we write $\Irr_d(Z)$ for the set of $d$-dimensional irreducible components of $Z$.

We will write $H_\bullet(Z)$ to denote the Borel-Moore homology of $Z$ with complex coefficients. If $d = \dim Z$, then $H_{2d}(Z)$ is the top degree non-vanishing homology group, and it is canonical isomorphic to the formal span of $\Irr_d(Z)$. We will write $H_{\top}(Z) = H_{2d}(Z)$ in this situation.  
If we have an algebraic torus $A$ acting on $Z$, we will also consider the equivariant Borel-Moore homology $H^A_\bullet(Z)$. This equivariant setting will be discussed in more detail below.

Similarly, let us write $H^\bullet(Z)$ for the cohomology of $Z$ and $H^\bullet_c(Z)$ for the compactly supported cohomology. Recall that $H^\bullet_c(Z)$ is canonically the dual vector space of $H_\bullet(Z)$. Therefore, the highest degree non-vanishing cohomology $H^\top_c(Z)$ is identified with the space of complex-valued functions on the set of top-dimensional irreducible components of $Z$. In particular, $H^\top_c(Z)$ has a basis indexed by the top-dimensional irreducible components of $Z$. 

\subsection{Some type A representation theory}
\label{sec:rep-theory}

For any positive integer $k$, we will write  $[k] = \{1,\ldots,k\}$. We fix integers $n \geq 1$ and $d \geq 1$ throughout. Let:
\begin{align}
  \label{eq:1}
\cP_{n,d} = \left\{ (d_1,\ldots,d_n) \suchthat d_i \geq 0 \textand d_1 + \cdots + d_n = d \right\}
\end{align}
We call $\cP_{n,d}$ the set of ($n$-step) \terminology{compositions of $d$}. We will write $\cP_{n,d}^{++} \subseteq \cP_{n,d}$ for the set of $(d_1,\ldots,d_n) \in \cP_{n,d}$ such that $d_1 \geq \cdots \geq d_n$. So $\cP_{n,d}^{++}$ denotes the $n$-step partitions of $d$ (with some parts allowed to be size zero). Recall the usual notation $\lambda \vdash d$ that denotes that $\lambda$ is a partition of $d$ (with no restriction on the number of parts); observe that equivalently we can write $\lambda \in \cP_{d,d}^{++}$ using our notation. We will often consider the case $n=d$. Let us write $1^d = (1,\ldots,1) \in \cP^{++}_{d,d}$ for the $d$-step composition of $d$ consisting of all $1$'s.

\subsubsection{Symmetric groups} Let $S_d$ denote the set of permutations of the set $[d]$. Let $\lambda \vdash d$, and let $S(\lambda)$ denote the corresponding Specht module  of $S_d$. Over $\CC$, the set $\left\{S(\lambda) \suchthat \lambda \vdash d \right\}$ is an enumeration of the irreducible representations of $S_d$. 

For each $\dd \in \cP_{n,d}$, we will form the subgroup $S_{\dd} = S_{d_1} \times \cdots \times S_{d_n}$ consisting of all permutations of $S_d$ that preserve the discrete flag $\emptyset \subseteq [d_1] \subseteq [d_1 + d_2] \subseteq \cdots \subseteq [d_1+\cdots+d_n] = [d]$.

\subsubsection{General/special linear groups}

We will write $G=\GL_d(\CC) $ for the general linear group, and we will write $\fg_d$ for the Lie algebra of $G$. We will also consider the special linear group $\SL_d$. Let $v_1,\ldots,v_d$ denote the standard basis for $\GL(\CC^d)$. This defines a maximal torus of $G$. Let $T$ be corresponding maximal torus of $\SL_d$. We identify $S_d$ as the Weyl group of $G$ and as the Weyl group of $\SL_d$. The distinction between these Weyl groups is slight, but it will be clear from context which copy of $S_d$ is relevant.

For each $\dd \in \cP_{n,d}$, consider the partial flag $\emptyset \subseteq \langle v_1, \ldots v_{d_1}\rangle  \subseteq \langle v_1, \ldots v_{d_1 + d_2}\rangle \subseteq \cdots \subseteq \langle v_1, \ldots v_{d_1+\cdots+d_n}\rangle = \CC^d$. Let $P_{\dd}$ be the parabolic subgroup of $G$ consisting of all elements that preserve this flag. The Weyl group of the Levi factor of $P_{\dd}$ is naturally identified with $S_{\dd}$. Write $\fp_{\dd}$ for the Lie algebra of $P_{\dd}$, and write $\fu_{\dd}$ for the Lie algebra of the unipotent radical of $P_{\dd}$. Write $\fu^-_{\dd}$ for the image of $\fu_{\dd}$ under the Chevalley involution, so $\fg_d = \fp_{\dd} \oplus \fu^-_{\dd}$. Similarly, write $\fp^-_{\dd}$ for the image of $\fp_{\dd}$ under the Chevalley involution.

Notice that $P_{1^d}$ is the standard Borel subgroup of $G$ consisting of invertible upper triangular matrices. The Lie algebra $\fp_{1^d}$ consists of upper triangular matrices, $\fu_{1^d}$ are strictly upper triangular matrices, and 
$\fu^-_{1^d}$ are strictly lower triangular matrices. We will often write $B=P_{1^d}$, $\fb = \fp_{1^d}$, $\fu = \fu_{1^d}$, and $\fu^- = \fu^-_{1^d}$.

Write $\E = \CC^n$, and let $\theta_1,\ldots,\theta_n$ denote the standard basis of $\E$. For each $i \in [n]$, let $\E_i$ denote the span of $\theta_i$. Then we can write:
\begin{align}
  \label{eq:6}
 \E = \E_1 \oplus \cdots \oplus \E_n 
\end{align}
 Some constructions we consider will only depend on this decomposition into lines (and not the individual basis vectors), and this notation will help to indicate this.

For each $\dd = (d_1,\ldots,d_n) \in \cP_{n,d}$, let
\begin{align}
  \label{eq:2}
  \E^\dd = \E_1^{\otimes d_1} \otimes \cdots \otimes \E_n^{\otimes d_n}
\end{align}
which is a distinguished line in $\E^{\otimes d}$.
We will consider the general and special linear groups $G = \GL(\E) = \GL_n$ and $\SL(\E) = \SL_n$ along  with their Lie algebras $\gl_n$ and $\sl_n$. The decomposition \eqref{eq:6} determines a maximal torus $D$ of $G$.

\subsubsection{Some basic facts about Schur-Weyl duality and irreducible modules}

Consider the $d$-fold tensor product $\E^{\otimes d}$. We have a $S_d \times \GL(\E)$ action on $\E^{\otimes d}$. For us Schur-Weyl duality is the statement that, as a $S_d \times \GL(\E)$-module, $\E^{\otimes d}$ decomposes as
\begin{align}
  \label{eq:3}
  \E^{\otimes d}  = \bigoplus_{\lambda \in \cP^{++}_{n,d} } S(\lambda^t) \otimes V(\lambda) 
\end{align}
where $S(\lambda^t)$ is a Specht module, and $V(\lambda)$ is an irreducible $\GL(\E)$-module. The decomposition implies that we can construct $V(\lambda)$ as:
\begin{align}
  \label{eq:5}
  V(\lambda) = \Hom_{S_d}(S(\lambda^t), \E^{\otimes d})
\end{align}
Notice that this decomposition also holds for the $\SL(\E)$, and we can explicitly identify the highest weights for each irreducible representation $\GL(\E)$. Let $\lambda = (\lambda_1,\cdots,\lambda_n) \in \cP^{++}_{n,d} $, then we have
$V(\lambda) = V( (\lambda_1-\lambda_2) \omega_1 + \cdots + (\lambda_{n-1} -\lambda_n)\omega_{n-1} )$ as $\SL(\E)$-modules where $\{\omega_1,\ldots,\omega_{n-1}\}$ are the fundamental weights for $\SL(\E)$. We will abuse notation and also write $\lambda$ for the $\SL(\E)$-weight $(\lambda_1-\lambda_2) \omega_1 + \cdots + (\lambda_{n-1} -\lambda_n)\omega_{n-1}$.

\subsubsection{Weyl group action on weight zero spaces}
For each $a \in [n-1]$, define:
\begin{align}
  \label{eq:116}
  T_a = \exp(E_a) \exp(-F_a) \exp(E_a) \in \SL_d
\end{align}
where $E_a,F_a \in \sl_n$ are the $a$-th simple Chevalley generators.
These operators realize the simple reflections in the Weyl group $S_n = N(T)/T$. In any integrable $\sl_n$-module, the right hand side of \eqref{eq:48} acts the same way as a finite sum in the enveloping algebra $\cU(\sl_n)$ given by expanding the exponentials and truncating sufficiently high powers. The operators $T_a$ satisfy the braid relations, but do not square to the identity element. However, the operators $T_a$ do preserve the zero weight space of any integrable module and furthermore they do square to the identity as on the zero weight space. Therefore, we obtain explicit formulas for the Weyl group action on zero weight spaces.

\subsubsection{Small representations}

We will focus on the case of $n=d$. The dominant $\SL_d$ weights satisfying $\lambda \leq d \omega_1$ are exactly the dominant weights corresponding to $\lambda \vdash d$. Following a standard terminology, we call corresponding representations $V(\lambda)$ \terminology{small representations} of $\sl_d$. Looking at non-vanishing weight spaces, one immediately sees that
\begin{align}
  \label{eq:117}
  T_a \equiv 1 - E_a F_a - F_a E_a + \frac{1}{2} E_a F^2_a E_a
\end{align}
as operators on the zero weight space of small representations.

One can further simplify formula \eqref{eq:117} using $\sl_2$ relations. Notice that $E_a F_a$ and $F_a E_a$ act the same way on the weight zero space. So we can write \eqref{eq:117} as $ 1 - 2 E_a F_a  + \frac{1}{2} E_a F_a E_a F_a$. We can further write the inner $F_a E_a$ as $E_a F_a + 2$ because that operator is acting on a $-2$-weight space for the $a$-th root $\sl_2$. Finally noting that $F_a^2$ acts by zero on this zero weight space, we conclude that:
\begin{align}
  \label{eq:122}
 T_a \equiv 1 - E_a F_a 
\end{align}
The author thanks Joel Kamnitzer for alerting him to this simplification.

\subsection{Some type A geometry}

For each $\dd = (d_1, \ldots,d_n) \in \cP_{n,d}$, we can form the partial flag variety:
\begin{align}
  \label{eq:9}
  \cF_{\dd} = \left\{ F_\bullet = (F_1,\ldots,F_n) \suchthat 0 = F_0 \subseteq F_1 \subseteq \cdots \subseteq F_n = \CC^d \text{ where } \dim F_i/F_{i-1} = d_i \right\}
\end{align}
We define:
\begin{align}
  \label{eq:12}
  \cF_{n,d} = \bigsqcup_{\dd \in \cP_{n,d}} \cF_\dd
\end{align}
Notice that for $\cF_{1^d}$ is exactly the complete flag variety consisting of complete flags in $\CC^d$.
We identify
\begin{align}
  \label{eq:10}
  \cF_\dd = G/P_{\dd}
\end{align}
but note that there are $\dd \neq \dd'$ with $P_{\dd} = P_{\dd'}$, so $\cF_{\dd} \neq \cF_{\dd'}$.

\subsubsection{Little Spaltenstein varieties}
\label{sec:little-spaltenstein-varieties}
Let us write $\cN_d \subset \fg_d$ for the nilpotent cone. The $G$-orbits on $\cN_{d}$ are indexed by partitions of $d$. For each $\lambda \vdash d$, we write $\OO_\lambda$ for the corresponding nilpotent orbit and $\overline{\OO_\lambda}$ for its closure. 

We have the closed subvariety $\cN_{n,d} \subseteq \cN_d$ defined as:
\begin{align}
  \label{eq:11}
  \cN_{n,d} = \left\{ x \in \cN_d \suchthat x^n = 0 \right\}
\end{align}
Notice that $\cN_{n,d} = \overline{\OO_{(n,n,\ldots,n,r)}}$ where $d = k n + r$ for $0 \leq r \leq n-1$ and $n$ occurs $k$ times in $(n,n,\ldots,n,r)$.
Define:
\begin{align}
  \label{eq:7}
  \tcN_{n,d} = \left\{ (x,F_\bullet) \in \cN_{n,d} \times \cF_{n,d} \suchthat  x F_i \subseteq F_{i-1} \text{ for } i \in [n] \right\}
\end{align}
Corresponding to \eqref{eq:12}, we have a connected component decomposition:
\begin{align}
  \label{eq:13}
  \tcN_{n,d} = \bigsqcup_{\dd \in \cP_{n,d}} \tcN_{\dd}
\end{align}
For each $\dd \in \cP_{n,d}$, we have an identification:
\begin{align}
  \label{eq:14}
  T^* \cF_{\dd} = \tcN_{\dd}
\end{align}
Under \eqref{eq:10}, we also can identify
\begin{align}
  \label{eq:18}
  \tcN_{\dd} = G \times^{P_{\dd}} \fu_\dd
\end{align}
where the notation $G \times^{P_{\dd}} \fu_\dd$ denotes quotient of $G \times \fu_\dd$ under the diagonal action of $P_{\dd}$ (acting on the right for the $G$-factor).

For each $\dd \in \cP_{n,d}$, we have the natural map $\mu_{\dd} : \tcN_{\dd} \rightarrow \tcN_{n,d}$. The map $\mu_{\dd}$ is a resolution of singularities. Taking disjoint union, we obtain a map $\mu_{n,d} : \tcN_{n,d} \rightarrow \cN_{n,d}$.
It is known that $\mu_{n,d}$ is \emph{semi-small}.
Notice that $\cN_{d,d} = \cN_d$ and that the map
\begin{align}
  \label{eq:15}
  \mu_{1^d} : \tcN_{1^d} \rightarrow \cN_{d}
\end{align}
is usual Springer resolution.

Fix $\dd \in \cP_{n,d}$. For $x  \in \OO_\lambda \subseteq \cN_{n,d}$, we define $\tcN_{\dd}^x = \mu_{\dd}^{-1}(x)$. We call $\tcN_{\dd}^x$ a \terminology{little Spaltenstein fiber}. It is known that $\dim \tcN_{\dd}^x = \dim \cF_{\dd} - \dim \OO_\lambda$, and that $\tcN_{\dd}^x$ is equidimensional.

\subsubsection{Big Spaltenstein varieties}
\label{sec:big-spaltenstein-varieties}
For each $\dd \in \cP_{n,d}$, define:
\begin{align}
  \label{eq:16}
  \tfg_{n,d} = \left\{ (x,F_\bullet) \in \fg_{d} \times \cF_{n,d} \suchthat  x F_i \subseteq F_{i} \text{ for } i \in [n] \right\}
\end{align}
Notice the difference from \eqref{eq:7}. Corresponding to \eqref{eq:12}, we have:
\begin{align}
  \label{eq:17}
  \tfg_{n,d} = \bigsqcup_{\dd \in \cP_{n,d}} \tfg_{\dd}
\end{align}
For $\dd \in \cP_{n,d}$, we have
\begin{align}
  \label{eq:19}
  \tfg_{\dd} = G\times^{P_{\dd}} \fp_{\dd}
\end{align}
from which we see that $\tfg_{\dd}$ is connected and non-singular with $\dim \tfg_{\dd} = \dim \fg_d = d^2$.

For each $\dd \in \cP_{n,d}$, we have the natural map $p_{\dd} : \tfg_{\dd} \rightarrow \tfg_{d}$.  Taking disjoint union, we obtain a map $p_{n,d} : \tfg_{n,d} \rightarrow \fg_d $. It is known that $p_{n,d}$ is \emph{small}. Notice that the map
\begin{align}
  \label{eq:15}
  \mu_{1^d} : \tfg_{1^d} \rightarrow \fg_{d}
\end{align}
is usual Grothendieck-Springer alteration.

Fix $\dd \in \cP_{n,d}$. For $x  \in \OO_\lambda \subseteq \cN_{d} \subseteq \fg_{d}$, we define $\tfg_{\dd}^x = p_{\dd}^{-1}(x)$. We call $\tfg_{\dd}^x$ a \terminology{big Spaltenstein variety}. It is known that $\dim \tfg_{\dd}^x \leq \dim \cF_{\dd} - \frac{1}{2}\dim \OO_\lambda$. Unlike little Spaltenstein varieties, big Spaltenstein varieties need not be equidimensional.

Observe that for $x \in \cN_{d}$ that the natural inclusion $\tcN^x_{1^d} \hookrightarrow \tfg^x_{1^d}$ induces an isomorphism at the level reduced schemes; in particular, they are homeomorphic in the analytic topology. Because we are only concerned with topological notions, we will consider all the spaces above with their induced reduced scheme structure. In particular, for us $\tcN^x_{1^d} = \tfg^x_{1^d}$. In this case, little and big Spaltenstein varieties coincide, and we will call them \terminology{Springer fibers} following the usual terminology.

\subsubsection{Springer's Weyl group action}
Recall that there is an $S_d$-action on $H_\bullet(\cF_{1^d})$. There are two natural choices for such an action, which differ by tensoring with the sign representation. We fix the choice where $H_0(\cF_{1^d})$ is the trivial $S_d$ module. 

Fix $\lambda \vdash d$ and $x \in \OO_\lambda \subseteq \cN_{d}$. Let $d_\lambda = \dim \cF_{1^d} - \frac{1}{2} \dim \OO_\lambda$. Then we have that the Springer fiber $\tcN^x_{1^d}$ has dimension $d_\lambda$. 

We have a natural closed embedding $\tcN^x_{1^d} \hookrightarrow \cF_{1^d}$ that induces a map 
\begin{align}
  \label{eq:20}
  H_{2 d_{\lambda}}(\tcN^x_{1^d}) \hookrightarrow H_{\bullet}(\cF_{1^d})
\end{align}
that is an injection \cite[Theorem 6.5.2]{Chriss-Ginzburg} (in fact, the entire homology $H_{\bullet}(\tcN^x_{1^d})$ embeds into $H_{\bullet}(\cF_{1^d})$ in type A, but we will not need this harder fact). The subspace $H_{2 d_{\lambda}}(\tcN^x_{1^d})$ is preserved under the $S_d$ -action, and we have an isomorphism $H_{2 d_{\lambda}}(\tcN^x_{1^d}) \cong S(\lambda)$ of $S_d$-modules. As explained in {\it loc. cit.}, this is exactly the action coming from Springer theory. We will call this the \terminology{Springer action} on $H_{2 d_{\lambda}}(\tcN^x_{1^d})$.

\subsection{Convolution in equivariant Borel-Moore homology}

The main method that we will use to study representations is that of convolution in Borel-Moore homology as developed in \cite[Chapter 2]{Chriss-Ginzburg}. The method developed there does not explicitly involve equivariant Borel-Moore homology, but as usual equivariant Borel-Moore homology is constructed using finite-dimension approximations of the classifying space (see e.g. \cite{Lusztig-1988,Edidin-Graham}), so the constructions from the non-equivariant situation carry over.

We will not recall the full details of convolution construction but will recall briefly how to compute using convolution in $A$-equivariant Borel-Moore homology where $A$ be a linear algebraic torus, i.e. a finite product of $\GG_m$'s. In our applications, we will have $A = T \times \GG_m$.  Our primary computational tool will be the localization theorem. We note that we will only apply the localization theorem to algebraic cycles, so we will make use of Brion's \cite{Brion} formulas for localization in equivariant Chow groups, which upon applying cycle-class map, will give us the corresponding formulas in equivariant Borel-Moore homology.

\subsubsection{Euler classes and equivariant multiplicities}

Let $Z$ be a variety with an $A$-action so that $Z^A$ is finite and that all fixed points $z \in Z^A$ are non-degenerate, which is to say that the zero weight does not appear in the tangent space $T_zZ$. This non-degeneracy condition is immediately verified if we can $A$-equivariantly embed $Z$ into a non-singular variety with finitely many $A$-fixed points; this will be the case for all varieties we consider.

If $z \in Z^A$ is a non-singular point of of $Z$, then we will define the $A$-equivariant Euler class of $Z$ at $z$, denoted $\eu_z(Z)$, to be the product of the $A$-weights of the tangent space $T_z(Z)$ counted with multiplicity. A special case of this is an $A$-representation $M$ with $M^A = 0$. In this case, the Euler class $\eu_0(M)$ is simply the product of the weights of $M$ counted with multiplicity. In this case, we will drop the subscript $0$ and simply write $\eu(M)$ for the Euler class of $M$ at $0$.

If $z \in Z^A$ is a possibly singular point of $Z$, then there is a natural generalization of the Euler class called the \terminology{$A$-equivariant multiplicity of $z$ at $Z$} denoted $e_z^A(Z)$. We refer to \cite[\S 4.2]{Brion} for the precise definition. This was also considered earlier by Joseph \cite{Joseph} and Rossmann \cite{Rossmann}. The notion is closely related to the notion of multidegree in commutative algebra (see e.g. \cite[]{Miller-Sturmfels}); in particular, it is computable by the methods of computational commutative algebra. The relevant facts for us are the localization theorem (Theorem \ref{thm:brions-localization-thm} below), which in fact characterizes the equivariant multiplicities, and the fact that if $z \in Z^A$ is a non-singular point of $Z$ then $e_z^A(Z) = \frac{1}{\eu_z(Z)}$.

\subsubsection{Equivariant multiplicities and the localization theorem}
\label{sec:localization-theorem}
Let $S = H^\bullet_T(\pt)$ denote the $A$-equivariant cohomology of a point, and let $Q$ denote the fraction field of $S$. Let $Z$ be an $A$-variety as above with finitely many $A$-fixed points, all of which are non-degenerate.

Let $i: Z^A \hookrightarrow Z$ denote the closed embedding of the fixed points. We have a proper pushforward map
\begin{align}
  \label{eq:26}
   H^A_\bullet(Z^A) \rightarrow H^A_\bullet(Z)
\end{align}
that is an $S$-module map. Explicitly, $H^A_\bullet(Z^A)$ is a free $S$-module spanned by the equivariant fundamental classes $[z]$ of fixed points $z \in Z^A$. We can tensor up with $Q$ to obtain a map:
\begin{align}
  \label{eq:27}
   Q \otimes_S H^A_\bullet(Z^A) \rightarrow Q \otimes_S H^A_\bullet(Z)
\end{align}

Let $Y \subseteq Z$ be a $A$-invariant closed subvariety. Then we can consider the $A$-equivariant fundamental class $[Y] \in H^A_\bullet(Z)$. We can also consider $[Y] \in Q \otimes_S H^A_\bullet(Z)$. Then we have the following formula (see e.g. \cite[Corollary 4.2]{Brion}), which is the version of the localization theorem that is necessary for our purposes.

\begin{Theorem}
  \label{thm:brions-localization-thm}
  As classes in $Q \otimes_S H^A_\bullet(Z)$, we have:
  \begin{align}
    \label{eq:28}
  [Y] = \sum_{z \in Z^A} e_z(Y) \cdot [z]
  \end{align}
\end{Theorem}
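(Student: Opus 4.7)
The plan is to deduce the formula from the Atiyah--Bott--Borel localization isomorphism, combined with Brion's characterization of the equivariant multiplicity as the coefficient appearing in that expansion.

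First, I would establish the localization isomorphism: because $A$ is a torus and every point of $Z^A$ is non-degenerate, the pushforward
\begin{equation*}
i_\ast : Q \otimes_S H^A_\bullet(Z^A) \longrightarrow Q \otimes_S H^A_\bullet(Z)
\end{equation*}
is an isomorphism of $Q$-vector spaces. This is standard; one reduces via excision on an $A$-equivariant cover to the case of an open neighborhood of a single non-degenerate fixed point, where the statement amounts to inverting the equivariant Euler class of the tangent space after tensoring with $Q$. Since $Z^A$ is finite and every fixed point non-degenerate, $H^A_\bullet(Z^A) = \bigoplus_{z \in Z^A} S \cdot [z]$, so after tensoring with $Q$ every class in $Q \otimes_S H^A_\bullet(Z)$ admits a unique expansion in the basis $\{[z]\}_{z \in Z^A}$.

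Applying this to $[Y] \in Q \otimes_S H^A_\bullet(Z)$ yields a unique expansion
\begin{equation*}
[Y] = \sum_{z \in Z^A} c_z \cdot [z], \qquad c_z \in Q,
\end{equation*}
so it only remains to identify $c_z$ with $e_z(Y)$. This identification is essentially Brion's definition in \cite[\S 4.2]{Brion}: the equivariant multiplicity $e_z(Y)$ is, by construction, the coefficient of $[z]$ in the localization expansion of $[Y]$. To confirm consistency with the Euler-class formula recalled in the text, I would verify that when $z$ is a non-singular point of $Y$ the coefficient equals $1/\eu_z(Y)$; this is a local computation on a slice modeled by the tangent space $T_z Y$, using that the equivariant fundamental class of a linear $A$-representation having no zero weight localizes to $1/\eu$ at the origin. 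Well-definedness for singular $z$ (independence of the ambient smooth embedding, and the interpretation via multidegrees of the tangent cone) is handled in \textit{loc.\ cit.}

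The main obstacle is conceptual rather than technical: once Brion's framework is in place, the stated identity is essentially tautological, and the substantive content lies in showing that coefficient extraction yields a well-defined invariant agreeing with the Euler class in the smooth case. For our purposes Theorem \ref{thm:brions-localization-thm} is quoted as a black box from \cite{Brion}, so no additional work is required in this paper.
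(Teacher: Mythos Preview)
Your proposal is correct and matches the paper's treatment: the paper does not prove this theorem at all but simply cites it from \cite[Corollary 4.2]{Brion}, exactly as you note in your final paragraph. Your sketch of the underlying argument (localization isomorphism plus Brion's definition of equivariant multiplicity as the resulting coefficient) is accurate and goes somewhat beyond what the paper itself provides, but there is no discrepancy in approach.
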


\subsubsection{Computing convolution at fixed points}
Convolution involves three operations: smooth pullback, refined intersection, and proper pushforward. We will recall how these operations behave for fundamental classes of fixed points. Using \eqref{eq:28}, we will be able to calculate for more general $A$-invariant fundamental classes.

Proper pushforward is easy to understand: the pushforward of a fixed point is a fixed point. Smooth pullback is also fairly straightforward: the smooth pullback of a fixed point is the fundamental class of the fiber, which is non-singular by assumption. Furthermore, these operations commute with the action of $S$.

The most subtle part of convolution is the operation of refined intersection. Let $X$ be a non-singular $A$-variety; for simplicity, let us suppose $X$ has finitely many $A$-fixed points.  Let $Z_1,Z_2 \subseteq X$ be $A$-invariant closed subvarieties, and let $Z_1 \cap Z_2$ be their intersection. Let $m = \dim X$, and $i,j \in \ZZ$. Then we can consider the refined intersection pairing 
\begin{align}
  \label{eq:25}
\cap : H_i(Z_1) \otimes H_j(Z_2) \rightarrow H_{i+j-2m}(Z_1 \cap Z_2)
\end{align}
relative to the ambient space $X$. We can also consider this $A$-equivariantly and at fixed points. The result is the following commutative diagram:
\begin{equation}
  \label{eq:29}
  \begin{tikzcd}
   H^A_i(Z_1^A) \otimes H^A_j(Z_2^A) \arrow[r] \arrow[d,"\cap"] & H^A_i(Z_1) \otimes H^A_j(Z_2) \arrow[r] \arrow[d,"\cap"] & H_i(Z_1) \otimes H_j(Z_2) \arrow[d,"\cap"]  & \\
   H^A_{i+j-2m}(Z_1^A \cap Z_2^A)     \arrow[r] & H^A_{i+j-2m}(Z_1 \cap Z_2)     \arrow[r] &  H_{i+j-2m}(Z_1
   \cap Z_2)    
  \end{tikzcd}
\end{equation}
All the maps in the left square commute with the action of $H^\bullet_A(\pt)$, so to calculate $H^A_i(Z_1^A) \otimes H^A_j(Z_2^A) \overset{\cap}{\rightarrow} H^A_{i+j-2m}(Z_1^A \cap Z_2^A)$ it suffices to calculate the intersection
$[z_1] \cap [z_2]$ for $z_1 \in Z_1^A$ and $z_2 \in Z_2^A$. This intersection is given by the excess intersection formula (see e.g \cite[Proposition 2.6.47]{Chriss-Ginzburg} and \cite[Corollary 6.3]{Fulton}):
\begin{align}
  \label{eq:31}
  [z_1] \cap [z_2] =
  \begin{cases}
    \eu_z(X) \cdot [z] &\textif{ z = z_1 = z_2} \\
    0 &\textif{z_1 \neq z_2}
  \end{cases}
\end{align}

\section{Some geometric $\sl_n$-representations}

We will recall three geometric constructions of $\sl_n$-modules: Ginzburg's Lagrangian construction, Mirkovi\'c and Vilonen's Geometric Satake equivalence, and Braverman and Gaitsgory's construction via Schur-Weyl duality and Springer theory. All three constructions give rise to geometrically defined bases. Savage \cite{Savage} has shown that Ginzburg's basis agrees with the basis arising in the quiver variety construction of Nakajima \cite{Nakajima}. Braverman, Gaitsgory, and Vybornov have shown that the Braverman-Gaitsgory basis coincides with the Mirkovi\'c-Vilonen basis in type A.

\subsection{Ginzburg's construction of $\sl_n$-representations}

We recall Ginzburg's Lagrangian construction of $\sl_n$-representation (\cite{Ginzburg-1991} and \cite[Chapter 4]{Chriss-Ginzburg}). For any pair $\dd',\dd \in \cP_{n,d}$, we can form the ``Steinberg'' variety $\tcN_{\dd'} \times_{\cN_{n,d}} \tcN_{\dd}$, which is equal to the union of conormal bundles to the (diagonal) $G$-orbits on $\cF_{\dd'} \times \cF_{\dd}$. Let $Y_{\dd',\dd}$ denote the unique minimal (and hence closed) $G$-orbit on $\cF_{\dd'} \times \cF_{\dd}$, and let $Z_{\dd',\dd}$ denote the total space of the conormal bundle to $Y_{\dd',\dd}$ inside
$\cF_{\dd'} \times \cF_{\dd}$.

Similarly to \eqref{eq:10} and~\eqref{eq:18}, we have
\begin{align}
  \label{eq:30}
 Y_{\dd',\dd} = G/(P_{\dd'} \cap P_{\dd}) 
\end{align}
and:
\begin{align}
  \label{eq:32}
 Z_{\dd',\dd} = G \times^{P_{\dd'} \cap P_{\dd}} (\fu_{\dd'} + \fu_\dd)
\end{align}

For any $x \in \cN_{n,d}$, we have an action by convolution:
\begin{align}
  \label{eq:33}
[Z_{\dd',\dd}] \star - : H_\bullet(\tcN^x_{\dd}) \rightarrow H_{\bullet + 2\dim\tcN^x_{\dd'} - 2\dim\tcN^x_{\dd}}(\tcN^x_{\dd'})
\end{align}
Notice that $Z_{\dd',\dd}$ is invariant under $T \times \GG_m$, where the $\GG_m$ factor acts by scaling cotangent fibers. Therefore, we can consider $[Z_{\dd',\dd}] \in H^{T \times \GG_m}_\bullet(\tcN_{\dd'} \times_{\cN_{n,d}} \tcN_{\dd})$.
In the special case of $x = 0$, $\tcN^0_{\dd} = \cF_\dd$ and $\tcN^x_{\dd'} = \cF_{\dd'}$ are $T \times \GG_m$ invariant, so we can consider the convolution action $T \times \GG_m$-equivariantly
\begin{align}
  \label{eq:34}
[Z_{\dd',\dd}] \star - : H^{T \times \GG_m}_\bullet(\cF_{\dd}) \rightarrow H^{T \times \GG_m}_{\bullet + 2\dim\cF_{\dd'} - 2\dim\cF_{\dd}}(\cF_{\dd'})
\end{align}
which recovers \eqref{eq:33} by setting $T \times \GG_m$-equivariant parameters equal to $0$.

\subsubsection{Chevalley generators}
Given $\dd = (d_1,\ldots,d_n) \in \cP_{n,d}$ and $a \in [n-1]$, we can form
\begin{align}
  \label{eq:60}
  \e_a(\dd) =
  \begin{cases}
    (d_1,\ldots,d_{a-1},d_a+1,d_{a+1}-1,d_{a+2},\ldots,d_n) &\textif d_{a+1} \geq 1 \\
    \nabla &\text{ otherwise }
  \end{cases}
\end{align}
where $\nabla$ is the ``ghost'' composition.
Similarly, we form:
\begin{align}
  \f_a(\dd) =
  \begin{cases}
    (d_1,\ldots,d_{a-1},d_a-1,d_{a+1}+1,d_{a+2},\ldots,d_n) &\textif d_{a} \geq 1 \\
    \nabla &\text{ otherwise }
  \end{cases}
\end{align}

For each $a \in [n-1]$, define
\begin{align}
  \label{eq:35}
  E_a = \sum_{\dd \in \cP_{n,d} : \e_a \dd \neq \nabla} [Z_{\e_a \dd,\dd}] 
\end{align}
and
\begin{align}
  \label{eq:36}
  F_a = \sum_{\dd \in \cP_{n,d} : \e_a \dd \neq \nabla} (-1)^{\sgn_a(\dd)} \left[ T^*_{Y_{\dd^-_a,\dd}}  \left(\cF_{\dd^-_a}\times\cF_{\dd}\right)\right]
\end{align}
where for $\dd = (d_1,\ldots,d_k)$, $\sgn_a(\dd) = d_{a+1} - d_a + 1$.

\begin{Remark}
We refer to \cite[Remark 3.6]{Savage} for an explanation of the sign in \eqref{eq:36}. There are many other possible sign choices that would work. For example, Vasserot \cite{Vasserot-1993} picks signs in a way that correspond to orienting the fundamental class of cotangent bundles via their symplectic structure. This does not necessarily agree with the orientations coming from their complex structure, but it leads to nicer formulas.
\end{Remark}

For any $x \in \cN_{n,d}$, we consider $E_a$ and $F_a$ operating on $\bigoplus_{\dd \in \cP_{n,d}} H_\bullet(\tcN^x_\dd)$ by convolution; note that these operators do not preserve the grading. However, these operators do preserve the top-degree homology, and we have the following theorem of Ginzburg.

\begin{Theorem}[\cite{Ginzburg-1991,Chriss-Ginzburg}]
  \label{thm:ginzburg-lagrangian-construction}
Let $\lambda \in \cP^{++}_{n,d}$, and let $x \in \OO_{\lambda^t} \subseteq \cN_{n,d}$. The operators $\left\{E_a,F_a \right\}_{a \in [n-1]}$ define an $\sl_n$-action on 
$\bigoplus_{\dd \in \cP_{n,d}} H_\bullet(\tcN^x_\dd)$. As an $\sl_n$-module, $\bigoplus_{\dd \in \cP_{n,d}} H_\top(\tcN^x_\dd)$
is irreducible and is isomorphic to $V(\lambda^t)$.
\end{Theorem}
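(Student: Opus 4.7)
The plan is the standard convolution argument presented in Chriss-Ginzburg: first establish the $\sl_n$-action, then identify the module. By the general formalism of convolution in Borel-Moore homology, the fundamental classes in $H_\bullet(\tcN_{n,d}\times_{\cN_{n,d}}\tcN_{n,d})$ act on $\bigoplus_\dd H_\bullet(\tcN^x_\dd)$. Each summand $H_\bullet(\tcN^x_\dd)$ naturally carries the $\sl_n$-weight $(d_1-d_2,\ldots,d_{n-1}-d_n)$, and the operators $E_a\star -$ and $F_a\star -$ land in the summands for $\e_a\dd$ and $\f_a\dd$ respectively, matching the simple root shifts. Their degree shifts equal $2(\dim\cF_{\e_a\dd}-\dim\cF_\dd)$, which is precisely the difference in top degrees; using equidimensionality of little Spaltenstein fibers, the top-degree subspace $\bigoplus_\dd H_\top(\tcN^x_\dd)$ is preserved.

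I would verify the Chevalley-Serre relations by $(T\times\GG_m)$-equivariant localization. All the correspondences $Z_{\dd',\dd}$ are $(T\times\GG_m)$-stable, so Theorem~\ref{thm:brions-localization-thm} expands each fundamental class in terms of fixed-point classes with equivariant-multiplicity coefficients. Convolution at fixed points is then computed by the excess intersection formula~\eqref{eq:31}, reducing everything to rational-function identities at matched triples of fixed points in products $\cF_{\dd'}\times\cF_\dd\times\cF_{\dd''}$. The relation $[E_a,F_b]=\delta_{ab}H_a$ collapses after summing fixed-point contributions to the scalar $d_a-d_{a+1}$ on the $\dd$-weight space; the signs $(-1)^{\sgn_a(\dd)}$ inserted in~\eqref{eq:36} are engineered precisely to make this cancellation come out with the correct sign. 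The case $|a-b|\geq 2$ reduces to disjointness of supports, and the Serre relation at $|a-b|=1$ follows from a bounded local computation on three-step flag varieties.

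To identify the module with $V(\lambda^t)$, I would locate a highest weight vector. The weight of $H_\top(\tcN^x_\dd)$ equals $\lambda^t$ precisely when $\dd=\lambda^t$ as an ordered composition with trailing zeros. For $x\in\OO_{\lambda^t}$, the fiber $\tcN^x_{\lambda^t}$ admits a Jordan-adapted flag whose component class gives a candidate highest weight vector, and every $E_a$ annihilates it either because $\e_a\lambda^t=\nabla$ or because $\tcN^x_{\e_a\lambda^t}$ has strictly smaller dimension than expected. This produces a nonzero map $V(\lambda^t)\surjects\bigoplus_\dd H_\top(\tcN^x_\dd)$. To conclude, one invokes Spaltenstein's count: $\sum_\dd\dim H_\top(\tcN^x_\dd)$ equals the total number of top-dimensional irreducible components of $\mu_{n,d}^{-1}(x)$, which by the Kostka-number description of Springer multiplicities agrees with $\dim V(\lambda^t)$, forcing both irreducibility and the identification.

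The principal obstacle is the sign bookkeeping in the $[E_a,F_a]=H_a$ computation: the factor $(-1)^{\sgn_a(\dd)}$ must thread precisely through all the excess-intersection contributions, and any sign error cascades through the Serre relation at $|a-b|=1$. An alternative that this paper in fact exploits downstream is to reduce to the Braverman-Gaitsgory realization of the same action via Schur-Weyl duality on $\E^{\otimes d}$ (cf.~\eqref{eq:3}), where the $\sl_n$-structure is algebraically transparent; in that framework only the geometric incarnation of $E_a$ and $F_a$ must be matched to the algebraic one, entirely bypassing the direct verification of the Serre relations.
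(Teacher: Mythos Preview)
The paper does not supply a proof of this theorem; it is quoted as a known result with citation to \cite{Ginzburg-1991,Chriss-Ginzburg}. So there is nothing in the paper itself to compare your proposal against. That said, your outline is a faithful sketch of the architecture in Chriss--Ginzburg, Chapter~4: convolution furnishes the operators, one checks the $\sl_n$-relations, locates a highest weight vector in the $\dd=\lambda^t$ summand, and then matches dimensions by the Kostka-number/Spaltenstein count.

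One point of divergence worth flagging: in the cited sources the Chevalley--Serre relations are \emph{not} verified by equivariant localization. Chriss--Ginzburg argue directly with the geometry of the Steinberg variety, using clean-intersection formulas and explicit identifications of convolution products as cycles. The $(T\times\GG_m)$-localization route you describe is the technique this paper uses for its own later computations (\S\ref{sec:ginz-weight-zero-small} and the analogous calculation for big Spaltenstein varieties), and it would certainly work here too, but it is your method, not that of the reference being cited. Your closing remark about the Braverman--Gaitsgory alternative is reasonable, though the logical direction in \cite{Braverman-Gaitsgory} is the reverse: they establish the sheaf-theoretic action and then identify it with Ginzburg's convolution action, rather than using it as an independent proof of the $\sl_n$-relations.
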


\subsection{Geometric Satake and the Mirkovi\'c-Vilonen basis}
\newcommand{\Sat}{\mathrm{Sat}}
\newcommand{\Rep}{\mathrm{Rep}}
\newcommand{\Vect}{\mathrm{Vect}}

Write $\cO = \CC[[t]]$  and $\cK = \CC((t))$. Let $\bG$ be a complex reductive algebraic group with a fixed pair of opposite Borels $\bB$ and $\bB^-$. Let $\bU$ and $\bU^-$ be the unipotent radicals of the Borels, and let $\bT = \bB \cap \bB^-$ be the corresponding maximal torus.  We form the {\it affine Grassmannian} $\Gr  = \Gr_{\bG} = \bG(\cK)/\bG(\cO)$, which has the structure of an projective ind-scheme of ind-finite-type. 

For each coweight $\mu$ of the torus $\bT$, there is a corresponding element $\mu \in \Gr$. The $\bU(\cK)$-orbits on $\Gr$ are indexed by coweights and are precisely the sets $S_\mu = \bU(\cK) \mu$ for coweights $\mu$. The $\bG(\cO)$-orbits on $\Gr$ are indexed by dominant coweights $\lambda$ and are precisely the sets  $\Gr^\lambda = \bG(\cO) \lambda$ for dominant coweights $\lambda$.

\subsubsection{The Geometric Satake action and Mirkovi\'c-Vilonen cycles}

One considers the {\it Satake category} $\Sat$ consisting of $\bG(\cO)$-equivariant perverse sheaves (we will only consider sheaves with complex coefficients) on $\Gr_{\bG}$. The category $\Sat$ has a symmetric monoidal structure given by convolution of sheaves.

Let $\bG^\vee$ denote the Langlands dual group of $\bG$, and let $\Rep(\bG^\vee)$ denote the category of finite-dimensional $\bG^\vee$-representations.
The celebrated Geometric Satake equivalence is the following explicit description of the Satake category.
\begin{Theorem}[\cite{Ginzburg-1995,Mirkovic-Vilonen}]
  \label{thm:geometric-satake}
  There is an equivalence of symmetric monoidal categories
  \begin{align}
    \label{eq:104}
   \Sat \overset{\sim}{\rightarrow} \Rep(\bG^\vee) 
  \end{align}
  such that the forgetful functor $\Rep(\bG^\vee) \rightarrow \Vect$ corresponds to the hypercohomology functor $\HH^\bullet : \Sat \rightarrow \Vect$.
\end{Theorem}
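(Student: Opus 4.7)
The plan is to prove Theorem \ref{thm:geometric-satake} by the standard Tannakian approach. I would first equip $\Sat$ with a convolution monoidal structure via the convolution map $m : \bG(\cK) \times^{\bG(\cO)} \Gr \to \Gr$, which is ind-proper. Using that $\dim \Gr^\lambda = 2\langle\rho,\lambda\rangle$ for $\lambda$ dominant, I would verify that $m$ is semismall on the convolution of any two orbit closures, which forces $\cA_1 \star \cA_2 := m_*(\cA_1 \,\widetilde{\boxtimes}\, \cA_2)$ to be perverse whenever $\cA_1,\cA_2$ are. To upgrade to a \emph{symmetric} monoidal structure, I would pass to the Beilinson-Drinfeld Grassmannian $\Gr_{X^2}$ over a smooth curve $X$: its fiber off the diagonal is $\Gr \times \Gr$ and its fiber over the diagonal is $\Gr$, so nearby cycles at the diagonal produce a canonical isomorphism between the fusion product and convolution, and the $\ZZ/2$-symmetry of $X^2$ endows $\Sat$ with a commutativity constraint that is symmetric, not merely braided.

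The fiber functor is hypercohomology $\HH^\bullet$. To prove it is exact I would use the Mirkovi\'c-Vilonen semi-infinite cell decomposition $\Gr = \bigsqcup_\mu S_\mu$: for $\cA \in \Sat$ one obtains $\HH^\bullet(\cA) \cong \bigoplus_\mu \HH^{\bullet}_c(S_\mu, \cA|_{S_\mu})$, and a weight/parity-vanishing argument shows each summand is concentrated in degree $2\langle\rho,\mu\rangle$. Monoidality of $\HH^\bullet$ is essentially automatic from properness of $m$ and the K\"unneth formula. Tannakian reconstruction then produces an affine group scheme $\tilde{\bG}$ with $\Sat \simeq \Rep(\tilde{\bG})$, and the canonical $\mu$-grading on $\HH^\bullet$ singles out a maximal torus $\tilde{\bT} \subset \tilde{\bG}$ whose character lattice is naturally the cocharacter lattice of $\bT$.

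The main obstacle is identifying $\tilde{\bG}$ with the Langlands dual group $\bG^\vee$ as a pinned reductive group. I would first show $\tilde{\bG}$ is reductive by invoking the decomposition theorem to verify that $\Sat$ is semisimple with simple objects $\IC_\lambda$ indexed by dominant coweights $\lambda$, and then check that $\dim \HH^{2\langle\rho,\mu\rangle}_c(S_\mu \cap \ol{\Gr^\lambda})$ computes the correct weight multiplicity of the $\mu$-weight space of $V(\lambda)$ via the MV cycle count. To pin down the root datum, I would reduce to semisimple rank one by restricting to Levi subgroups via Braden's hyperbolic localization and compatibility of the fiber functor with Levi restriction, and then compute $\Sat$ directly for $\SL_2$ and $\mathrm{PGL}_2$ to extract the coroots. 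Matching these against the Langlands dual root datum, together with a careful tracking of the canonical pinning induced by the convolution and nearby-cycles data, completes the identification. This last step is the deepest and most delicate part, since it requires simultaneously controlling the root data, the Weyl group action, and the pinning in a canonical, manifestly functorial way.
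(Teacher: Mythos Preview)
The paper does not prove Theorem~\ref{thm:geometric-satake}; it is stated with a citation to \cite{Ginzburg-1995,Mirkovic-Vilonen} and used as a black box. There is no proof in the paper to compare your proposal against.

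Your outline is a faithful sketch of the Mirkovi\'c--Vilonen argument, so in that sense it matches the cited reference. But for the purposes of this paper, no proof is expected: the theorem is background material, and all of the paper's original work takes Geometric Satake as given. If you are writing up this paper, you should simply cite the result rather than attempt to reprove it.
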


The Mirkovi\'c-Vilonen proof of Geometric Satake provides finer information by geometrically realizing the decomposition of $\bG^\vee$-representations into weight spaces. Let $\lambda$ be a dominant coweight and let $\mu$ be any coweight. The relevant fact for us is that there is a canonical isomorphism:
\begin{align}
  \label{eq:106}
   V(\lambda)_\mu = H^{2 \cdot \height(\lambda-\mu)}_{c}(\overline{\Gr^\lambda} \cap S_\mu)
\end{align}
Here $\height(\lambda-\mu)$ denotes the height of the coweight $\lambda-\mu$, and $H^{2 \cdot\height(\lambda-\mu)}_{c}$ denotes compactly supported cohomology. Furthermore, Mirkovi\'c and Vilonen prove that $\overline{\Gr^\lambda} \cap S_\mu$ is equidimensional of dimension $\height(\lambda-\mu)$. The elements of $\Irr(\overline{\Gr^\lambda} \cap S_\mu)$ are called {\it Mirkovi\'c-Vilonen (MV) cycles (of weight $(\lambda,\mu)$)}. We see therefore that the weight space $V(\lambda)_\mu$ has a basis (the {\it Mirkovi\'c-Vilonen (MV) basis}) indexed by the set of MV cycles of weight $(\lambda,\mu)$.

\subsubsection{Specifics in type A}

We will be interested in the case of $\bG = \GL_n$ and $\bG = \SL_n$. The group $\GL_n$ is its own Langlands dual group, and there is a natural bijection between weights and coweights: both sets are naturally in bijection with $\ZZ^n$. For $\SL_n$, the Langlands dual group is $\mathrm{PGL}_n$, and there are more weights than coweights. We can identify the set of coweights with a subset of weights: the coweights are identified precisely with the root lattice. We will use these identifications freely.

The affine Grassmannian $\Gr_{\GL_n}$ is a disjoint union of connected components $\Gr^d_{\GL_n}$.
A partition $\lambda \in \cP^{++}_{n,d}$ gives rise to a dominant coweight of $\lambda$ of $\GL_n$. The corresponding point $\lambda$ of the affine Grassmannian lies in $\Gr^d_{\GL_n}$.
Multiplication by the scalar matrix $t$ induces isomorphisms $\Gr^d_{\GL_n} \overset{\sim}{\rightarrow} \Gr^{d+n}_{\GL_n}$. The map $t$ induces an auto-equivalence of the Satake category. Under Geometric Satake, this auto-equivalence corresponds precisely to tensoring with the determinant character of the Langlands dual group. Because the determinant is trivial on $\SL_n$ (viewed as a subgroup of the Langlands dual group), the auto-equivalence commutes with the $\SL_n$-action coming from Geometric Satake. It also commutes with the Mirkovi\'c-Vilonen construction of weight spaces for the maximal torus of $\SL_n$ (but not for $\GL_n$).

The affine Grassmannian $\Gr_{\SL_n}$ is connected, and the inclusion $\SL_n \hookrightarrow \GL_n$ induces a homeomorphism between $\Gr_{\SL_n}$ and $\Gr^0_{\GL_n}$. So we can realize the Satake category for $\SL_n$ as a subcategory of the $\GL_n$ Satake category.

We will be primarily interested in the case when $n=d$, i.e. we consider partitions $\lambda \in \cP^{++}_{d,d}$. In this case, we will shift the emphasis of our notation slightly and write $\SL_d$ instead of $\SL_n$ (of course $n=d$ in this case). Recall that we can consider partitions in $\cP^{++}_{d,d}$ as weights for $\SL_d$. In this case the weights lie in the root lattice and are also coweights. The weights that appear are precisely the weights that are less than or equal to $d \omega_1$ in the dominance order. However, by our above discussion, such partitions naturally correspond to points in the $d$-th connected component $\Gr^d_{\GL_d}$. Multiplication by $t$ identifies $\Gr^0_{\GL_d}$ with $\Gr^d_{\GL_d}$. So although we will primarily interested in the representations and MV basis arising in the $\SL_d$ Satake category, we will sometimes equivalently work in $\Gr^0_{\GL_d}$ and $\Gr^d_{\GL_d}$.

\subsection{The Braverman-Gaitsgory action}

Braverman, Gaitsgory, and Vybornov \cite{Braverman-Gaitsgory-Vybornov} have shown that the Mirkovi\'c-Vilonen basis in type A can be realized via an action of $\sl_n$ on the cohomology of big Spaltenstein varieties. This action was first constructed by Braverman and Gaitsgory in \cite{Braverman-Gaitsgory}.

\subsubsection{The construction}

Let $\E$ be as in section \ref{sec:rep-theory}, i.e. $\E$ is an $n$-dimensional vector space equipped with a decomposition $\E = \E_1 \oplus \cdots \oplus \E_n$ into lines. Recall that this decomposition specifies a maximal torus $D$ in $\GL(\E)$.

Recall the map $p_{n,d} : \tfg_{n,d} \rightarrow \fg_d$. Following Braverman and Gaitsgory \cite{Braverman-Gaitsgory}, one defines a sheaf $\cK^{n,d}_\E$ on $\tfg_{n,d}$ by declaring:
\begin{align}
 \label{eq:37}
\cK^{n,d}_\E \Big\vert_{\tfg_{\dd}} = \E^\dd[d^2]
\end{align}
Note that $\cK^{n,d}_\E$ carries an action of $D$ by construction.
The degree shift ensures that $\cK^{n,d}_\E$ is a perverse sheaf.
We also consider the usual Grothendieck-Springer alteration $p_{1^d} : \tfg_{1^d} \rightarrow \tfg_d$, and we define the 
Grothendieck sheaf:
\begin{align}
  \label{eq:38}
  \Gro = (p_{1^d})_* \CC_{\tfg_{1^d}}[d^2]
\end{align}
The Grothendieck sheaf carries an action of $S_d$, so it makes sense to form $ (\Gro \otimes \E^{\otimes d})^{S_d}$, which by construction will carry a $\GL(\E)$-action. Braverman and Gaitsgory prove the following.
\begin{Theorem}[{\cite[\S 2.6]{Braverman-Gaitsgory}}]
  \label{thm:brav-gaits-t-equiv-isom}
  There is a natural $D$-equivariant isomorphism:
  \begin{align}
    \label{eq:40}
   (\Gro \otimes \E^{\otimes d})^{S_d} \overset{\sim}{\rightarrow}  (p_{n,d})_*\cK^{n,d}_\E
  \end{align}
\end{Theorem}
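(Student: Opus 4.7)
The plan is to prove \eqref{eq:40} by combining the projection formula, a Schur--Weyl-type decomposition of $\E^{\otimes d}$, and the factorization of the Grothendieck--Springer alteration through each $\tfg_\dd$. The starting point is the projection formula
\[
\Gro \otimes \E^{\otimes d} \;=\; (p_{1^d})_*\bigl(\CC_{\tfg_{1^d}} \otimes (p_{1^d})^*\E^{\otimes d}\bigr)[d^2],
\]
which realizes both sides of \eqref{eq:40} as pushforwards of explicit sheaves from spaces above $\fg_d$, with $S_d$ acting on $\Gro$ by its standard Weyl-group monodromy.

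First I would decompose $\E^{\otimes d}$ as a $D \times S_d$-module along its $D$-weights: the weight space indexed by $\dd \in \cP_{n,d}$ is spanned by the $\binom{d}{d_1, \ldots, d_n}$ lines obtained by permuting the factors of $\E^{\dd}$, and $S_d$ acts by permutation with point-stabilizer $S_\dd$. Hence
\[
\E^{\otimes d} \;=\; \bigoplus_{\dd \in \cP_{n,d}} \mathrm{Ind}^{S_d}_{S_\dd} \E^{\dd},
\]
where $\E^{\dd}$ is a $1$-dimensional $S_\dd$-representation on which $S_\dd$ acts trivially. Frobenius reciprocity then yields
\[
(\Gro \otimes \E^{\otimes d})^{S_d} \;=\; \bigoplus_{\dd} \bigl(\Gro \otimes \mathrm{Ind}^{S_d}_{S_\dd} \E^{\dd}\bigr)^{S_d} \;=\; \bigoplus_{\dd} \Gro^{S_\dd} \otimes \E^{\dd}.
\]
Granting the key identification $\Gro^{S_\dd} = (p_\dd)_*\CC_{\tfg_\dd}[d^2]$, one more application of the projection formula gives $\Gro^{S_\dd} \otimes \E^{\dd} = (p_\dd)_*\E^{\dd}[d^2]$, and summing over $\dd$ recovers $(p_{n,d})_*\cK^{n,d}_\E$; the $D$-equivariance is manifest since $D$ acts on each summand through its weight on $\E^{\dd}$.

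To identify $\Gro^{S_\dd}$ with $(p_\dd)_*\CC_{\tfg_\dd}[d^2]$ I would use the factorization $p_{1^d} = p_\dd \circ q_\dd$, where $q_\dd \colon \tfg_{1^d} \to \tfg_\dd$ is the natural coarsening map $(x, F^\bullet) \mapsto (x, F^\bullet_\dd)$. Over the regular semisimple locus $\fg_d^{\mathrm{rs}}$, the map $p_{1^d}$ is an \'etale $S_d$-Galois cover (corresponding to orderings of eigenvalues) and $q_\dd$ realizes $p_\dd^{-1}(\fg_d^{\mathrm{rs}})$ as the $S_\dd$-quotient, so $(p_\dd)_*\CC|_{\fg_d^{\mathrm{rs}}} \cong ((p_{1^d})_*\CC|_{\fg_d^{\mathrm{rs}}})^{S_\dd}$. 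Since $p_{n,d}$ is small, so is each $p_\dd$, and both $(p_\dd)_*\CC[d^2]$ and $\Gro^{S_\dd}$ are IC extensions of local systems on $\fg_d^{\mathrm{rs}}$, so agreement on this dense open set forces global agreement. The main technical obstacle is producing a \emph{natural} map rather than merely an abstract isomorphism: one must check that the $S_d$-monodromy action on $\Gro$ is compatible, via $q_\dd$, with the $S_d$-action on $\E^{\otimes d}$ by permutations of tensor factors, so that the chain of identifications above is functorial in the appropriate sense. Once this natural map is constructed over $\fg_d^{\mathrm{rs}}$, smallness of $p_{1^d}$ and $p_{n,d}$ guarantees a unique perverse extension, yielding the desired global $D$-equivariant isomorphism.
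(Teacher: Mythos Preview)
Your approach is correct and essentially the same as the paper's own explication of this (cited) result in \S 5.2.1. Both reduce to the decomposition $\bigoplus_{\dd} \Gro^{S_\dd} \otimes \E^\dd$, factor $p_{1^d}$ through $\tfg_\dd$, and use smallness plus IC extension from $\fg_d^{\mathrm{rs}}$ to promote an isomorphism on the regular semisimple locus to a global one.

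The one place where the paper is a bit more concrete is exactly the point you flag as the ``main technical obstacle'': producing a \emph{natural} map rather than an abstract identification. The paper builds the map globally, before restricting to invariants, via the adjunction counit. Since $\tfg_{1^d}$ and $\tfg_\dd$ are both smooth of dimension $d^2$, one has $\pi_{\dd,1^d}^!\,\E^\dd[d^2] = \E^\dd[d^2]$ on $\tfg_{1^d}$, and the counit of the $((\pi_{\dd,1^d})_!,\pi_{\dd,1^d}^!)$-adjunction (plus properness) gives a canonical map $(\pi_{\dd,1^d})_*\E^\dd[d^2] \to \E^\dd[d^2]$ on $\tfg_\dd$. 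Pushing forward by $p_\dd$ yields $\Gro \otimes \E^\dd \to (p_\dd)_*\E^\dd[d^2]$, which on $!$-stalks is literally proper pushforward in Borel--Moore homology from the fibers of $p_{1^d}$ to those of $p_\dd$. Restricting to $\Gro^{S_\dd}\otimes\E^\dd$ and summing gives the natural map whose inverse you would otherwise have to manufacture. This explicit fiberwise description is what the paper actually uses downstream (e.g.\ in analyzing the $\GL(\E)$-action on stalks), so if you want to interface with the rest of the argument it is worth recording.
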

One can therefore transport the $\GL(\E)$-action to $(p_{n,d})_*\cK^{n,d}_\E$. Let $\lambda \vdash d$, let $x \in \OO_\lambda \subset \cN_{d} \subset \fg_d$, and let $i_x : \{x\} \hookrightarrow \fg_d$ denote the inclusion of $x$. Using the basis of $\E$, we can identify the $!$-stalk $i_x^! (p_{n,d})_*\cK^{n,d}_\E$ with the Borel-Moore homology $\bigoplus_{\dd \in \cP_{n,d}} H_\bullet(\tfg^x_\dd)$. By \eqref{eq:40} we have a $\GL(\E)$-action on $\bigoplus_{\dd \in \cP_{n,d}} H_\bullet(\tfg^x_\dd)$. Because $\tfg_{n,d}$ is equidimensional, this action preserves homological degrees. In particular,
\begin{align}
  \label{eq:39}
 \bigoplus_{\dd \in \cP_{n,d}} H_{2 d_\lambda}(\tfg^x_\dd) 
\end{align}
caries a $\GL(\E)$-action. Because $\dim \tfg^x_\dd \leq d_\lambda$, $H_{2 d_\lambda}(\tfg^x_\dd)$ has a basis indexed by $\Irr_{d_\lambda}(\tfg^x_\dd)$.

Similarly because $p_{n,d}$ is a proper map, by taking $*$-stalks we have $\GL(\E)$-action on $\bigoplus_{\dd \in \cP_{n,d}} H^\bullet(\tfg^x_\dd)$. Again cohomological degree is preserved, and we have a $\GL(\E)$-action on:
\begin{align}
  \label{eq:41}
 \bigoplus_{\dd \in \cP_{n,d}} H^{2 d_\lambda}(\tfg^x_\dd) 
\end{align}
If $\lambda \in \cP^{++}_{n,d}$, this module is isomorphic to $V(\lambda)$. Otherwise, this module is $0$ (see e.g. \cite[Corollary 3.5(a)]{Borho-MacPherson}). 
Again, $H^{2 d_\lambda}(\tfg^x_\dd)$ has a basis indexed by $\Irr_{d_\lambda}(\tfg^x_\dd)$. Braverman, Vybornov, and Gaitsgory  have proved the following.

\begin{Theorem}{\cite{Braverman-Gaitsgory-Vybornov}}
  \label{thm:bgv-comparison-with-mv-basis}
 Under the $\GL(\E)$-isomorphism  
 \begin{align}
   \label{eq:42}
 V(\lambda) = \bigoplus_{\dd \in \cP_{n,d}} H^{2 d_\lambda}(\tfg^x_\dd) 
 \end{align}
 the basis $\bigsqcup_{\dd \in \cP_{n,d}} \Irr_{d_\lambda}(\tfg^x_\dd)$ coincides with the Mirkovi\'c-Vilonen basis.
\end{Theorem}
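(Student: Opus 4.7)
The plan is to interpolate between the two constructions of $V(\lambda)$ by using the Mirkovi\'c--Vybornov slice isomorphism as a geometric dictionary between the affine Grassmannian and the nilpotent cone, and then lifting that dictionary to the big Spaltenstein variety via the small map $p_{n,d}$. Mautner's geometric realization of Schur--Weyl duality will then supply the compatibility of the two $\GL(\E)$-actions involved.

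First I would recall the Mirkovi\'c--Vybornov isomorphism, which identifies $\overline{\Gr^\lambda}$ with a transverse slice to $\OO_{\lambda^t}$ in $\cN_{n,d}$ and which carries each stratum $\overline{\Gr^\lambda} \cap S_\mu$ onto a locally closed subvariety of that slice naturally associated to the composition $\dd$ corresponding to $\mu$. Since both sides are equidimensional of dimension $\height(\lambda-\mu)$ by the Mirkovi\'c--Vilonen theorem, the isomorphism carries MV cycles to irreducible components of the slice. Composing with the semismall (in fact small, on the relevant locus) map $p_{n,d}$ then upgrades this to a bijection between MV cycles of weight $(\lambda,\mu)$ and top-dimensional irreducible components of $\tfg^x_\dd$ for $x \in \OO_{\lambda^t}$.

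Next I would verify that this bijection of irreducible components is in fact the one induced by \eqref{eq:42} and the Satake isomorphism. For this I would invoke Mautner's construction, which realizes Schur--Weyl duality geometrically by producing a functor that compares the $\GL_n$-equivariant Satake category on $\Gr_{\GL_n}$ with $(S_d, \GL(\E))$-bimodules built from sheaves on $\tfg_{n,d}$. Under this functor, the isomorphism $(\Gro \otimes \E^{\otimes d})^{S_d} \simeq (p_{n,d})_* \cK^{n,d}_\E$ of Theorem \ref{thm:brav-gaits-t-equiv-isom} corresponds precisely to the Schur--Weyl decomposition of the Satake pushforward of $\IC_{\overline{\Gr^\lambda}}$, and the $\mu$-weight filtration by the orbits $S_\mu$ corresponds to the connected component decomposition of $\tfg_{n,d}$ indexed by $\dd$. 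Matching highest weights on both sides pins down the $\GL(\E)$-equivariant identification \eqref{eq:42} unambiguously.

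The main obstacle is this last compatibility: one must track the MV basis, defined intrinsically via fundamental classes of irreducible components in the affine Grassmannian, through both Mautner's equivalence and the Braverman--Gaitsgory isomorphism, and confirm that the resulting classes in $H^{2 d_\lambda}(\tfg^x_\dd)$ are again fundamental classes of irreducible components. This reduces to checking that Mautner's functor sends the $!$-extension of the constant sheaf on an MV cycle to the constant sheaf on the corresponding top component of the big Spaltenstein fiber, which is a support-and-dimension computation relying on the smallness of $p_{n,d}$ and the equidimensionality statements for $\overline{\Gr^\lambda} \cap S_\mu$. Once this support calculation is established, the theorem follows by tracing the two bases through the chain of natural identifications, since a $\GL(\E)$-isomorphism that sends a distinguished basis of fundamental classes on one side to a set of fundamental classes on the other must match the two intrinsic bases.
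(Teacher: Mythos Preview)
This theorem is not proved in the paper; it is quoted with attribution to \cite{Braverman-Gaitsgory-Vybornov} and used as input. There is therefore no proof in the paper to compare your outline against.

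That said, the paper's \S\ref{sec:MV-cycles-and-orbital-varieties} does rehearse the geometric core of the Braverman--Gaitsgory--Vybornov argument (in the special case $n=d$, $\mu=1^d$) for a different purpose, and the mechanism there is \emph{not} the Mirkovi\'c--Vybornov slice plus Mautner's functor. Rather, it is the correspondence space $\cP_{loc}$ of triples $(\cM,\cM',\alpha)$ with $\cM\in\Gr_E^{d,-}$, $\cM'\in\Gr_V^{d,+}$, and $\alpha:\cM/\cM_0\overset{\sim}{\to}\cM'_0/\cM'$. This space projects on one side to the Satake picture in $\Gr_V$ and on the other (after factoring through a convolution Grassmannian) to the Spaltenstein/Springer picture over $\cN_d$; both projections are smooth with connected fibers on the relevant loci, so irreducible components and stalks of perverse sheaves match through $\cP_{loc}$. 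That is the actual bridge in \cite{Braverman-Gaitsgory-Vybornov}.

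Your outline has a real gap at the sentence ``Composing with the semismall (in fact small, on the relevant locus) map $p_{n,d}$ then upgrades this to a bijection\ldots''. The Mirkovi\'c--Vybornov isomorphism deposits you in a transverse slice inside $\fg_d$, while $p_{n,d}$ is a map \emph{to} $\fg_d$ from $\tfg_{n,d}$; there is no composition available in the direction you need. To get from a slice in the base to top components of the fiber $\tfg^x_\dd$ you must explain why pulling back along $p_{n,d}$ matches irreducible components of $\overline{\Gr^\lambda}\cap S_\mu$ with $\Irr_{d_\lambda}(\tfg^x_\dd)$, and smallness of $p_{n,d}$ by itself does not give that. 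Your later appeal to Mautner's functor is asked to absorb exactly this missing step without any concrete statement of what the functor does on fundamental classes. The $\cP_{loc}$ roof is precisely the device that closes this gap in the original argument.
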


For our calculations, we will focus on the $\GL(\E)$-action \eqref{eq:39} on Borel-Moore homology. In particular, we will show that the action of Chevalley generators is given by convolution by explicit correspondences. The action on cohomology will then be obtained via a straightforward duality procedure (see Proposition \ref{prop:dual-chevalley-generators} below).

\section{Ginzburg's action and weight-zero spaces for small representations}
\label{sec:ginz-weight-zero-small}

We will consider the case of $n=d$ for the remainder of this section. Fix $\lambda \in \cP^{++}_{d,d}$, i.e., $\lambda$ is a partition of $d$. Let $x \in \OO_{\lambda^t} \subset \cN_{d}$. Recall Ginzburg's $\sl_n$-action on
\begin{align}
  \label{eq:47}
  \bigoplus_{\dd \in \cP_{d,d}} H_\top(\tcN^x_\dd)
\end{align}
that realizes the irreducible module $V(\lambda)$. Recall the composition $1^d = (1,1,\ldots,1) \in \cP_{d,d}$. The space $H_\top(\tcN^x_{1^d})$ is precisely the zero weight space of $V(\lambda)$ as an $\sl_n$-module (warning: this is not true for the usual $\gl_n$-module structure where the center acts non-trivially). Therefore, the Weyl group $S_d$ acts on $H_\top(\tcN^x_{1^d})$.

Notice however, that $H_\top(\tcN^x_{1^d})$ is exactly the top homology of a Springer fiber. Therefore, the Weyl group $S_d$ also acts $H_\top(\tcN^x_{1^d})$ via Springer theory. In this section we will compare these actions and show that they are equal.

\begin{Remark}
  It is well known that Springer's Weyl group action $H_\top(\tcN^x_{1^d})$ and the Weyl group action on the zero weight space of $V(\lambda)$ both realize the Specht module $S(\lambda^t)$ (e.g. \cite{Gutkin} and \cite{Kostant}). So by comparing the actions, we are essentially comparing the two natural bases. 
\end{Remark}

\subsection{Action on zero weight space}
\label{subsec:ginz-action-on-zero-weight-space}

For each $a \in [d-1]$, recall that 
\begin{align}
  \label{eq:49}
  T_a \equiv 1 - E_a F_a 
\end{align}
as operators on $H_\top(\tcN^x_{1^d})$. 

\subsubsection{Computing the action of $T_a$ via convolution}
\label{subsubsect-computing-T-a}

We need to compute the operator:
\begin{align}
  \label{eq:52}
E_a F_a : H_\top(\tcN^x_{1^d}) \rightarrow H_\top(\tcN^x_{1^d}) 
\end{align}
The operator
\begin{align}
  \label{eq:50}
  F_a : H_\top(\tcN^x_{1^d}) \rightarrow H_\top(\tcN^x_{\f_a 1^d})
\end{align}
is given by convolution by:
\begin{align}
  \label{eq:51}
 -[Z_{\f_a 1^d,1^d}] = \sum \sum_{x \in S_{d} / (S_{\f_a 1^d} \cap S_{1^d})} x \left(\frac{1}{ \eu_0(\fu^-_{\f_a 1^d}+ \fu^-_{1^d})\eu_0( (\fu_{\f_a 1^d} \cap \fu_{1^d}) \otimes \CC_h )} \right) [xS_{\f_a 1^d},xS_{1^d}] 
\end{align}
Similarly, 
\begin{align}
  \label{eq:53}
  E_a : H_\top(\tcN^x_{\f_a1^d}) \rightarrow H_\top(\tcN^x_{1^d})
\end{align}
is given by convolution by:
\begin{align}
  \label{eq:54}
 [Z_{1^d,\f_a1^d}] = \sum \sum_{x \in S_{d} / (S_{1^d} \cap S_{\f_a 1^d})} x \left(\frac{1}{ \eu_0(\fu^-_{1^d}+ \fu^-_{\f_a 1^d})\eu_0( (\fu_{1^d} \cap \fu_{\f_a 1^d}) \otimes \CC_h )} \right) [xS_{1^d},xS_{\f_a 1^d}] 
\end{align}

Consider the triple product
\begin{align}
  \label{eq:56}
  \tcN_{1^d} \times \tcN_{\f_a 1^d}\times \tcN_{1^d}
\end{align}
We have projection maps
\begin{align}
  \label{eq:57}
p_{12} : \tcN_{1^d} \times \tcN_{\f_a 1^d}\times \tcN_{1^d} \rightarrow  \tcN_{1^d} \times \tcN_{\f_a 1^d}   \\
  p_{23} : \tcN_{1^d} \times \tcN_{\f_a 1^d}\times \tcN_{1^d} \rightarrow  \tcN_{\f_a 1^d} \times \tcN_{1^d}   \\
  p_{13} : \tcN_{1^d} \times \tcN_{\f_a 1^d}\times \tcN_{1^d} \rightarrow  \tcN_{1^d} \times \tcN_{1^d}   
\end{align}
Let us compute the convolution:
\begin{align}
  \label{eq:55}
[Z_{1^d,\f_a1^d}] \star [Z_{\f_a 1^d,1^d}]  =  (p_{13})_* \left(p_{12}^*( [Z_{1^d,\f_a 1^d}]) \cap   p_{23}^*( [X_{\f_a 1^d,1^d}]) \right)
\end{align}

First compute
\begin{align}
  \label{eq:58}
  &p_{23}^*( [Z_{\f_a 1^d,1^d}]) = \sum_{w \in S_d} \sum_{x \in S_d/(S_{\f_a1^d} \cap S_{1^d})}  \\ & w\left(\frac{1}{\eu_{0}(\fu^-_{1^d}) \eu_0(\fu_{1^d} \otimes \CC_h)}\right) x \left(\frac{1}{ \eu_0(\fu^-_{\f_a 1^d}+ \fu^-_{1^d})\eu_0( (\fu_{\f_a 1^d} \cap \fu_{1^d}) \otimes \CC_h )} \right) [wS_{1^d},xS_{\e_a 1^d},xS_{1^d}] =\\
 & \sum_{w \in S_d} \sum_{x \in S_d}   w\left(\frac{1}{\eu_{0}(\fu^-_{1^d}) \eu_0(\fu_{1^d} \otimes \CC_h)}\right) x \left(\frac{1}{ \eu_0(\fu^-_{ 1^d})\eu_0( \fu_{\f_a 1^d} \otimes \CC_h )} \right) [wS_{1^d},xS_{\e_a 1^d},xS_{1^d}] 
\end{align}
as $  \fu^-_{ 1^d} = \fu^-_{\e_a 1^d}+ \fu^-_{1^d} $, $ \fu_{\f_a1^d} = \fu_{\f_a 1^d} \cap \fu_{1^d} $, and  $\# S_{\f_a 1^d} \cap S_{1^d} = 1$.
Similarly,
\begin{align}
  \label{eq:59}
& p_{12}^*( [Z_{1^d,\f_a 1^d}]) = \sum_{v \in S_d} \sum_{y \in S_d/(S_{\f_a1^d} \cap S_{1^d})} \\
&  
y \left(\frac{1}{ \eu_0(\fu^-_{\f_a 1^d}+ \fu^-_{1^d})\eu_0( (\fu_{\f_a 1^d} \cap \fu_{1^d}) \otimes \CC_h )} \right)
v\left(\frac{1}{\eu_{0}(\fu^-_{1^d}) \eu_0(\fu_{1^d} \otimes \CC_h)}\right)
[yS_{1^d},yS_{\e_a 1^d},vS_{1^d}] = \\
&  \sum_{v \in S_d} \sum_{y \in S_d}  
y \left(\frac{1}{ \eu_0(\fu^-_{1^d})\eu_0( \fu_{\f_a1^d} \otimes \CC_h )} \right)
v\left(\frac{1}{\eu_{0}(\fu^-_{1^d}) \eu_0(\fu_{1^d} \otimes \CC_h)}\right)
[yS_{1^d},yS_{\e_a 1^d},vS_{1^d}] 
\end{align}

When we compute the refined intersection $p_{12}^*( [Z_{1^d,\f_a 1^d}]) \cap  p_{23}^*( [X_{\f_a 1^d,1^d}])$, we need $w = y$, $x = v$, and $xS_{\f_a 1^d} = yS_{\f_a 1^d}$. Because $S_{\f_a 1^d} = \{1,s_a\}$, we must have $y = xs_a$ or $x = y$. So we have:
\begin{align}
  \label{eq:62}
&p_{12}^*( [Z_{1^d,\f_a 1^d}]) \cap   p_{23}^*( [Z_{\f_a 1^d,1^d}]) = \\   \sum_{x \in S_d} & 
x\left(\frac{\eu_{0}(\fu^-_{1^d}) \eu_0(\fu_{1^d} \otimes \CC_h)}{\eu_{0}(\fu^-_{1^d}) \eu_0(\fu_{1^d} \otimes \CC_h)}\right) x \left(\frac{\eu_0(\fu^-_{\f_a 1^d})\eu_0( \fu_{\f_a 1^d} \otimes \CC_h ) }{ \left( \eu_0(\fu^-_{ 1^d})\eu_0( \fu_{\f_a 1^d} \otimes \CC_h ) \right)^2} \right) \times \\ &x\left(\frac{\eu_{0}(\fu^-_{1^d}) \eu_0(\fu_{1^d} \otimes \CC_h)}{\eu_{0}(\fu^-_{1^d}) \eu_0(\fu_{1^d} \otimes \CC_h)}\right)
                                                                           [xS_{1^d},xS_{\f_a 1^d},xS_{1^d}]
  \\
& + xs_a\left(\frac{\eu_{0}(\fu^-_{1^d}) \eu_0(\fu_{1^d} \otimes \CC_h)}{\eu_{0}(\fu^-_{1^d}) \eu_0(\fu_{1^d} \otimes \CC_h)}\right) xs_a \left(\frac{\eu_0(\fu^-_{\f_a 1^d})\eu_0( \fu_{\f_a 1^d} \otimes \CC_h ) }{  \eu_0(\fu^-_{ 1^d})\eu_0( \fu_{\f_a 1^d} \otimes \CC_h ) } \right) \times
  \\ & x\left(\frac{\eu_{0}(\fu^-_{1^d}) \eu_0(\fu_{1^d} \otimes \CC_h)}{\eu_{0}(\fu^-_{1^d})^2 \eu_0(\fu_{\f_a 1^d} \otimes \CC_h)\eu_0(\fu_{1^d} \otimes \CC_h)}\right)
                                                                           [xs_aS_{1^d},xs_aS_{\f_a 1^d},xS_{1^d}]
\end{align}
Therefore:
\begin{align}
  \label{eq:63}
  & (p_{13})_* \left(p_{12}^*( [Z_{1^d,\f_a 1^d}]) \cap   p_{23}^*( [X_{\f_a 1^d,1^d}]) \right)  = \\
&  \sum_{x \in S_d} 
x \left( \frac{\eu_0(\fu^-_{\f_a 1^d}) }{\eu_0(\fu^-_{ 1^d})} \right)  x \left(\frac{}{  \eu_0(\fu^-_{ 1^d})\eu_0( \fu_{\f_a 1^d} \otimes \CC_h ) } \right) [xS_{1^d},xS_{1^d}]
  \\
& +  xs_a \left(\frac{\eu_0(\fu^-_{\f_a 1^d}) }{  \eu_0(\fu^-_{ 1^d}) } \right)
  x\left(\frac{ 1} {\eu_{0}(\fu^-_{1^d}) \eu_0(\fu_{\f_a 1^d} \otimes \CC_h)}\right)  [xs_aS_{1^d},xS_{1^d}] = \\
& \sum_{x \in S_d} x \left( \frac{\alpha_a+h}{-\alpha_a} \right) 
  x \left(\frac{1}{ \eu_0(\fu^-_{ 1^d})\eu_0( \fu_{1^d} \otimes \CC_h ) } \right) [xS_{1^d},xS_{1^d}] \\ & + \frac{x(\alpha_a+h)}{xs_a(-\alpha_a)} 
x \left(\frac{1}{ \eu_0(\fu^-_{ 1^d})\eu_0( \fu_{1^d} \otimes \CC_h ) } \right) [xs_aS_{1^d},xS_{1^d}]
\end{align}
For $u \in S_d$, we therefore compute:
\begin{align}
  \label{eq:64}
[Z_{1^d,\f_a1^d}] \star [Z_{\f_a 1^d,1^d}] \star [u] =  \\  
u \left( \frac{\alpha_a+h}{-\alpha_a} \right) [u] + 
u \left( \frac{\alpha_a+h}{\alpha_a} \right) [us_a] = \\ 
-[u] +[us_a] + \frac{h}{u(\alpha_a)} \left( [us_a] - [u] \right) = \\
(s_a - 1 + h \del_a) [u]
\end{align}
where $\del_a$ is just notation for the usual Bernstein-Gelfand-Gelfand operator.

Specializing $h=0$, we have:
\begin{align}
  \label{eq:65}
[Z_{1^d,\f_a1^d}] \star [Z_{\f_a 1^d,1^d}] \star [u] \vert_{h=0} = (s_a - 1) [u]      
\end{align}
Therefore we have computed $E_a F_a \equiv 1 - s_a$, and we can calculate the Weyl group action to be:
\begin{align}
  \label{eq:0327}
  T_a & \equiv s_a
\end{align}
We know that this action on $H_\bullet(\cF_{1^d})$ restricts to the Springer action on $H_\top(\tcN^x_{1^d})$, therefore we have proved the following.
\begin{Theorem}
  \label{thm:ginzburg-zero-weight-equals-springer}
 Let $\lambda \vdash d$, $x \in \OO_\lambda \subset \cN_d$. Then the Weyl group action on $H_\top(\tcN^x_{1^d})$ arising from the Ginzburg construction agrees with the Weyl group action coming from Springer theory.
\end{Theorem}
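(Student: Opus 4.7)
The goal is to show the two $S_d$-actions on $H_\top(\tcN^x_{1^d})$ coincide. Both are generated by simple reflections, so I would verify agreement one simple reflection $s_a$ at a time, for $a\in[d-1]$. By the small-representation simplification \eqref{eq:122} applied to the zero weight space, Ginzburg's Weyl generator satisfies $T_a \equiv 1 - E_a F_a$, so the task reduces to computing the composite $E_a F_a$ and showing it acts on $H_\top(\tcN^x_{1^d})$ as $1 - s_a$, where $s_a$ is the Springer-theoretic reflection.

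The strategy I would follow is to lift the computation from the Springer fiber to the ambient flag variety. The closed embedding $\tcN^x_{1^d}\hookrightarrow\cF_{1^d}$ induces, via \eqref{eq:20}, an $S_d$-equivariant injection on top Borel-Moore homology, and the Ginzburg operators are defined by global classes $[Z_{\dd',\dd}]$ on $\tcN_{n,d}$ that admit natural $T\times\GG_m$-equivariant lifts (with $\GG_m$ scaling cotangent fibers). Thus I would compute the full convolution $[Z_{1^d,\f_a 1^d}]\star[Z_{\f_a 1^d,1^d}]$ as a $T\times\GG_m$-equivariant endomorphism of $H^{T\times\GG_m}_\bullet(\cF_{1^d})$, with the understanding that specializing the equivariant parameters to zero and restricting along \eqref{eq:20} recovers the action of interest on the Springer fiber.

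The core computation uses equivariant localization (Theorem \ref{thm:brions-localization-thm}) on the partial flag varieties $\cF_{1^d}$ and $\cF_{\f_a 1^d}$, both non-singular with finite fixed-point sets indexed by $S_d$ and $S_d/S_{\f_a 1^d}$ respectively. Each class $[Z_{\dd',\dd}]$, realized as $G\times^{P_{\dd'}\cap P_\dd}(\fu_{\dd'}+\fu_\dd)$ via \eqref{eq:32}, can after passage to the fraction field be written as a symmetrization over $S_d/(S_{\dd'}\cap S_{\dd})$ with Euler-class coefficients read off from the decompositions of tangent spaces and their $\GG_m$-scalings. I would then perform the refined intersection on the triple product $\cF_{1^d}\times\cF_{\f_a 1^d}\times\cF_{1^d}$ pointwise using the excess-intersection formula \eqref{eq:31}; only configurations whose middle components coincide in $\cF_{\f_a 1^d}$ survive, and since $S_{\f_a 1^d}=\{1,s_a\}$ has order two, exactly two such configurations contribute for each $u\in S_d$, indexed by $u$ and $us_a$.

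I anticipate the collected contributions to assemble into an expression of the form $s_a - 1 + h\,\del_a$ on $H^{T\times\GG_m}_\bullet(\cF_{1^d})$, where $\del_a$ is the Bernstein-Gelfand-Gelfand divided-difference operator in the variables $\alpha_a$; setting $h=0$ then leaves $E_a F_a \equiv 1 - s_a$, hence $T_a \equiv s_a$, and restricting to the Springer image via \eqref{eq:20} yields the theorem. The main obstacle I expect is the Euler-class bookkeeping: correctly tracking the normalization factors that relate a symmetrization coefficient for $[Z_{\dd',\dd}]$ to the fixed-point basis of $H^{T\times\GG_m}_\bullet(\cF_\dd)$, together with the sign and the $\alpha_a+h$ versus $\pm\alpha_a$ factors, and verifying that all spurious off-diagonal terms cancel. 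The attractive feature is that the $h\to 0$ specialization cleanly eliminates the divided-difference correction, producing the exact Weyl group action and matching Springer's on the nose (with no sign twist).
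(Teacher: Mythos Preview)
Your proposal is correct and follows essentially the same route as the paper: reduce via \eqref{eq:122} to computing $E_aF_a$, evaluate the convolution $[Z_{1^d,\f_a1^d}]\star[Z_{\f_a1^d,1^d}]$ by $T\times\GG_m$-equivariant localization at fixed points, use that $S_{\f_a1^d}=\{1,s_a\}$ so only two terms survive the refined intersection, obtain $s_a-1+h\,\del_a$, specialize $h=0$, and conclude $T_a\equiv s_a$. The only cosmetic difference is that the paper sets up the triple product on the $\tcN$-side rather than the $\cF$-side, but since the fixed loci and Euler-class bookkeeping coincide this is immaterial.
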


\begin{Remark}
  Consider the usual Steinberg variety $\cZ = \tcN_{1^d} \times_{\cN_d}   \tcN_{1^d}$. The irreducible components of $\cZ$ are naturally indexed by the Weyl group $S_d$; for each $w \in S_d$, let $Z_w$ be the corresponding component. The above calculation shows that
  \begin{align}
    \label{eq:66}
[Z_{s_a}] = [Z_{1^d,\f_a1^d}] \star [Z_{\f_a 1^d,1^d}]    
  \end{align}
  as classes in $H_\bullet^{T \times \GG_m}(\cZ)$. Formula \eqref{eq:63} shows that the classes agree after tensoring with the fraction field of $H_\bullet^{T\times \GG_m} (\pt)$, but by \eqref{eq:64} we can conclude that they agree prior to tensoring because the action of $H_\bullet^{T \times \GG_m}(\cZ)$ on $H_\bullet^{T \times \GG_m}(\cF_{1^d})$ is known to be faithful.
\end{Remark}

\section{Further analysis of the Braverman-Gaitsgory action}

\subsection{Some preliminaries on Schur-Weyl duality}
\label{sec:schur-weyl-duality-the-action-of-chevalley-generators}

\subsubsection{Invariants vs. coinvariants}
\newcommand{\triv}{\mathrm{triv}}
\newcommand{\G}{\fG}

Let $\G$ be a group, and let $V$ be a $\G$-representation. Then we can form the $\G$-invariants $V^\G$, which comes via a canonical map $V^\G \hookrightarrow V$.
Similarly, we have the $\G$-coinvariants $V_\G$, which we can realize as  
the quotient of $V$ by the span of $\left\{gv - v \suchthat g \in \G \textand v \in V\right\}$.

We always have the canonical map $V^\G \rightarrow V_\G$ defined as the composition:
\begin{align}
  \label{eq:0302}
  V^\G \hookrightarrow V \twoheadrightarrow V_\G
\end{align}
For us $\G$ is a finite group, and we work over a field where $\# \G$ is invertible (namely $\CC$). In this case, the map $V^\G \rightarrow V_\G$ is an isomorphism, and we have explicit inverse $V_\G \rightarrow V^\G$ given by:
\begin{align}
  \label{eq:0303}
 [v] \mapsto  \frac{1}{\#\G}\sum_{g \in \G} gv
\end{align}
In the discussion below, the more natural object will sometimes be coinvariants, and we will use map \eqref{eq:0303} to canonically identification coinvariants with invariants,

\subsubsection{Schur-Weyl duality and taking invariants}

Let $V$ be an $S_d$-module. Consider the $S_d \times \GL(\E)$-module $V \otimes \E^{\otimes d}$.
We will explicitly describe a $T$-equivariant isomorphism:
\begin{align}
  \label{eq:0270}
  \bigoplus_{\dd \in \cP_{n,d}}  V^{S_\dd} \otimes \E^{\dd} = ( V \otimes \E^{\otimes d})^{S_d} 
\end{align}
To describe this map, we need to define for each $\dd \in \cP_{n,d}$ and injective map
\begin{align}
  \label{eq:61}
  V^{S_\dd} \otimes \E^{\dd} \hookrightarrow ( V \otimes \E^{\otimes d})^{S_d} 
\end{align}

Note that we have a natural injection $V^{S_\dd} \otimes \E^{\dd} \hookrightarrow V \otimes \E^{\dd} \hookrightarrow V \otimes \E^{\otimes d}$. But the image of this map does not lie in the $S_d$-invariants. Instead we will further map to the coinvariants $(V \otimes \E^{\otimes})_{S_d}$ and then map to the invariants $(V \otimes \E^{\otimes})^{S_d}$ via the canonical section \eqref{eq:0303}. Thus we have defined an explicit map realizing \eqref{eq:0270}.

\subsubsection{The action of Chevalley generators}
Observer that $\GL(\E)$ acts on the right hand side of \eqref{eq:0270}. Therefore, we can transport this action to the left hand side of \eqref{eq:0270}, and below we will write formulas for the action of Chevalley generators.

For each $i \in [n]$, choose a basis vector $\theta_i \in \E_i$. For each $\dd \in \cP_{n,d}$, let $\theta_\dd$ be the corresponding basis vector in $E^\dd$. We need to make this choice to define Chevalley generators. For each $a \in [n-1]$, we define operators $E_a$ and $F_a$ by 
\begin{align}
  \label{eq:283}
  E_a(\theta_{a+1}) = \theta_a \\
  E_a(\theta_{i}) = 0 \text{ otherwise } 
\end{align}
and: 
\begin{align}
  \label{eq:284}
  F_a(\theta_{a}) = \theta_{a+1} \\
  F_a(\theta_{i}) = 0 \text{ otherwise } 
\end{align}

Let $\dd \in \cP_{n,d}$, and consider the case when $\e_a(\dd) \neq \nabla$. The operator $E_a$ acts sending the $\dd$-weight space of $(V \otimes E^{\otimes d})^{S_d}$ to its $\e_a(\dd)$-weight space. Transporting structure via  \eqref{eq:0270}, we get an operator $E_a : V^{S_{\dd}} \otimes E^{\dd} \rightarrow V^{S_{\e^a\dd}} \otimes E^{\e^a\dd}$. Because we have trivialized the lines $E^{\dd}$ and $E^{\e^a \dd}$, we therefore get an operator $E_a : V^{S_{\dd}}  \rightarrow V^{S_{\e^a\dd}}$. Similarly, we can construct operators $F_a$. The following is an explicit description of these operators. 

\begin{Proposition}
  \label{prop:chevalley-gen-action-via-schur-weyl-duality}
Let $\dd = (d_1,\ldots,d_n) \in \cP_{n,d}$, and consider the case when $\e_a\dd \neq \nabla$.  The operator  $E_a : V^{S_{\dd}}  \rightarrow V^{S_{\e^a\dd}}$ is given by the symmetrization operator
  \begin{align}
    \label{eq:0287}
   d_{a+1} \cdot \Psi_{\e^a \dd, \dd} :  V^{S_{\dd}}  \rightarrow V^{S_{\e^a\dd}}
  \end{align}
  where $\Psi_{\e^a \dd, \dd}$ defined by:
  \begin{align}
    \label{eq:0288}
    \Psi_{\e^a \dd, \dd}( v) = \frac{1}{\# S_{\e_a \dd}} \sum_{w \in S_{\e_a \dd} } w(v)
  \end{align}
  In the case when $\f_a \dd \neq \nabla$, the operator  $F_a : V^{S_{\dd}}  \rightarrow V^{S_{\f^a\dd}}$ is given by $d_{a} \cdot \Psi_{\f^a \dd, \dd} $.
\end{Proposition}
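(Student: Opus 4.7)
The plan is to chase an element $v \in V^{S_\dd}$ explicitly through the isomorphism \eqref{eq:0270} in both directions, applying the Chevalley generator in between. Using the canonical section \eqref{eq:0303}, the image of $v \otimes \theta_\dd$ in $(V \otimes \E^{\otimes d})^{S_d}$ is the symmetrization $\Phi(v \otimes \theta_\dd) = \frac{1}{d!}\sum_{g \in S_d} g(v) \otimes g(\theta_\dd)$. Since $E_a$ acts on $\E^{\otimes d}$ as the sum $\sum_{i=1}^d E_a^{(i)}$ of factor-wise operators, it commutes with the $S_d$-action on $\E^{\otimes d}$, so pushing $E_a$ through the symmetrization reduces the computation to that of $E_a \theta_\dd$.

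A direct calculation shows that $E_a\theta_\dd$ is a sum of exactly $d_{a+1}$ tensor monomials, one for each position $j$ occupied by $\theta_{a+1}$ in $\theta_\dd$: the $j$-th summand $\theta_{\dd,j}$ is obtained by replacing that $\theta_{a+1}$ by $\theta_a$. Fixing the natural choice of $\theta_{\e_a\dd}$ (with its $\theta_a$-block placed at the first $d_a+1$ positions of the $(a,a+1)$-block), each $\theta_{\dd,j}$ can be written as $\sigma_j(\theta_{\e_a\dd})$, where $\sigma_j$ is a transposition that swaps two positions inside the $\theta_{a+1}$-block of $\theta_\dd$. To extract the $\e_a\dd$-component, I compute the coefficient of $\theta_{\e_a\dd}$ in both $E_a\Phi(v\otimes\theta_\dd)$ and in the putative image $\Phi(v' \otimes \theta_{\e_a\dd})$ for the sought $v' \in V^{S_{\e_a\dd}}$, and solve for $v'$. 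The latter equals $\frac{\#S_{\e_a\dd}}{d!}v'$ by $S_{\e_a\dd}$-invariance, while the former reduces to $\frac{1}{d!}\sum_j \sum_{h \in S_{\e_a\dd}} h\sigma_j(v)$ after identifying which $g$ send some $\theta_{\dd,j}$ to $\theta_{\e_a\dd}$ (namely $g\sigma_j \in S_{\e_a\dd}$).

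The crux of the argument is the observation that each $\sigma_j$ lies in $S_\dd$, since it permutes only positions within the $\theta_{a+1}$-block of $\theta_\dd$; because $v$ is $S_\dd$-invariant, $\sigma_j(v) = v$. This collapses the double sum into $d_{a+1}$ identical copies of $\sum_{h \in S_{\e_a\dd}} h(v)$, and solving yields precisely $v' = d_{a+1}\Psi_{\e_a\dd,\dd}(v)$. The formula for $F_a$ follows from the symmetric argument with the roles of $a$ and $a+1$ interchanged: the analogous transpositions now permute positions inside the $\theta_a$-block of $\dd$, again landing in $S_\dd$, which produces the multiplicity factor $d_a$ instead of $d_{a+1}$. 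I expect the only real obstacle to be careful bookkeeping of tensor positions, cosets, and stabilizers; the conceptual content of the proposition is carried entirely by the fact that the transpositions $\sigma_j$ stabilize $v$, without which the formula would involve a more complicated average rather than a single symmetrization.
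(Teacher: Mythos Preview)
Your proposal is correct and follows essentially the same route as the paper's proof: both push $v$ through the symmetrization, compute $E_a\theta_{\dd}$ as a sum of $d_{a+1}$ monomials of the form $\sigma_j(\theta_{\e_a\dd})$ with $\sigma_j$ the transposition $(d_1+\cdots+d_a+1,\ d_1+\cdots+d_a+j)$, and then use the key observation that each $\sigma_j \in S_{\dd}$ to collapse the sum. The only cosmetic difference is that the paper manipulates the symmetrized sum directly into the form $\frac{d_{a+1}}{\#S_d}\sum_{x\in S_d} x(v)\otimes x(\theta_{\e_a\dd})$ and then inserts an extra average over $S_{\e_a\dd}$, whereas you extract the coefficient of $\theta_{\e_a\dd}$ and solve; these are equivalent bookkeeping choices.
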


\begin{proof}
  We do the case of $E_a$. The case of $F_a$ is essentially the same.
  Let $v \in V^{S_\dd}$. Under \eqref{eq:0270},  this maps to the $S_d$-invariant:
  \begin{align}
    \label{eq:0289}
    \frac{1}{\# S_d} \sum_{x \in S_d} x(v) \otimes x(\theta_{\dd})
  \end{align}
  Writing out $\dd = (d_1,\ldots,d_n)$, we calculate that
  \begin{align}
    \label{eq:0290}
    E_{a}(\theta_{\dd}) = \sum_{j=1}^{d_{a+1}} (d_1+\cdots+d_a +1, d_1+\cdots+d_a +j) \theta_{\e_a \dd}
  \end{align}
  where $(d_1+\cdots+d_a +1, d_1+\cdots+d_a +j)$ is the transposition. So $E_av$ corresponds to:
  \begin{align}
    \label{eq:0291}
    \frac{1}{\# S_d} \sum_{x \in S_d} \sum_{j=1}^{d_{a+1}} x(v) \otimes x( (d_1+\cdots+d_a +1, d_1+\cdots+d_a +j)\theta_{\e_a\dd})
  \end{align}
  Note that for each $j$, the transposition $(d_1+\cdots+d_a +1, d_1+\cdots+d_a +j) \in S_{\dd}$. So we can rewrite as:
  \begin{align}
    \label{eq:0294}
   & \frac{1}{\# S_{d}}  \sum_{x \in S_d} \sum_{j=1}^{d_{a+1}} x((d_1+\cdots+d_a +1, d_1+\cdots+d_a +j)v) \otimes x( (d_1+\cdots+d_a +1, d_1+\cdots+d_a +j)\theta_{\e_a\dd}) \\
    &= \frac{d_{a+1}}{\# S_{d}}  \sum_{x \in S_d}  x(v) \otimes x( \theta_{\e_a\dd})
  \end{align}
  We can write:
  \begin{align}
    \label{eq:0296}
    \frac{d_{a+1}}{\# S_{d}}  \sum_{x \in S_d}  x(v) \otimes x( \theta_{\e_a\dd}) = 
    \frac{d_{a+1}}{\# S_{\e_a\dd}\# S_{d}}  \sum_{x \in S_d} \sum_{w \in S_{\e_a\dd}}  x(w(v)) \otimes x(w( \theta_{\e_a\dd})) 
  \end{align}
  Because $\theta_{\e_a\dd}$ is $S_{\e_a\dd}$-invariant, this is equal to:
  \begin{align}
    \label{eq:0297}
    \frac{d_{a+1}  }{\# S_{d}}  \sum_{x \in S_d}    x(\Psi_{\e_a\dd,\dd}(v)) \otimes x(\theta_{\e_a\dd})
  \end{align}
  Under \eqref{eq:0270}, this corresponds to  
  \begin{align}
    \label{eq:0298}
    d_{a+1} \cdot \Psi_{\e_a\dd,\dd}(v)
  \end{align}
  as an element of $V^{S_{\e_a \dd}}$.  
\end{proof}

\subsubsection{Dual construction}

Above we started with an $S_d$-module $V$ and produced a $\GL(\E)$ module. We can also consider the dual $S_d$-module $V^*$ and run the above procedure to obtain an 
$\GL(\E)$-module. As $S_d$-modules are self-dual, we know that the resulting module is the same as an abstract $\GL(\E)$-module. We will describe explicitly the Chevalley generators.

For each $\dd \in \cP^{++}_{n,d}$, the $\dd$-weight space is the invariant space $(V^*)^{S_\dd}$, which is canonically isomorphic to the dual of the coinvariant space $(V_{S_\dd})^*$. Using \eqref{eq:0303}, we can identify this with the dual of the invariant space $(V^{S_\dd})^*$. Therefore, we obtain Chevalley generators for each $a \in [n-1]$
\begin{align}
  \label{eq:118}
  E_a : (V^{S_\dd})^* \rightarrow (V^{S_{\e_a\dd}})^*
\end{align}
and
\begin{align}
  \label{eq:119}
  F_a : (V^{S_\dd})^* \rightarrow (V^{S_{\f_a\dd}})^*
\end{align}
under the appropriate assumptions that $\e_a \dd \neq \nabla$ and $\f_a \dd \neq \nabla$.
Unwinding Proposition \ref{prop:chevalley-gen-action-via-schur-weyl-duality} and using \eqref{eq:0303} to identify invariants and coinvariants, we obtain the following proposition.
\begin{Proposition}
  \label{prop:dual-chevalley-generators}
  Let $a \in [n-1]$ and $\dd = (d_1,\ldots,d_n) \in \cP^{++}_{n,d}$. In the case when $\e_a\dd \neq \nabla$, the Chevalley generator $E_a$ \eqref{eq:118} is the adjoint of the map:
  \begin{align}
    \label{eq:120}
   \frac{d_{a+1}}{d_{a}+1}F_a :  V^{S_{\e_a \dd}} \rightarrow V^{S_{\dd}}
  \end{align}
  When $\f_a \dd \neq \nabla$, the Chevalley generator $F_a$ \eqref{eq:119} is the adjoint of the map:
  \begin{align}
    \label{eq:121}
   \frac{d_a}{d_{a+1}+1}E_a :  V^{S_{\f_a \dd}} \rightarrow V^{S_{\dd}}
  \end{align}
\end{Proposition}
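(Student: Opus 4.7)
The plan is to apply Proposition~\ref{prop:chevalley-gen-action-via-schur-weyl-duality} directly to the $S_d$-module $V^*$ and then translate the resulting action on $(V^*)^{S_\dd}$ to an action on $(V^{S_\dd})^*$ by carefully tracking the canonical identification between them.

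First I would make the identification $(V^*)^{S_\dd} \cong (V^{S_\dd})^*$ explicit. Any $\phi \in (V^*)^{S_\dd}$ is $S_\dd$-invariant, and therefore satisfies $\phi(v) = \phi\bigl(\tfrac{1}{\# S_\dd}\sum_{g \in S_\dd} g v\bigr)$ for every $v \in V$; hence $\phi$ is uniquely determined by its restriction $\tilde\phi := \phi|_{V^{S_\dd}}$, and this restriction map is the identification. (It is simply the dualization of \eqref{eq:0303}.) The inverse is $\tilde\phi \mapsto \tilde\phi \circ (\tfrac{1}{\# S_\dd}\sum_{g \in S_\dd} g)$.

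Next, Proposition~\ref{prop:chevalley-gen-action-via-schur-weyl-duality} applied to the $S_d$-module $V^*$ says that $E_a\colon (V^*)^{S_\dd} \to (V^*)^{S_{\e_a\dd}}$ is $d_{a+1}\cdot \Psi^{V^*}_{\e_a\dd,\dd}$, which on test vectors reads $(E_a\phi)(v) = \tfrac{d_{a+1}}{\# S_{\e_a\dd}}\sum_{w \in S_{\e_a\dd}} \phi(w^{-1} v)$. Restricting to $v \in V^{S_{\e_a\dd}}$ the averaging over $S_{\e_a\dd}$ collapses to the identity, leaving $(E_a\tilde\phi)(v) = d_{a+1}\phi(v) = d_{a+1}\tilde\phi\bigl(\tfrac{1}{\# S_\dd} \sum_{g \in S_\dd} g v\bigr)$ by the formula from the previous step. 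This exhibits $E_a$ as the adjoint of the map $V^{S_{\e_a\dd}} \to V^{S_\dd}$ sending $v \mapsto \tfrac{d_{a+1}}{\# S_\dd} \sum_{g \in S_\dd} g v$.

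Finally, I would compare with $F_a$ on the $V$-side. By Proposition~\ref{prop:chevalley-gen-action-via-schur-weyl-duality}, $F_a\colon V^{S_{\e_a\dd}} \to V^{S_{\f_a\e_a\dd}} = V^{S_\dd}$ equals $(d_a+1)\cdot \Psi_{\dd,\e_a\dd}$, i.e.\ $v \mapsto \tfrac{d_a+1}{\# S_\dd}\sum_{g \in S_\dd} g v$ (the prefactor is the $a$-th entry of $\e_a\dd$). Dividing, the adjoint target from the previous paragraph equals $\tfrac{d_{a+1}}{d_a+1} F_a$, which is the first claim. The case of $F_a$ is entirely symmetric: swap $\e_a$ with $\f_a$ and the roles of $d_a$ and $d_{a+1}$. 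The main obstacle is purely bookkeeping: the coefficient $\tfrac{d_{a+1}}{d_a+1}$ arises because the averaging inside $\Psi^{V^*}_{\e_a\dd,\dd}$ normalizes by $1/\#S_{\e_a\dd}$ while $\Psi_{\dd,\e_a\dd}$ on the $V$-side normalizes by $1/\#S_\dd$, and one must confirm that the two external prefactors $d_{a+1}$ and $d_a+1$ combine with these normalizations and with the restriction-of-functionals identification to give exactly the stated ratio.
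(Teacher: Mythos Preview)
Your proposal is correct and follows exactly the approach the paper indicates: apply Proposition~\ref{prop:chevalley-gen-action-via-schur-weyl-duality} to $V^*$ and unwind the identification $(V^*)^{S_\dd}\cong(V^{S_\dd})^*$ via \eqref{eq:0303}. The paper itself only says ``unwinding Proposition~\ref{prop:chevalley-gen-action-via-schur-weyl-duality} and using \eqref{eq:0303}'' without giving details, so your explicit bookkeeping of the normalization factors ($1/\#S_{\e_a\dd}$ versus $1/\#S_{\dd}$, and the prefactors $d_{a+1}$ versus $d_a+1$) is precisely the content that the paper leaves implicit.
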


\subsection{Realizing Chevalley generators in the Braverman-Gaitsgory construction}

Recall that a key step in the Braverman-Gaitsgory construction is Theorem \ref{thm:brav-gaits-t-equiv-isom}, which we recall is the isomorphism:
\begin{align}
  \label{eq:67}
   (\Gro \otimes \E^{\otimes d})^{S_d} \overset{\sim}{\rightarrow}  (p_{n,d})_*\cK^{n,d}_\E
\end{align}
Roughly speaking, \eqref{eq:67} is a sheaf-theoretic version of \eqref{eq:0270}. Below we will explicitly describe this isomorphism in detail.

\subsubsection{Explicit description of  \eqref{eq:67}}
Write
\begin{align}
  \label{eq:0254}
 \pi_{\dd,1^d} : \tfg_{1^d} \rightarrow \tfg_{\dd} 
\end{align}
for the natural map. Notice that $\pi_{\dd,1^d}$ is proper.
Recall that
\begin{align}
  \label{eq:0255}
\cK^{n,d}_\E \Big\vert_{\tfg_{\dd}} = \E^\dd[d^2]
\end{align}
by definition.
Because both $\tfg_{1^d}$ and  $\tfg_{\dd}$ are both smooth of dimension $d^2$, we have:
\begin{align}
  \label{eq:0257}
   \E^\dd[d^2] = \pi_{\dd,1^d}^! \E^\dd[d^2] 
\end{align}
By the $\left( (\pi_{\dd,1^d})_!,\pi_{\dd,1^d}^! \right)$-adjunction, we obtain a map
\begin{align}
  \label{eq:0258}
     (\pi_{\dd,1^d})_! \E^\dd[d^2] \rightarrow  \E^\dd[d^2] 
\end{align}
of sheaves on $\fg_\dd$.
Because $\pi_{\dd,1^d}$ is proper, we equivalently have a map:
\begin{align}
  \label{eq:0259}
     (\pi_{\dd,1^d})_* \E^\dd[d^2] \rightarrow  \E^\dd[d^2] 
\end{align}
Applying $(p_{\dd})_*$, we obtain a map
\begin{align}
  \label{eq:0260}
     \Gro \otimes \E^\dd \rightarrow  (p_{\dd})_* \E^\dd[d^2]
\end{align}
which on fibers exactly corresponds to the proper pushforward in Borel-Moore homology from the fibers of $p_{1^d}$ to the fibers of $p_{\dd}$.
Summing over $\dd \in \cP_{n,d}$, we obtain a map:
\begin{align}
  \label{eq:0261}
     \bigoplus_{\dd \in \cP_{n,d}} \Gro \otimes \E^\dd \rightarrow  (p_{n,d})_* \cK^{n,d}_\E
\end{align}

Notice that for each $\dd$, we have the map:
\begin{align}
  \label{eq:0262}
 \Gro^{S_\dd} \otimes \E^\dd  \rightarrow \Gro \otimes \E^\dd
\end{align}
which sums to a map:
\begin{align}
  \label{eq:0263}
\bigoplus_{\dd \in \cP_{n,d}} \Gro^{S_\dd} \otimes \E^\dd  \rightarrow \bigoplus_{\dd \in \cP_{n,d}} \Gro \otimes \E^\dd
\end{align}
We can consider the composed map:
\begin{align}
  \label{eq:0264}
\bigoplus_{\dd \in \cP_{n,d}} \Gro^{S_\dd} \otimes \E^\dd  \rightarrow \bigoplus_{\dd \in \cP_{n,d}} \Gro \otimes \E^\dd
\rightarrow  (p_{n,d})_* \cK^{n,d}_\E  
\end{align}
By the smallness of the maps $p_{n,d}$ and the $p_{1^d}$, all the sheaves in \eqref{eq:0264} are Goresky-MacPherson extensions of their restrictions to the open set $\fg^{rs}_d$  of regular semisimple elements. On the regular semi-simple locus, the composed map \eqref{eq:0264} is an isomorphism by the discussion in \cite[\S 2.6]{Braverman-Gaitsgory}. Therefore, by the Perverse Continuation Principle, the composed map \eqref{eq:0264} is an isomorphism on all of $\fg_d$.
Recall, that by \eqref{eq:61}, we have a canonical isomorphism:
\begin{align}
  \label{eq:0256}
\bigoplus_{\dd \in \cP_{n,d}} \Gro^{S_\dd} \otimes \E^\dd  \overset{\sim}{\rightarrow} (\Gro \otimes \E^{\otimes d})^{S_d}
\end{align}
Noting that $(\Gro \otimes \E^{\otimes d})^{S_d}$ has a $\GL(\E)$-action, we can transport structure to obtain a $\GL(\E)$-action on $\bigoplus_{\dd \in \cP_{n,d}} \Gro^{S_\dd} \otimes \E^\dd$

\subsubsection{The $\GL(\E)$-action on stalks}
Let $\lambda \vdash d$. Let $\OO_\lambda \subset \fg_{d}$ be the corresponding nilpotent orbit and let $x \in \OO_\lambda$.

The $!$-stalks of $\Gro$ and $(p_{\dd})_* \E^\dd[d^2]$ are equal to the Borel-Moore homologies $H_\bullet(\tfg^x_{1^d})$ and $H_\bullet(\tfg^x_{\dd})$ respectively.
The $S_d$-action on $\Gro$ gives rise to an $S_d$-action $H_\bullet(\tfg^x_{1^d})$. This is given by the usual Springer $S_d$-action on $H_\bullet(\tfg^x_{1^d})$ (see e.g. \cite[Ch. 13]{Jantzen}).

Recall that the map \eqref{eq:0261} corresponds to the proper pushforward from $H_\bullet(\tfg^x_{1^d})$ to $H_\bullet(\tfg^x_{\dd})$. Therefore, two maps in \eqref{eq:0264}, induce the maps:
\begin{align}
  \label{eq:0266}
 H_\bullet(\tfg^x_{1^d})^{S_{\dd}} \hookrightarrow H_\bullet(\tfg^x_{1^d}) \rightarrow H_\bullet(\tfg^x_{\dd})
\end{align}
As the composed map in \eqref{eq:0264} is an isomorphism, the composed map
\begin{align}
  \label{eq:0267}
 H_\bullet(\tfg^x_{1^d})^{S_{\dd}} \overset{\sim}{\rightarrow} H_\bullet(\tfg^x_{\dd})
\end{align}
is an isomorphism.
Summing over all $\dd$ and tensoring with $\E^{\dd}$ we have an isomorphism:
\begin{align}
  \label{eq:0268}
 \bigoplus_{\dd \in \cP_{n,d}} H_\bullet(\tfg^x_{1^d})^{S_{\dd}} \otimes \E^\dd \overset{\sim}{\rightarrow} \bigoplus_{\dd \in \cP_{n,d}} H_\bullet(\tfg^x_{\dd}) \otimes \E^\dd
\end{align}
The subrepresentation
\begin{align}
  \label{eq:0272}
 \bigoplus_{\dd \in \cP_{n,d}} H_{2d_\lambda} (\tfg^x_{1^d})^{S_{\dd}} \otimes \E^\dd \overset{\sim}{\rightarrow} \bigoplus_{\dd \in \cP_{n,d}} H_{2d_\lambda}(\tfg^x_{\dd}) \otimes \E^\dd
\end{align}
is isomorphic to the irreducible $\GL(\E)$-module $V(\lambda)$.  The $d_\lambda$-dimensional irreducible components of $\tfg^x_{\dd}$ form a basis of $H_{2d_\lambda}(\tfg^x_{\dd})$. 

Note that $\tfg^0_{1^d} = \cF_{1^d}$, and we have an inclusion $\tfg^x_{1^d} \hookrightarrow \cF_{1^d}$. 
By the argument in the proof of Theorem 6.5.2(b) in Chriss-Ginzburg, the map $H_\bullet(\tfg^x_{1^d}) \rightarrow H_\bullet(\cF_{1^d})$ is $S_d$-equivariant.
Therefore, the map
\begin{align}
  \label{eq:0269}
  \bigoplus_{\dd \in \cP_{n,d}} H_\bullet(\tfg^x_{1^d})^{S_{\dd}} \otimes E^\dd \rightarrow \bigoplus_{\dd \in \cP_{n,d}} H_\bullet(\cF_{1^d})^{S_{\dd}} \otimes E^\dd
\end{align}
is $\GL(E)$-equivariant. Similarly, the map
\begin{align}
  \label{eq:0271}
\bigoplus_{\dd \in \cP_{n,d}} H_\bullet(\tfg^x_{\dd}) \otimes E^\dd  \rightarrow \bigoplus_{\dd \in \cP_{n,d}} H_\bullet(\cF_{\dd}) \otimes E^\dd
\end{align}
is $\GL(E)$-equivariant.
By the irreducibility of $V(\lambda)$, the map
\begin{align}
  \label{eq:0273}
  \bigoplus_{\dd \in \cP_{n,d}} H_{2d_\lambda}(\tfg^x_{\dd}) \otimes E^\dd \hookrightarrow \bigoplus_{\dd \in \cP_{n,d}} H_\bullet(\cF_{\dd}) \otimes E^\dd
\end{align}
is injective (see the proof of Theorem 6.5.2(a) in Chriss-Ginzburg). We summarize this discussion as the following proposition.

\begin{Proposition}
  Let $\lambda \vdash d$, and let $x \in \OO_\lambda$. Let $d_\lambda = \dim \cF_{1^d} - \frac{1}{2}\dim \OO_{\lambda} = \sum \lambda_i (i-1)$. Then all the maps in the following commutative square are $\GL(E)$-equivariant.
  \begin{equation}
    \label{eq:0274}
    \begin{tikzcd}
  \bigoplus_{\dd \in \cP_{n,d}} H_{2d_\lambda}(\tfg^x_{1^d})^{S_{\dd}} \otimes \E^\dd  \arrow[d,"\sim"] \arrow[r,hookrightarrow] & \bigoplus_{\dd \in \cP_{n,d}} H_\bullet(\cF_{1^d})^{S_{\dd}} \otimes \E^\dd \arrow[d,"\sim"] \\
  \bigoplus_{\dd \in \cP_{n,d}} H_{2d_\lambda}(\tfg^x_{\dd}) \otimes \E^\dd \arrow[r,hookrightarrow] & \bigoplus_{\dd \in \cP_{n,d}} H_\bullet(\cF_{\dd}) \otimes \E^\dd 
    \end{tikzcd}
  \end{equation}
The horizontal maps are inclusions, and the vertical maps are isomorphisms.  
\end{Proposition}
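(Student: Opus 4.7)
The plan is to assemble pieces that have already been developed in the preceding discussion; nothing new needs to be constructed, but the various pieces must be fit together carefully. The two vertical maps both arise from the Braverman-Gaitsgory identification \eqref{eq:0264}. Concretely, the left vertical isomorphism is what one gets by taking $!$-stalks of \eqref{eq:0264} at $x \in \OO_\lambda$, restricting to top-degree Borel-Moore homology $H_{2d_\lambda}$, and summing over $\dd$; this is precisely \eqref{eq:0272}. The right vertical isomorphism is obtained by the same procedure, but applied to the point $x = 0$, for which $\tfg^0_{1^d} = \cF_{1^d}$ and $\tfg^0_{\dd} = \cF_{\dd}$. Since both are defined by transport of the manifest $\GL(\E)$-action on $(\Gro \otimes \E^{\otimes d})^{S_d}$ through functorial operations (stalks, proper pushforward, Schur-Weyl projection), they are $\GL(\E)$-equivariant by construction.

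Next, I would construct the horizontal maps from the closed embeddings $\tfg^x_{1^d} \hookrightarrow \cF_{1^d}$ and $\tfg^x_{\dd} \hookrightarrow \cF_{\dd}$ by proper pushforward in Borel-Moore homology, followed by taking $S_\dd$-invariants (where applicable), tensoring with $\E^\dd$, and summing over $\dd$. Injectivity of the bottom horizontal map is the content of \eqref{eq:0273}, which in turn rests on \cite[Theorem 6.5.2(a)]{Chriss-Ginzburg}. Injectivity of the top horizontal map then follows from the same theorem applied to $H_{2d_\lambda}(\tfg^x_{1^d}) \hookrightarrow H_\bullet(\cF_{1^d})$, combined with exactness of the functor $(-)^{S_\dd}$ on $\CC[S_d]$-modules over $\CC$.

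For the $\GL(\E)$-equivariance of the horizontal maps, the essential input is \cite[Theorem 6.5.2(b)]{Chriss-Ginzburg}: the embedding $H_\bullet(\tfg^x_{1^d}) \hookrightarrow H_\bullet(\cF_{1^d})$ is $S_d$-equivariant for the Springer $S_d$-action on the target. Since the Braverman-Gaitsgory $\GL(\E)$-action on both columns is manufactured entirely from this $S_d$-structure together with the Schur-Weyl identification \eqref{eq:0256} and the $\dd$-decomposition, $\GL(\E)$-equivariance transfers from the $S_d$-equivariant embeddings at the level of $\tfg^x_{1^d} \hookrightarrow \cF_{1^d}$ to the horizontal maps of the square after tensoring with $\E^\dd$ and summing.

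The main potential obstacle is commutativity of the square, which is really a naturality statement: the adjunction morphism $(\pi_{\dd,1^d})_* \CC_{\tfg_{1^d}}[d^2] \to \CC_{\tfg_\dd}[d^2]$ used in \eqref{eq:0258}--\eqref{eq:0259} must commute with restriction along the closed embedding of the nilpotent fiber into the full parameter space. This follows from naturality of the $((\pi_{\dd,1^d})_!, \pi_{\dd,1^d}^!)$-adjunction and proper base change; once this is noted, the diagram commutes fiber-by-fiber, and the summation over $\dd$ yields the claimed commutative square.
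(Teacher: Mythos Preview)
Your proposal is correct and matches the paper's approach essentially point for point: the vertical isomorphisms come from taking $!$-stalks of the Braverman--Gaitsgory isomorphism at $x$ and at $0$; the $\GL(\E)$-equivariance of the horizontal maps is deduced from the $S_d$-equivariance of $H_\bullet(\tfg^x_{1^d}) \to H_\bullet(\cF_{1^d})$ via \cite[Theorem 6.5.2(b)]{Chriss-Ginzburg} and Schur--Weyl; and injectivity is drawn from \cite[Theorem 6.5.2(a)]{Chriss-Ginzburg}. The paper presents this as a discussion preceding the proposition rather than a separate proof, but the content is the same; your treatment of commutativity via naturality of the adjunction and proper base change is slightly more explicit than what the paper writes, but it is the correct underlying reason.
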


\subsubsection{Chevalley generators}

We follow the discussion and notation in section  \ref{sec:schur-weyl-duality-the-action-of-chevalley-generators}. We have our basis element $\theta_\dd \in E^\dd$ for each $\dd \in \cP_{n,d}$. Let $\dd \in \cP_{n,d}$. Consider $a \in [n-1]$ so that $\e_a \dd \neq \nabla$. Then we have the operator $E_a = d_{a+1} \Psi_{\e_a\dd,\dd} : H_\bullet(\cF_{1^d})^{S_\dd} \rightarrow H_\bullet(\cF_{1^d})^{S_{\e_a\dd}}$. 
We have a commutative diagram:
\begin{equation}
  \label{eq:0304}
    \begin{tikzcd}
    H_\bullet(\cF_{1^d})^{S_{\dd}}  \arrow[d,"\sim"] \arrow[r,"E_a"] &  H_\bullet(\cF_{1^d})^{S_{\e_a\dd}}  \arrow[d,"\sim"] \\
    H_\bullet(\cF_{\dd})  \arrow[r,"E_a"] &  H_\bullet(\cF_{\e_a\dd}) 
    \end{tikzcd}
\end{equation}

The $S_d$-action on $H_\bullet(\cF_{1^d})$ is given by restricting the $S_d$-action on $H^T_\bullet(\cF_{1^d})$, so 
formula \eqref{eq:0288} defining $\Psi_{\e_a\dd,\dd}$ has a natural equivariant lift to an operator:
\begin{align}
  \label{eq:0316}
  \Psi_{\e_a\dd,\dd} : H_\bullet(\cF_{1^d})^{S_{\dd}}  \rightarrow  H_\bullet(\cF_{1^d})^{S_{\e_a\dd}} 
\end{align}
Therefore the commutative square \eqref{eq:0304} has a natural $T$-equivariant lift:
\begin{equation}
  \label{eq:0317}
    \begin{tikzcd}
    H_\bullet^T(\cF_{1^d})^{S_{\dd}}  \arrow[d,"\sim"] \arrow[r,"E_a"] &  H_\bullet^T(\cF_{1^d})^{S_{\e_a\dd}}  \arrow[d,"\sim"] \\
    H_\bullet^T(\cF_{\dd})  \arrow[r,"E_a"] &  H_\bullet^T(\cF_{\e_a\dd}) 
    \end{tikzcd}
\end{equation}
The $S_d$-action on $H^T_\bullet(\cF_{1^d})$ commutes with the action of $H^T_\bullet(\pt)$, and we therefore conclude that all the maps in \eqref{eq:0317} are equivariant for the action of $H^T_\bullet(\pt)$.

We will compute the how the operators $E_a$ act on torus fixed points, which by the localization formula, will determine the operators. Let $w \in S_d$, and let $\frac{1}{\#S_{\dd}}\sum_{x \in S_{\dd}}  [wx] \in H_\bullet(\cF_{1^d})^{S_{\dd}}$. Under proper pushforward to $H_\bullet(\cF_{\dd})$:
\begin{align}
  \label{eq:0305}
\frac{1}{\#S_{\dd}} \sum_{x \in S_{\dd}}  [wx]  \mapsto [w S_{\dd}]
\end{align}
Under $E_a : H_\bullet(\cF_{1^d})^{S_{\dd}}  \rightarrow H_\bullet(\cF_{1^d})^{S_{\e_a\dd}}$, by Proposition  \ref{prop:chevalley-gen-action-via-schur-weyl-duality}:
\begin{align}
  \label{eq:0306}
\frac{1}{\#S_{\dd}} \sum_{x \in S_{\dd}}  [wx]  \mapsto \frac{d_{a+1}}{\# S_{\dd} \# S_{\e_a \dd}} \sum_{x \in S_{\dd}}\sum_{y \in S_{\e_a\dd}} [wxy]
\end{align}
Under proper pushforward to $H_{\bullet}(\cF_{\e_a\dd})$:
\begin{align}
  \label{eq:0299}
\frac{d_{a+1}}{\# S_{\dd} \# S_{\e_a \dd}} \sum_{x \in S_{\dd}}\sum_{y \in S_{\e_a\dd}} [wxy] \mapsto
\frac{d_{a+1}}{\# S_{\dd} } \sum_{x \in S_{\dd}} [wxS_{\e_a\dd}] 
\end{align}
Therefore, we have the following.

\begin{Proposition}
  Let $\dd \in \cP_{n,d}$, and let $wS_{\dd} \in S_{d}/S_{\dd}$. Let $a \in [n-1]$. When $\e_a \dd \neq \nabla$,
  the operator 
  \begin{align}
    E_a : H_\bullet(\cF_{\dd})  \rightarrow  H_\bullet(\cF_{\e_a\dd}) 
  \end{align}
  satisfies:  
  \begin{align}
    E_a([wS_\dd]) = \frac{d_{a+1}}{\# S_{\dd} } \sum_{x \in S_{\dd}} [wxS_{\e_a\dd}] 
  \end{align}
  Similarly, when $\f_a \dd \neq \nabla$, 
  the operator 
  \begin{align}
    F_a : H_\bullet(\cF_{\dd})  \rightarrow   H_\bullet(\cF_{\f_a\dd}) 
  \end{align}
  satisfies:  
  \begin{align}
    F_a([wS_\dd]) = \frac{d_{a}}{\# S_{\dd} } \sum_{x \in S_{\dd}} [wxS_{\f_a\dd}] 
  \end{align}
\end{Proposition}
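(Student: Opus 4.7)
The plan is to execute the calculation already sketched in display lines \eqref{eq:0305}--\eqref{eq:0299}, but carefully lifted to the $T$-equivariant setting via diagram \eqref{eq:0317}, and then to specialize. Throughout I would work in $H^T_\bullet(\cF_{1^d})^{S_\dd}$ and transport along the vertical isomorphisms in \eqref{eq:0317}, which are given by proper pushforward along the projections $\pi_{\dd,1^d} : \cF_{1^d} \to \cF_\dd$.

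First, I would identify the preimage of the $T$-equivariant fixed-point class $[wS_\dd] \in H^T_\bullet(\cF_\dd)$ under this pushforward. The scheme-theoretic fiber of $\pi_{\dd,1^d}$ over the $T$-fixed point $wS_\dd$ is a product of flag varieties whose $T$-fixed points are exactly $\{wx : x \in S_\dd\}$, each of which maps to $wS_\dd$. One checks that the $S_\dd$-invariant class $\frac{1}{\#S_\dd}\sum_{x \in S_\dd} [wx]$ is the unique $S_\dd$-invariant lift whose pushforward is $[wS_\dd]$. Second, I would apply $E_a = d_{a+1}\,\Psi_{\e_a\dd,\dd}$, using Proposition \ref{prop:chevalley-gen-action-via-schur-weyl-duality}; since the $S_d$-action commutes with that of $H^\bullet_T(\pt)$, this formula lifts tautologically to equivariant homology, producing
\begin{equation*}
\frac{d_{a+1}}{\#S_\dd \,\#S_{\e_a\dd}} \sum_{x \in S_\dd} \sum_{y \in S_{\e_a\dd}} [wxy] \;\in\; H^T_\bullet(\cF_{1^d})^{S_{\e_a\dd}}.
\end{equation*}

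Third, I would push forward along $\pi_{\e_a\dd,1^d}$. Each $[wxy]$ maps to $[wxy\,S_{\e_a\dd}] = [wx\,S_{\e_a\dd}]$ because $y \in S_{\e_a\dd}$, so the inner sum over $y$ produces a factor of $\#S_{\e_a\dd}$, collapsing the expression to $\frac{d_{a+1}}{\#S_\dd}\sum_{x \in S_\dd}[wx\,S_{\e_a\dd}]$ in $H^T_\bullet(\cF_{\e_a\dd})$. Specializing the $T$-equivariant parameters to zero yields the claimed non-equivariant formula. The case of $F_a$ is identical, using the second half of Proposition \ref{prop:chevalley-gen-action-via-schur-weyl-duality}.

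Since $E_a$ is $H^\bullet_T(\pt)$-linear and the classes $\{[wS_\dd]\}$ generate $H^T_\bullet(\cF_\dd) \otimes_{H^\bullet_T(\pt)} \FF(H^\bullet_T(\pt))$ by the localization theorem (Theorem \ref{thm:brions-localization-thm}), the computation on fixed-point generators determines the operator unambiguously. The only genuine subtlety is checking that the vertical isomorphism in \eqref{eq:0317} really is given by proper pushforward on $T$-fixed-point classes as claimed; this is standard and follows from the non-degeneracy of the $T$-action on $\cF_{1^d}$ together with the fact that the map $\pi_{\dd,1^d}$ is a smooth proper map whose $T$-fixed fiber is itself a flag variety. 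With that confirmed, everything else is bookkeeping.
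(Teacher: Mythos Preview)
Your proposal is correct and follows essentially the same approach as the paper: the paper's proof is precisely the computation displayed in \eqref{eq:0305}--\eqref{eq:0299} immediately preceding the proposition, together with the remark that localization reduces the question to fixed-point classes. Your write-up is somewhat more careful about the equivariant lift and about justifying why the vertical maps in \eqref{eq:0317} are proper pushforwards, but the underlying three-step computation (lift $[wS_\dd]$ to an $S_\dd$-invariant sum in $H_\bullet(\cF_{1^d})$, apply $d_{a+1}\Psi_{\e_a\dd,\dd}$, push forward) is identical.
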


\begin{Remark}
  Analagous formulas for the action of Chevalley generators on the cohomology of little Spaltenstein varieties have been given by Brundan, Ostrik, and Vasserot \cite{Brundan,Brundan-Ostrik,Vasserot-1993}. 
\end{Remark}

\subsubsection{Chevalley generators as correspondences}

Let $\dd, \dd' \in \cP_{n,d}$. Recall that we can form the ``Steinberg'' variety $\tcN_{\dd'} \times_{\cN_{n,d}} \tcN_{\dd}$. The component $Z_{\dd',\dd}$, which we initially defined as the conormal bundle of the subvariety $Y_{\dd',\dd} \subseteq \cF_{\dd'} \times \cF_{\dd}$, can also be defined as the following fiber product:
\begin{equation}
  \label{eq:0310}
\begin{tikzcd}
   Z_{\dd',\dd} \arrow[d] \arrow[r] \arrow[dr, phantom, "\square"] & \tcN_{\dd'} \times_{\cN_{n,d}} \tcN_{\dd} \arrow[d] \\
   Y_{\dd',\dd} \arrow[r,hookrightarrow] &   \cF_{\dd'} \times \cF_{\dd} 
\end{tikzcd}
\end{equation}

Similarly, on the ``$\tfg$''-side, we can form the ``Steinberg'' variety $\tfg_{\dd'} \times_{\fg_d} \tfg_{\dd}$. Analagous to $Z_{\e_a \dd,\dd}$, we can form the subvariety $X_{\dd',\dd}$ that is defined as the following fiber product:
\begin{equation}
  \label{eq:0309}
  \begin{tikzcd}
   X_{\dd',\dd} \arrow[d] \arrow[r] \arrow[dr, phantom, "\square"] & \tfg_{\dd'} \times_{\fg_d} \tfg_{\dd} \arrow[d] \\
   Y_{\dd',\dd} \arrow[r,hookrightarrow] &   \cF_{\dd'} \times \cF_{\dd} 
 \end{tikzcd}
\end{equation}

We have
\begin{align}
  \label{eq:0313}
  Y_{\dd',\dd} = G/{P_{\dd'} \cap P_{\dd}} 
\end{align}
and:
\begin{align}
  \label{eq:0311}
  Z_{\dd',\dd} = G \times^{P_{\dd'} \cap P_{\dd}} (\fu_{\dd} \cap \fu_{\dd'})
\end{align}
Similarly, we have:
\begin{align}
  \label{eq:0312}
  X_{\dd',\dd} = G \times^{P_{\dd'} \cap P_{\dd}} (\fp_{\dd} \cap \fp_{\dd'})
\end{align}
From this, we compute
\begin{align}
  \label{eq:0314}
  [X_{\dd',\dd}] = \sum_{x \in S_{d} / (S_{\dd'} \cap S_{\dd})} x \left(\frac{1}{ \eu_0(\fu^-_{\dd'}+ \fu^-_\dd)\eu_0( (\fp_{\dd'} \cap \fp_\dd) \otimes \CC_h )} \right) [xS_{\dd'},xS_{\dd}]
\end{align}
as classes in localized equivariant homology.

\begin{Theorem}
  \label{thm:bg-chevalley-by-explicit-correspondences}
  Let $\dd \in \cP_{n,d}$, and let $c \in H_\bullet(\cF_\dd)$.  Let $a \in [n-1]$. When $\e_a \dd \neq \nabla$, we have:
  \begin{align}
    \label{eq:0315}
   [X_{\e_a\dd,\dd}] \star  c = E_a(c)
  \end{align}
When $\f_a \dd \neq \nabla$, we have:
  \begin{align}
    \label{eq:00315}
   [X_{\f_a\dd,\dd}] \star  c = F_a(c)
  \end{align}
\end{Theorem}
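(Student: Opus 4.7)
The strategy is to verify the identity on a convenient basis of $H_\bullet(\cF_\dd)$ by lifting to $T\times \GG_m$-equivariant Borel-Moore homology, expanding both sides via the localization theorem, and matching them using the excess intersection formula. Since both $[X_{\e_a\dd,\dd}]\star -$ and $E_a$ are linear, it suffices to evaluate them on the basis of fundamental classes of $T$-fixed points $\{[wS_\dd]\}_{w \in S_d/S_\dd}$ of $\cF_\dd$. The preceding Proposition computes $E_a([wS_\dd])$ directly, so the task is to compute the left-hand side equivariantly and recover the same answer after specializing $h=0$.

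First I would use \eqref{eq:0314} to write $[X_{\e_a\dd,\dd}]$ in localized $T\times \GG_m$-equivariant homology as an explicit sum of fixed-point classes $[xS_{\e_a\dd},xS_\dd]$ indexed by $x \in S_d/(S_{\e_a\dd}\cap S_\dd)$, with rational coefficients built out of $\eu_0(\fu^-_{\e_a\dd}+\fu^-_\dd)$ and $\eu_0((\fp_{\e_a\dd}\cap \fp_\dd)\otimes \CC_h)$. Next, I would apply the excess intersection formula \eqref{eq:31} inside the ambient smooth variety $\cF_{\e_a\dd}\times\cF_\dd$: the convolution $[xS_{\e_a\dd},xS_\dd]\star[wS_\dd]$ vanishes unless $xS_\dd=wS_\dd$, in which case it contributes $\eu_{wS_\dd}(\cF_\dd)\cdot[xS_{\e_a\dd}]$. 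The surviving indices therefore range over $wS_\dd/(S_{\e_a\dd}\cap S_\dd)$; a direct combinatorial check using the common refinement $\dd''=(d_1,\ldots,d_a,1,d_{a+1}-1,\ldots,d_n)$ shows $[S_\dd:S_{\e_a\dd}\cap S_\dd]=d_{a+1}$, giving exactly $d_{a+1}$ surviving fixed points $[xS_{\e_a\dd}]$, consistent with the multiplicity factor in the Proposition.

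The core of the computation is then the Euler class manipulation. One uses $\fu^-_{\e_a\dd}+\fu^-_\dd = \fu^-_{\dd''}$, the direct sum decompositions $\fg_d = \fp_\dd \oplus \fu^-_\dd = \fp_{\dd''}\oplus \fu^-_{\dd''}$, and $\eu_{wS_\dd}(\cF_\dd)=w(\eu_0(\fu^-_\dd))$ to simplify the coefficient
\[
x\!\left(\frac{\eu_0(\fu^-_\dd)}{\eu_0(\fu^-_{\e_a\dd}+\fu^-_\dd)\,\eu_0((\fp_{\e_a\dd}\cap\fp_\dd)\otimes \CC_h)}\right)
\]
for $x \in wS_\dd$. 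The numerator and denominator organize so that all $T$-weights outside the cotangent direction cancel, and the remaining $\CC_h$-factor has a well-defined limit as $h \to 0$ (well-definedness follows a priori because $X_{\e_a\dd,\dd}$ has a genuine non-equivariant fundamental class). The limit is independent of the coset representative and, after reindexing the $d_{a+1}$ surviving fixed points by $x \in S_\dd/(S_\dd\cap S_{\e_a\dd})$, one recovers
\[
[X_{\e_a\dd,\dd}]\star[wS_\dd] \;=\; \sum_{x \in S_\dd/(S_\dd\cap S_{\e_a\dd})} [wxS_{\e_a\dd}] \;=\; \frac{d_{a+1}}{\#S_\dd}\sum_{x \in S_\dd}[wxS_{\e_a\dd}],
\]
which is exactly $E_a([wS_\dd])$ by the preceding Proposition. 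The case of $F_a$ is entirely parallel, with $\e_a$ replaced by $\f_a$ and the factor $d_{a+1}$ replaced by $d_a$, reflecting $[S_\dd:S_{\f_a\dd}\cap S_\dd]=d_a$.

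The main obstacle will be the Euler class bookkeeping in the simplification step: one must verify that the equivariant coefficient collapses to the constant $1$ as $h\to 0$, independent of $x$, so that the resulting sum has no residual $T$-equivariant weight factors. This is a close analogue of the cancellation performed in the Ginzburg calculation \eqref{eq:62}–\eqref{eq:64}, but carried out for a single correspondence rather than a composition of two, and for the $\tfg$-version rather than the $\tcN$-version; in particular, the absence of the second correspondence eliminates the subtle cross-terms indexed by $y=xs_a$ that appeared in that earlier computation, so the present calculation is strictly simpler.
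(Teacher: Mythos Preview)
Your overall strategy coincides with the paper's: lift to $T\times\GG_m$-equivariant homology, check the identity on fixed-point classes $[wS_\dd]$ via localization and excess intersection, simplify the Euler-class coefficient, specialize $h=0$, and match against the preceding Proposition. The paper does exactly this, obtaining
\[
[X_{\e_a\dd,\dd}]\star[wS_\dd]=\sum_{y\in S_\dd/(S_{\e_a\dd}\cap S_\dd)} wy\!\left(\frac{\eu_0(\fu^-_\dd)\,\eu_0(\fp_\dd\otimes\CC_h)}{\eu_0(\fu^-_{\e_a\dd}+\fu^-_\dd)\,\eu_0((\fp_{\e_a\dd}\cap\fp_\dd)\otimes\CC_h)}\right)[wyS_{\e_a\dd}],
\]
observing that numerator and denominator share the same power of $h$ (from the Cartan in $\fp_\dd$ and in $\fp_{\e_a\dd}\cap\fp_\dd$), and that at $h=0$ the coefficient is identically $1$.

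There is one concrete slip in your outline. The ambient smooth variety for the excess intersection is $\tfg_{\e_a\dd}\times\tfg_\dd$, not $\cF_{\e_a\dd}\times\cF_\dd$: the correspondence $X_{\e_a\dd,\dd}$ lives in $\tfg_{\e_a\dd}\times_{\fg_d}\tfg_\dd$, and the convolution action on the fiber $\tfg^0_\dd=\cF_\dd$ is the Chriss--Ginzburg action computed in that ambient. Consequently the Euler class at the fixed point is $\eu_0(\fu^-_\dd)\cdot\eu_0(\fp_\dd\otimes\CC_h)$, not just $\eu_0(\fu^-_\dd)$, and your displayed coefficient is missing the factor $\eu_0(\fp_\dd\otimes\CC_h)$ in the numerator. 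Without it the expression is not even homogeneous of the correct degree and has no finite limit as $h\to 0$. Once you restore that factor, your decomposition $\fg_d=\fp_\dd\oplus\fu^-_\dd=(\fp_{\e_a\dd}\cap\fp_\dd)\oplus(\fu^-_{\e_a\dd}+\fu^-_\dd)$ gives precisely the cancellation the paper uses, and the rest of your argument (the index count $[S_\dd:S_{\e_a\dd}\cap S_\dd]=d_{a+1}$ and the final matching) goes through verbatim.
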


\begin{proof}
  We will prove \eqref{eq:0315}. Equation \eqref{eq:00315} is similar. We will show that $~\eqref{eq:315}$ holds $T$-equivariantly. Initially, we will work $T \times \GG_m$-equivariantly. By the localization, it suffices to consider the case when $c = [wS_{\dd}]$ is a torus-fixed point.
  We compute:
  \begin{align}
    \label{eq:0318}
    [X_{\e_a\dd,\dd}] \star  [wS_{\dd}] & = \sum_{y \in S_{\dd}/S_{\e_a\dd \cap S_{\dd}}} wy \left( \frac{\eu_0(\fu^-_\dd) \eu_0(\fp_\dd \otimes \CC_h) }{\eu_0(\fu_{\e_a\dd}^- + \fu_{\dd}^-) \eu_0( (\fp_{\e_a\dd} \cap \fp_{\dd}) \otimes \CC_h) }\right)[wyS_{\e_a\dd}] \\
  \end{align}
  Notice that both numerator and denominator have a factor of $h^n$. Therefore, we can specialize $h=0$ and compute:
  \begin{align}
    \label{eq:0319}
    [X_{\e_a\dd,\dd}] \star  [wS_{\dd}]\big\vert_{h=0} & = \sum_{y \in S_{\dd}/(S_{\e_a\dd} \cap S_{\dd})} [wyS_{\e_a\dd}] \\
                                                       & =  \frac{1}{\#(S_{\e_a\dd} \cap S_{\dd})} \sum_{y \in S_{\dd}} [wyS_{\e_a\dd}] \\
                                                         & = \frac{d_{a+1}}{\# S_{\dd}}\sum_{y \in S_{\dd}} [wyS_{\e_a\dd}] \\
                                                         & = E_a([wS_{\dd}])
  \end{align}
  
\end{proof}

\subsection{Action on weight zero spaces for small representations}

We will consider the analogue of the situation in \S~\ref{sec:ginz-weight-zero-small} for the Braverman-Gaitsgory action. Namely we will consider the case of $n=d$ for the remainder of this section. 
Fix $\lambda \vdash d$. Let $x \in \OO_\lambda \subset \cN_{d}$. We have an $\sl_d$-action on
\begin{align}
  \label{eq:68}
  \bigoplus_{\dd \in \cP_{d,d}} H_{2 d_\lambda}(\tfg^x_\dd)
\end{align}
that realizes the irreducible module $V(\lambda)$. The space
$H_{2 d_\lambda}(\tfg^x_{1^d})$ is precisely the zero weight space for the $\sl_d$-module. Recall that the operator $T_a$ acts as $1 - E_a F_a $. 

The following computation is a straightforward variation of the computation in \S \ref{subsubsect-computing-T-a}, which we will omit to save space.
\begin{Proposition}
  Let $a \in [d-1]$, as operators on $H^{T\times \GG_m}_{\bullet}(\tfg^0_{1^d})$ convolution by $[X_{1^d,\e_a 1^d}] \star [X_{\e_a 1^d,1^d}]$ is equal to:
  \begin{align}
    \label{eq:70}
    s_a + 1 + h \del_a
  \end{align}
\end{Proposition}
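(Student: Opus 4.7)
The plan is to mirror the computation of Section \ref{subsubsect-computing-T-a}, which produced $[Z_{1^d,\f_a 1^d}] \star [Z_{\f_a 1^d,1^d}] = s_a - 1 + h\del_a$ on little Spaltenstein varieties. Two structural differences must be tracked: we now use the big correspondences $X_{\bullet,\bullet}$ defined by \eqref{eq:0309} rather than the conormal $Z_{\bullet,\bullet}$, and we use $\e_a$ in place of $\f_a$. Crucially, $P_{\e_a 1^d} = P_{\f_a 1^d}$ (both are the minimal parabolic containing $B$ of type $a$), and correspondingly $S_{\e_a 1^d} = S_{\f_a 1^d} = \{1,s_a\}$, so the combinatorial scaffolding of the two calculations is identical.

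First I would apply formula \eqref{eq:0314} to expand $[X_{\e_a 1^d, 1^d}]$ and $[X_{1^d, \e_a 1^d}]$. Since $P_{\e_a 1^d} \supsetneq B = P_{1^d}$, one has $\fu^-_{\e_a 1^d} + \fu^-_{1^d} = \fu^-_{1^d}$, $\fp_{\e_a 1^d} \cap \fp_{1^d} = \fp_{1^d}$, and $S_{\e_a 1^d} \cap S_{1^d} = \{1\}$, which parallels the simplifications used on the $Z$-side. The only substantive change is that the factor $\eu_0(\fu_{\f_a 1^d} \otimes \CC_h)$ appearing in the $Z$-computation is replaced by $\eu_0(\fp_{1^d} \otimes \CC_h)$. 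Using the decomposition $\fp_{1^d} = \fh \oplus \CC\cdot E_a \oplus \fu_{\f_a 1^d}$, this substitutes in an extra factor $h^d(\alpha_a+h)$.

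Next I would pull the two classes back to the triple product $\tfg_{1^d}\times\tfg_{\e_a 1^d}\times\tfg_{1^d}$ via $p_{12}^*$ and $p_{23}^*$, take the refined intersection at torus-fixed points using the excess-intersection formula \eqref{eq:31}, and push forward along $p_{13}$ — exactly as in equations \eqref{eq:58}--\eqref{eq:63}. The coset condition $xS_{\e_a 1^d} = yS_{\e_a 1^d}$ again splits into the two cases $y=x$ and $y = xs_a$. The Cartan factors $h^d$ coming from $\fh \otimes \CC_h$ in numerator and denominator cancel symmetrically, while the $(\alpha_a+h)$-factors survive and recombine. After applying $(p_{13})_*$ one arrives, for each $u \in S_d$, at
\begin{align*}
[X_{1^d,\e_a 1^d}] \star [X_{\e_a 1^d,1^d}] \star [u]
= u\!\left(\tfrac{\alpha_a+h}{\alpha_a}\right) [u] \;+\; u\!\left(\tfrac{\alpha_a+h}{\alpha_a}\right) [u s_a]
= \bigl(s_a + 1 + h\,\del_a\bigr)[u],
\end{align*}
which is the claimed formula.

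The main obstacle is purely bookkeeping: one must keep careful track of which of $x$ or $xs_a$ acts on each Euler-class factor so that the combinations simplify cleanly to the BGG operator $\del_a$. The sign flip relative to the $Z$-computation (from $s_a - 1$ to $s_a + 1$) is traceable to the extra numerator factor $(\alpha_a + h)$; at $h=0$ it gives $E_aF_a = s_a+1$ on the zero weight space, hence $T_a = 1 - E_aF_a = -s_a$, consistent with Theorem \ref{thm:bgv-weight-zero-springer-action} that the BG action on this space is the Springer action tensored with the sign character.
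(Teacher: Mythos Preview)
Your approach is exactly what the paper intends (it calls this a ``straightforward variation'' of \S\ref{subsubsect-computing-T-a} and omits the details), and your structural observations are all correct: $P_{\e_a 1^d}=P_{\f_a 1^d}$, the coset combinatorics are identical, and the only change is in the Euler-class factors.

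However, the displayed equation at the end is wrong, and as written the final equality fails. With your coefficients,
\[
u\!\left(\tfrac{\alpha_a+h}{\alpha_a}\right)[u] + u\!\left(\tfrac{\alpha_a+h}{\alpha_a}\right)[us_a]
= [u]+[us_a] + \tfrac{h}{u(\alpha_a)}\bigl([u]+[us_a]\bigr),
\]
which is \emph{not} $(s_a+1+h\del_a)[u]=[u]+[us_a]+\tfrac{h}{u(\alpha_a)}\bigl([us_a]-[u]\bigr)$. The off-diagonal coefficient $u(\tfrac{\alpha_a+h}{\alpha_a})$ is correct (and in fact identical to the $Z$-case), but the diagonal coefficient is not.

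The slip is in your accounting of the changed ambient space. You correctly noted that $\eu_0(\fb\otimes\CC_h)=h^d(\alpha_a+h)\,\eu_0(\fu_{\f_a 1^d}\otimes\CC_h)$ in the denominator of \eqref{eq:0314}. But the \emph{middle} factor of the triple product also changes, from $\tcN_{\f_a 1^d}$ to $\tfg_{\e_a 1^d}$, and its Euler class picks up $\eu_0(\fp_{\e_a 1^d}\otimes\CC_h)$ in the numerator. Since $\fp_{\e_a 1^d}=\fb\oplus\CC\cdot F_a$, the ratio $\eu_0(\fp_{\e_a 1^d}\otimes\CC_h)/\eu_0(\fb\otimes\CC_h)=h-\alpha_a$, not $h+\alpha_a$. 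Carrying this through, the diagonal coefficient is
\[
u\!\left(\frac{h-\alpha_a}{-\alpha_a}\right)=1-\frac{h}{u(\alpha_a)},
\]
and then
\[
\Bigl(1-\tfrac{h}{u(\alpha_a)}\Bigr)[u]+\Bigl(1+\tfrac{h}{u(\alpha_a)}\Bigr)[us_a]=(s_a+1+h\del_a)[u],
\]
as claimed. So the source of the sign flip from $s_a-1$ to $s_a+1$ is the appearance of the $-\alpha_a$ weight (from $F_a\in\fp_{\e_a 1^d}$) in the numerator, not an extra $\alpha_a+h$.
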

\noindent Further specializing $h=0$ and restricting attention to the top homology of $\tfg^x_{1^d}$ we have
\begin{align}
  \label{eq:71}
  E_aF_a =  s_a + 1
\end{align}
as operators on $H_{2 d_\lambda}(\tfg^x_{1^d})$
We therefore calculate:
\begin{align}
  \label{eq:72}
  T_a &\equiv - s_a
\end{align}
\begin{Theorem}
  \label{thm:bgv-weight-zero-springer-action}
  Fix $\lambda \vdash d$. Let $x \in \OO_\lambda \subset \cN_{d}$. The Braverman-Gaitsgory Weyl group actions on the zero weight spaces $H_{2 d_\lambda}(\tfg^x_{1^d})$ and $H^{2 d_\lambda}(\tfg^x_{1^d})$ are equal to the Springer actions tensored with the sign representation.
\end{Theorem}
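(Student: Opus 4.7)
The plan is to transcribe the calculation of \S\ref{subsubsect-computing-T-a}, replacing the Ginzburg-type correspondences $Z_{\dd',\dd}$ with the Braverman-Gaitsgory correspondences $X_{\dd',\dd}$, and then interpret the resulting formula for $T_a$ in terms of Springer theory. The proposition stated immediately before the theorem records precisely the key output of this calculation, so the work consists of (a) justifying that formula, (b) extracting the $T_a$-action, and (c) transferring the statement from homology to cohomology.

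First I would carry out the convolution $[X_{1^d,\e_a 1^d}] \star [X_{\e_a 1^d,1^d}]$ in $H^{T \times \GG_m}_\bullet(\tfg_{1^d}^0)$. Using the explicit localized equivariant expression \eqref{eq:0314} for $[X_{\dd',\dd}]$ and the triple-product/refined-intersection/proper-pushforward template of \S\ref{subsubsect-computing-T-a}, the calculation proceeds identically to \eqref{eq:58}--\eqref{eq:63} except that the factor $\eu_0((\fu_{\e_a 1^d} \cap \fu_{1^d}) \otimes \CC_h)$ is replaced by $\eu_0((\fp_{\e_a 1^d} \cap \fp_{1^d}) \otimes \CC_h)$. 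The extra Cartan contribution in $\fp$ alters the rational function controlling the two contributing fixed points and flips the constant term, producing $s_a + 1 + h\del_a$ rather than $s_a - 1 + h\del_a$.

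Second, I would specialize $h = 0$ and restrict to the top homology $H_{2 d_\lambda}(\tfg_{1^d}^x)$. By Theorem \ref{thm:bg-chevalley-by-explicit-correspondences} the convolution above computes $E_a F_a$ on the zero weight space, so $E_a F_a = s_a + 1$. Plugging into \eqref{eq:122} yields $T_a \equiv -s_a$. Since the closed inclusion $\tfg_{1^d}^x \hookrightarrow \cF_{1^d}$ is $S_d$-equivariant for Springer's action on the left and the standard $S_d$-action on $H_\bullet(\cF_{1^d})$ on the right (the latter sending $[u] \mapsto [us_a]$), the operator $T_a = -s_a$ is exactly the Springer action tensored with the sign character.

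Third, for the cohomological statement on $H^{2 d_\lambda}(\tfg_{1^d}^x)$, I would dualize using Proposition \ref{prop:dual-chevalley-generators}. At the transitions $1^d \leftrightarrow \f_a 1^d$ in the $n = d$ setting, the adjointness scalars are $\frac{d_a}{d_{a+1}+1}\big\vert_{\dd = 1^d} = \frac{1}{2}$ for $F_a^{\mathrm{coh}}$ and $\frac{(\f_a 1^d)_{a+1}}{(\f_a 1^d)_a+1} = 2$ for $E_a^{\mathrm{coh}}$; their product is $1$, so $E_a^{\mathrm{coh}} F_a^{\mathrm{coh}} = (E_a^{\mathrm{hom}} F_a^{\mathrm{hom}})^*$. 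Hence $T_a^{\mathrm{coh}} = (T_a^{\mathrm{hom}})^* = (-s_a)^* = -s_a$, since $s_a$ is an involution that is self-transpose on the Specht module, and the Springer action on cohomology is the transpose of that on homology. The main obstacle will be the sign bookkeeping in the substitution $\fu \rightsquigarrow \fp$: checking that this single replacement flips exactly the constant term without introducing extraneous signs or $h$-corrections requires carefully redoing the simplifications leading from \eqref{eq:63} to \eqref{eq:64}, and a subsidiary subtlety is verifying that the scalar factors from Proposition \ref{prop:dual-chevalley-generators} indeed multiply to unity at precisely the compositions used here, so that the duality step transfers the sign-twisted Springer identification cleanly.
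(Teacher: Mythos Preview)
Your proposal is correct and follows essentially the same route as the paper. The paper states the convolution identity $[X_{1^d,\e_a 1^d}] \star [X_{\e_a 1^d,1^d}] = s_a + 1 + h\del_a$ as a proposition, calls it ``a straightforward variation of the computation in \S\ref{subsubsect-computing-T-a}, which we will omit to save space,'' specializes $h=0$ to get $E_aF_a = s_a+1$ and hence $T_a \equiv -s_a$, and then invokes Proposition~\ref{prop:dual-chevalley-generators} in a single sentence for the cohomology statement; you simply fill in those omitted details, including the explicit scalar check that the duality constants multiply to $1$.
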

\begin{proof}
 The statement in Borel-Moore homology is exactly what we have shown in the discussion above. To obtain the statement in cohomology, we use Proposition \ref{prop:dual-chevalley-generators} to compute that the Weyl group action on $H^{2 d_\lambda}(\tfg^x_{1^d})$ is exactly the dual action.
\end{proof}

\section{Mirkovi\'c-Vilonen cycles and orbital varieties}
\label{sec:MV-cycles-and-orbital-varieties}

\subsection{Lusztig's map and the first bijection between Mirkovi\'c-Vilonen cycles and orbital varieties}

In this section we will use the result of Braverman, Gaitsgory, and Vybornov (Theorem \ref{thm:bgv-comparison-with-mv-basis}) and our Theorem \ref{thm:bgv-weight-zero-springer-action} to compare Weyl group actions on MV cycles and orbital varieties.

Let us write $\Gr$ for the affine Grassmannian of $\SL_d$. Let $\Gr_0$ be the big cell of $\SL_d$, that is, the $\SL_d[t^{-1}]$-orbit of the unit point. Let $\lambda \leq d \omega_1$ be a dominant weight. Recall that we can consider $\lambda$ as a partition of $d$. Then we have the corresponding $SL_d(\cO)$-orbit closure $\overline{\Gr^\lambda}$ and the $U(\cK)$-orbit $S_0$. Following Mirkovi\'c and Vilonen, we can identify 
the compactly supported cohomology $H_c^\top( \overline{\Gr^\lambda} \cap S_0)$ with identified with the $0$-weight space of $V(\lambda)$.

Recall also that there is $\SL_d$-equivariant embedding 
\begin{align}
  \label{eq:73}
j:\cN_d \hookrightarrow \Gr
\end{align}
originally due to Lusztig \cite{Lusztig-1981} and defined as follows. Given $x \in \cN_d$, we can form the element $1 - t^{-1} x \in SL_d[t^{-1}]$, which we map to $\Gr$ by acting on the unit point.
The image of \eqref{eq:73} is precisely $ \overline{\Gr^{n\omega_1}} \cap \Gr_0$.
Under \eqref{eq:73}, the image of $\OO_\lambda$ is $ \Gr^{\lambda} \cap \Gr_0$, and the image of $\fu \subset \cN_d$ is $S_0$.

Therefore, $j$ induces an isomorphism:
\begin{align}
  \label{eq:69}
  j_\lambda : \overline{\OO_\lambda} \cap \fu \overset{\sim}{\rightarrow} \overline{\Gr^\lambda} \cap S_0
\end{align}
Therefore \emph{a fortiori} we have a bijection
\begin{align}
  \label{eq:107}
 j_\lambda :  \Irr(\overline{\OO_\lambda} \cap \fu) \overset{\sim}{\rightarrow} \Irr(\overline{\Gr^\lambda} \cap S_0)
\end{align}
The irreducible components of $\overline{\OO_\lambda} \cap \fu$ are usually called \terminology{orbital varieties}.

\begin{Remark}
Usually, one defines orbital varieties as closures in $\fu$ of irreducible components of $\OO_\lambda \cap \fu$. However, using the map $j_\lambda$, we can identify these with irreducible components of $\Gr^\lambda \cap S_0$. The work of Mirkovi\'c and Vilonen shows that the irreducible components of $\Gr^\lambda \cap S_0$ coincide with the irreducible components of $\overline{\Gr^\lambda} \cap S_0$. Applying $j_\lambda^{-1}$, we see that orbital varieties are precisely the irreducible components of $\overline{\OO_\lambda} \cap \fu$. We do not know if this holds more generally in other types (where the comparison map $j_\lambda$ to the affine Grassmannian does not exist).
\end{Remark}

\subsubsection{Identifying orbital varieties with Springer components}
\label{subsubsec:orb-vars-and-springer-components}
\newcommand{\Conv}{\mathrm{Conv}}

Let us briefly recall how one identifies orbital varieties with Springer components. Recall that we have the Springer resolution $\mu_{1^d} : \tcN_{1^d} \rightarrow \cN_d$. Let $x \in \OO_\lambda$. Because the centralizer of $x$ in $G = \GL_d$ is connected, we have a natural bijection between $\Irr(\tcN_{1^d})$ and $\Irr( (\mu_{1^d})^{-1}(\OO_\lambda))$. One can also realize
\begin{align}
  \label{eq:81}
   (\mu_{1^d})^{-1}(\OO_\lambda) = G \times^B (\OO_\lambda \cap \fu)
\end{align}
Again we have a natural bijection between  $\Irr(G \times^B (\OO_\lambda \cap \fu))$ and $ \Irr(\OO_\lambda \cap \fu)$. Finally, we have a bijection $ \Irr(\OO_\lambda \cap \fu)$ between $\Irr(
\overline{\OO_\lambda} \cap \fu)$. To summarize, we have constructed a bijection
\begin{align}
  \label{eq:108}
  s_\lambda : \Irr(\tcN_{1^d}) \overset{\sim}{\rightarrow} \Irr(\OO_\lambda \cap \fu)
\end{align}

\subsection{The Braverman-Gaitsgory-Vybornov construction and the second identification of MV cycles and orbital varieties}

In \cite{Braverman-Gaitsgory-Vybornov}, Braverman, Gaitsgory, and Vybornov (to be referred to as ``the authors'' for the remainder of this section for brevity) construct a bijection between MV cycles and big Spaltenstein components. In the special case of $\lambda \in \cP^{++}_{d,d}$, the big Spaltenstein components are usual Springer components. Combining this with the bijection $s_\lambda$ between Springer components and orbital varieties, we obtain another bijection:
\begin{align}
  \label{eq:109}
  \beta_\lambda :  \Irr(\overline{\OO_\lambda} \cap \fu) \overset{\sim}{\rightarrow} \Irr(\overline{\Gr^\lambda} \cap S_0)
\end{align}
We will show that this bijection agrees with the bijection $j_\lambda$ constructed above.

To aid the reader, we will following the notation in  \cite[\S\S 1,2]{Braverman-Gaitsgory-Vybornov} closely. We will refer the reader to their paper for background on the lattice model of $\GL_n$ affine Grassmannians. The authors define $E$ and $V$ are two distinct copies of $\CC^n$ that we canonically identify. The two vector spaces play different roles hence the reason for distinguishing them. Unfortunately, the vector space called $V$ in \cite{Braverman-Gaitsgory-Vybornov} corresponds to the vector space $E$ in \cite{Braverman-Gaitsgory}. This corresponds to the vector space we have been calling $\E$ elsewhere in the paper.

We will be interested in the case of $d=n$ and $\mu = 1^d = (1,\ldots,1)$ according to their notation.

\subsubsection{$\Gr_E$}

The $\Gr_E$ is the affine Grassmannian for $\GL(E)$, and $\Gr_E^{d,-}$ consists of lattices $\cM$ containing the standard lattice $\cM_0$ with $\dim \cM/\cM_0 = d$. We have $ \Gr_E^{-d \omega_1}$ where $-d \omega_1 = (-d,0,\ldots,0)$. Then we have an open embedding
\begin{align}
  \label{eq:87}
  j_E : \cN_d \hookrightarrow \Gr_E^{d,-}
\end{align}
given by sending a nilpotent matrix $x \in \cN_d$ to the lattice:
\begin{align}
  \label{eq:88}
  (1-x t^{-1})^{-1} t^{-1} \cM_0
\end{align}
Recall that because $x$ is a nilpotent $d \times d$ matrix, we have:
\begin{align}
  \label{eq:89}
  (1-x t^{-1})^{-1} = 1 + t^{-1}x + \cdots + t^{-(n-1)}x^{n-1}
\end{align}

There is also the space $\Conv^{1^d,-}(\Gr_E)$ which consists of sequences of lattices $(\cM_1,\ldots,\cM_d)$ with $\cM_{i-1} \subset \cM_{i}$ and $\dim \cM_{i}/\cM_{i-1} = 1$. We have an open embedding (after taking reduced scheme structures)
\begin{align}
  \label{eq:90}
 \tilde{j}_E :\tcN_{1_d} \hookrightarrow \Conv^{1^d,-}(\Gr_E)
\end{align}
sending a pair $(x,0 = F_0 \subset F_1 \subset \cdots \subset F_d = E )$
of nilpotent operator $x$ and an invariant flag $F_0 \subset F_1 \subset \cdots \subset F_d = E$ to the sequence of lattices $(\cM_1,\ldots,\cM_d)$ defined by:
\begin{align}
  \label{eq:91}
  \cM_i =  \cM_0 \oplus (1 - t^{-1} x)^{-1} t^{-1} F_i
\end{align}

The authors construct a Cartesian square of stacks
\begin{equation}
  \label{eq:94}
  \begin{tikzcd}
\Conv^{1^d,-}(\Gr_E) \arrow[r] \arrow[d] \arrow[dr, phantom, "\square"]& \tcN_{1^d} / \GL_d \arrow[d] \\    
\Gr^{d,-}_E \arrow[r] & \cN_{d} / \GL_d \\    
  \end{tikzcd}
\end{equation}
that they use to compare the Geometric Satake action to the Braverman-Gaitsgory action we have considered above.
This can be extended to the following two Cartesian squares
\begin{equation}
  \label{eq:95}
\begin{tikzcd}
\tcN_{1^d} \arrow[d] \arrow[r,"\tilde{j_E}"] \arrow[dr, phantom, "\square"] & \Conv^{1^d,-}(\Gr_E) \arrow[r] \arrow[d] \arrow[dr, phantom, "\square"]& \tcN_{1^d} / \GL_d \arrow[d] \\    
\cN_{d} \arrow[r,"j_E"] & \Gr^{d,-}_E \arrow[r] & \cN_{d} / \GL_d \\    
\end{tikzcd}
\end{equation}
where the composed horizontal maps are the tautological quotient maps.

\subsubsection{$\Gr_V$}

The $\Gr_V$ is the affine Grassmannian for $\GL(V)$, and $\Gr_V^{d,+}$ consists of lattices $\cM'$ contained in the standard lattice $\cM'_0$ with $\dim \cM'_0/\cM' = d$. We have $ \Gr_V^{d,+} = \Gr_V^{d \omega_1}$ where $d \omega_1 = (d,0,\ldots,0)$. Then we have an open embedding
\begin{align}
  \label{eq:087}
  j_V : \cN_d \hookrightarrow \Gr_V^{d}
\end{align}
given by sending a nilpotent matrix $x \in \cN_d$ to the lattice:
 \begin{align}
   \label{eq:088}
   (1-x t^{-1}) t \cM'_0
 \end{align}

\subsubsection{$\cP_{loc}$}

The authors define $\cP_{loc}$ to be the space of triples $(\cM,\cM',\alpha)$ where  $\cM \in \Gr_E^{d,-}$, $\cM' \in \Gr_V^{d,+}$, and $\alpha$ is a $\CC[[t]]$-equivariant isomorphism:
\begin{align}
  \label{eq:96}
  \alpha : \cM/\cM_0 \overset{\sim}{\rightarrow} \cM'_0/\cM'
\end{align}
We have projections $\pi_E : \cP_{loc} \rightarrow \Gr_E^{d,-}$ and 
$\pi_V : \cP_{loc} \rightarrow \Gr_V^{d,+}$.

We define the space $\cN_{loc}$ to be the set of triples $(x,y,g)$ where $x,y \in \cN_d$, $g \in \GL_d$, and $y = g x g^{-1}$. Then we have the  projections $\pi_1, \pi_2 : \cN_{loc} \rightarrow \cN_d$. Notice also that we have $\cN_{loc} \cong \GL_d \times \fu$ under the map $(x,y,g) \mapsto (y,g)$.

We define a map
\begin{align}
  \label{eq:92}
  b : \cN_{loc} \rightarrow \cP_{loc}
\end{align}
by sending $(x,y,g)$ to $(j_E(x),j_V(y),\alpha)$ where $\alpha$ is the isomorphism
\begin{align}
  \label{eq:93}
 \alpha : (1- t^{-1} x)^{-1} t^{-1}\cM_0 / \cM_0 \overset{\sim}{\rightarrow} \cM'_0/ (1- t^{-1} y) t\cM'_0
\end{align}
defined by 
\begin{align}
  \label{eq:97}
 \alpha( (1- t^{-1} x)^{-1} t^{-1} v) = g v  
\end{align}
for all $v \in \cM_0$. Here we identify $\cM_0$ and $\cM'_0$. Then it is easy to see that $b$ is an open embedding and that the following diagram is Cartesian:
\begin{equation}
  \label{eq:98}
  \begin{tikzcd}
   \cN_{loc} \arrow[d,"\pi_1 \times \pi_2"] \arrow[r,"b"] \arrow[dr, phantom, "\square"]&  \cP_{loc} \arrow[d,"\pi_E \times \pi_V"] \\
  \cN_d \times \cN_d \arrow[r,"j_E \times j_V"] &  \Gr^{d,-}_E \times \Gr^{d,+}_V
  \end{tikzcd}
\end{equation}

\subsection{Identifying MV cycles with Springer components}

Let $\cP_{loc}(1^d) = \pi_v^{-1}( \cap S_{1^d}) $. Note that $j_V^{-1}(S_{1^d}) = \fu$. Furthermore, the authors construct a factorization of the map $\pi_E :\cP_{loc}(1^d)  \rightarrow \Gr^{d,-}_E$ by a map:
\begin{align}
  \label{eq:99}
 \pi^{\Conv}_E : \cP_{loc}(1^d) \rightarrow \Conv^{1^d,-}(\Gr_E)
\end{align}
One can check that the image of $\pi^{\Conv}_E$ is contained in the image of $\tilde{j}_E$. Let us write $\Conv^{1^d,-}(\Gr_E)^\circ$ for the image.

We have a diagram: 
\begin{equation}
  \label{eq:101}
\begin{tikzcd}
 \cP_{loc}(1^d)  \arrow[r] \arrow[d] & \Gr^{d,+}_V \cap S_{1^d} \\
 \Conv^{1^d,-}(\Gr_E)^\circ
\end{tikzcd}
\end{equation}
Let $\cN_{loc}(1^d)$. Under the isomorphism $\cN_{loc} \cong \GL_d \times \fu$, we have $\cN_{loc}(1^d) \cong \GL_d \times \fu$. One can check that diagram \eqref{eq:101} is isomorphic to the diagram
\begin{equation}
  \label{eq:100}
\begin{tikzcd}
 G \times \fu  \arrow[r] \arrow[d] & \fu \\
 G \times^B \fu 
 \end{tikzcd}
\end{equation}
via the obvious isomorphisms.

Let $\lambda \in \cP^{++}_{d,d}$. Then define $\cP_{loc}^\lambda(1^d) = \pi_E^{-1}( \Gr^\lambda_V \cap S_{1^d}$. Let $\Conv^{1^d,-}(\Gr_E)^{\lambda,\circ}$ be the image of $\cP_{loc}^\lambda(1^d)$ under the map $\pi^{\Conv}_E$.
Then diagram \eqref{eq:101} restricts to a diagram:
\begin{equation}
  \label{eq:102}
\begin{tikzcd}
 \cP^\lambda_{loc}(1^d)  \arrow[r] \arrow[d] & \Gr^\lambda_V \cap S_{1^d} \\
\Conv^{1^d,-}(\Gr_E)^{\lambda,\circ} 
\end{tikzcd}
\end{equation}
The authors show that the arrows in \eqref{eq:102} are smooth with all fibers non-empty. Therefore, the set of irreducible components for each of the three spaces in \eqref{eq:102} are identified. The irreducible components of $\Gr^\lambda_V \cap S_{1^d}$ are exactly MV cycles of weight $(\lambda,1^d)$, and the irreducible components of $\Conv^{1^d,-}(\Gr_E)^{\lambda,\circ}$ are identified with components of a Springer fiber of type $\lambda$. Combining this with the map $\alpha_\lambda$ identifying Springer components with orbital varieties and the map identifying $\Gr^d_V$ with $\Gr_{\SL_d}$, we obtain a bijection:
\begin{align}
  \label{eq:110}
  \beta_\lambda :  \Irr(\overline{\OO_\lambda} \cap \fu) \overset{\sim}{\rightarrow} \Irr(\overline{\Gr^\lambda} \cap S_0)
\end{align}

Corresponding to the isomorphism between \eqref{eq:101} and \eqref{eq:100}, the restriction of \eqref{eq:102} is
\begin{equation}
  \label{eq:103}
\begin{tikzcd}
 G \times ( \OO_\lambda \cap \fu)   \arrow[r] \arrow[d] & \OO_\lambda \cap \fu \\
 G \times^B (\OO_\lambda \cap\fu )
 \end{tikzcd}
\end{equation}
Notice that we therefore obtain a bijection between $
\Irr(G \times^B (\OO_\lambda \cap\fu ))$ and $\Irr(\OO_\lambda \cap \fu)$, which is exactly the same as the bijection $s_\lambda$ considered above.  Again using the comparison between $\Gr_V^{d}$ and $\Gr_{\SL_d}$, we have proved the following.

\begin{Theorem}
  \label{thm:the-two-bijections-of-irreducible-components-agree}
The two bijections  $j_\lambda$ and $\beta_\lambda$  comparing $\Irr(\overline{\Gr^\lambda} \cap S_0)$ and $\Irr(\overline{\OO_\lambda} \cap \fu)$ agree.
\end{Theorem}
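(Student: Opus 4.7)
The plan is to trace each bijection through its construction and show that they factor through a common geometric identification. Both maps send an orbital variety to an MV cycle by the same underlying formula $x \mapsto (1 - t^{-1}x) \cdot L_0$, once appropriate conventions are aligned, so the argument is essentially an unwinding.

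First, I would reconcile the two embeddings into the affine Grassmannian that appear in the story. Lusztig's map $j : \cN_d \hookrightarrow \Gr = \Gr_{\SL_d}$ sends $x$ to $(1 - t^{-1}x) \cdot \mathrm{unit}$, while the BGV map $j_V : \cN_d \hookrightarrow \Gr_V^{d,+}$ sends $x$ to $(1 - t^{-1}x) t \cM'_0$. The two differ only by the auto-equivalence ``multiplication by $t$'' that identifies $\Gr^0_{\GL_d}$ with $\Gr^d_{\GL_d}$, and, as recalled in the paper, this auto-equivalence intertwines the $\SL_d$-weight space identifications. So modulo this identification, $j_V$ restricted to $\OO_\lambda \cap \fu$ is literally $j_\lambda$.

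Next I would unwind the definition of $\beta_\lambda$ by reading diagram \eqref{eq:102} and its identification with the restriction \eqref{eq:103}. The bijection $\beta_\lambda$ is the composite
\begin{equation*}
\Irr(\overline{\OO_\lambda} \cap \fu) \xrightarrow{s_\lambda^{-1}} \Irr(\tcN^x_{1^d}) \xleftarrow{\sim} \Irr(\Conv^{1^d,-}(\Gr_E)^{\lambda,\circ}) \xleftarrow{\sim} \Irr(\cP^\lambda_{loc}(1^d)) \xrightarrow{\sim} \Irr(\Gr^\lambda_V \cap S_{1^d}),
\end{equation*}
where the middle two arrows come from smooth surjections with connected fibers. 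After passing to \eqref{eq:103} the left leg becomes exactly the tautological quotient $G \times (\OO_\lambda \cap \fu) \to G \times^B (\OO_\lambda \cap \fu)$ used in Section~\ref{subsubsec:orb-vars-and-springer-components} to define $s_\lambda$, so the composition of the left three arrows above is just the identity on $\Irr(\overline{\OO_\lambda} \cap \fu)$. The right leg of \eqref{eq:103} is the projection $G \times (\OO_\lambda \cap \fu) \to \OO_\lambda \cap \fu$, and under the identification $\Gr^\lambda_V \cap S_{1^d} \cong \overline{\OO_\lambda} \cap \fu$ induced by $j_V$ this map is $(g, u) \mapsto u$. Therefore $\beta_\lambda$ agrees with the bijection on irreducible components induced by $j_V$, which by the first step agrees with $j_\lambda$.

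The main obstacle will be bookkeeping: carefully matching $\Gr_V^{d,+}$ to $\Gr_{\SL_d}$ via the $t$-shift, pinning down which $S_\mu$ on $\Gr_V$ corresponds to $S_0$ on $\Gr_{\SL_d}$ (here $\mu = 1^d$ on the $\GL_d$-side becomes $0$ on the $\SL_d$-side after the $t$-shift), and checking that the orientation/sign conventions used by BGV in \eqref{eq:88}--\eqref{eq:97} agree with Lusztig's. Once those conventions are pinned down the rest of the argument is a direct diagram chase, and the heart of the matter is the already-verified identification of \eqref{eq:101} with \eqref{eq:100}.
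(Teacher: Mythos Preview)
Your proposal is correct and follows essentially the same approach as the paper: identify diagram \eqref{eq:102} with \eqref{eq:103}, observe that the left leg then reproduces $s_\lambda$ exactly while the right leg is $j_V$ composed with the $t$-shift to $\Gr_{\SL_d}$, and conclude that $\beta_\lambda = j_\lambda$. Your write-up is in fact a bit more explicit about the $t$-shift bookkeeping than the paper, which dispatches that point in a single clause.
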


\section{The Joseph-Hotta construction of Springer representations and a conjecture about Weyl group actions in general}

Because  $H_c^\top( \overline{\Gr^\lambda} \cap S_0)$ is identified with $V(\lambda)_0$ under Geometric Satake, we obtain a $S_d$-action on $H_c^\top( \overline{\Gr^\lambda} \cap S_0)$. On the other hand we have a bijection between $ \overline{\Gr^\lambda} \cap S_0$  and and the irreducible components of any Springer fiber $\tcN^x_{1^d}$ where $x \in \OO_\lambda$. A priori there are two such bijections, but by Theorem \ref{thm:the-two-bijections-of-irreducible-components-agree} the two bijections agree. Therefore we also have a $S_d$-action on $H_c^\top( \overline{\Gr^\lambda} \cap S_0)$ coming from Springer theory. By Theorem \ref{thm:bgv-weight-zero-springer-action} these two actions differ by tensoring with the sign representation of $S_d$.

Joseph conjectured \cite{Joseph}, and Hotta \cite{Hotta} later proved that one can construct the Springer action directly on the span of orbital varieties in terms of equivariant multiplicities. The data of these equivariant multiplicities is recorded by the so called Joseph polynomials. This will allow us to rephrase Theorem \ref{thm:bgv-weight-zero-springer-action} in a way that makes sense for arbitrary types and arbitrary dominant coweights $\lambda$. Therefore we can state a general conjecture that our work shows is true in type A and for $\lambda \leq d \omega_1$.

\subsection{Phrasing Joseph-Hotta construction}
We will phrase Joseph-Hotta construction in a way that is will make sense for MV cycles in general. Recall that the Borel-Moore homology $H_\top( \overline{\OO_\lambda} \cap \fu)$ is the vector space dual of $H_c^\top( \overline{\OO_\lambda} \cap \fu)$. Furthermore, $H_\top( \overline{\OO_\lambda} \cap \fu)$ has a basis indexed by orbital varieties $\Irr( \overline{\OO_\lambda} \cap \fu)$. Each orbital variety is $T$-invariant and therefore has an equivariant fundamental class in  $H^T_\top( \overline{\OO_\lambda} \cap \fu)$.

We have a proper pushforward map
\begin{align}
  \label{eq:74}
H^T_\bullet(0) \rightarrow H^T_\bullet(\overline{\Gr^\lambda} \cap S_0)
\end{align}
that becomes an isomorphism after tensoring with the fraction field of $H^\bullet_T(\pt)$. In particular, for each irreducible component $Z \in \Irr( \overline{\OO_\lambda} \cap \fu)$, we can consider the $T$-equivariant multiplicity $e^T_0(Z)$ of $Z$ at the fixed point $0$.
\begin{Theorem}[\cite{Hotta}]
  \label{thm:joseph-hotta-construction}
  The map
  \begin{align}
    \label{eq:78}
    H_\top( \overline{\OO_\lambda} \cap \fu) \rightarrow H^\bullet_T(\pt)
  \end{align}
  defined by 
  \begin{align}
    \label{eq:79}
    [Z] \mapsto e^T_0(Z)
  \end{align}
  for each orbital variety $Z$ and extending linearly is injective. Furthermore, its image is a Weyl group invariant subspace of $H^\bullet_T(\pt)$.

Let $x \in \OO_\lambda$. Under the bijection
  \begin{align}
    \label{eq:80}
   s_\lambda: \Irr( \overline{\OO_\lambda} \cap \fu) \overset{\sim}{\rightarrow} \Irr(\tcN^x_{1^d})
  \end{align}
  the Weyl group action induced by \eqref{eq:78} action coincides with the Springer action on $H_\top(\tcN^x_{1^d})$ tensored with the sign character.
\end{Theorem}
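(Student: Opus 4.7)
The statement has three components: injectivity of $[Z] \mapsto e^T_0(Z)$, $W$-stability of the image inside $H^\bullet_T(\pt)$, and its identification with the Springer module tensored with the sign character.

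For injectivity, I would use that each orbital variety $Z \subseteq \fu$ is $T$-invariant, has unique $T$-fixed point $0$, and is homogeneous under the scaling action. Thus $e^T_0(Z) = J_Z / \eu(\fu)$ where $J_Z \in H^\bullet_T(\pt)$ is the multidegree (Joseph polynomial) of $Z$ in $\fu$. Classical results of Joseph identify these with characteristic cycles, and linear independence of $\{J_Z\}$ over top-dimensional orbital varieties of a fixed $\lambda$ follows because the leading monomials of the $J_Z$ are supported on distinct combinatorial data governed by the $T$-action on $Z$. Alternatively, one observes that the map factors as $H_\top(\overline{\OO_\lambda} \cap \fu) \hookrightarrow H^T_\top(\overline{\OO_\lambda} \cap \fu) \to H^T_\bullet(\fu)$, and concludes via the Thom isomorphism $H^T_\bullet(\fu) \cong H^\bullet_T(\pt)$ together with the linear independence of the fundamental classes $[Z]$ in top Borel--Moore homology.

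For $W$-stability and the comparison with Springer theory, the key is to lift everything to $\tcN_{1^d} = G \times^B \fu$ via~\eqref{eq:18}. Under $s_\lambda$, a Springer component $C \in \Irr(\tcN^x_{1^d})$ corresponds to the orbital variety $Z$ obtained by viewing $C$ inside $G \times^B (\OO_\lambda \cap \fu)$ and projecting to the $\fu$-factor. I would then apply Brion's localization (Theorem~\ref{thm:brions-localization-thm}) to the embedding $\tcN^x_{1^d} \hookrightarrow \cF_{1^d}$, whose $T$-fixed points are $\{wB\}_{w \in S_d}$: the equivariant class $[C]$ decomposes as $\sum_w e^T_{wB}(C) \cdot [wB]$. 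Under the $T$-equivariant identification of the fiber of $G \times^B \fu \to \cF_{1^d}$ over $eB$ with $\fu$, the equivariant multiplicity $e^T_{eB}(C)$ is, up to the fixed factor $\eu(\fu)$, precisely $e^T_0(Z)$. By $G$-equivariance and since $W = N_G(T)/T$ permutes the fixed points, $e^T_{wB}(C) = w \cdot e^T_{eB}(C)$ where $w$ acts on $H^\bullet_T(\pt)$ in the usual way.

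With this localization description, the Springer $W$-action on $H^T_\top(\tcN^x_{1^d})$ transports to an action on $\bigoplus_w H^\bullet_T(\pt) \cdot [wB]$ that permutes fixed points compatibly with the $W$-action on coefficients; projecting onto the $[eB]$-coefficient yields the desired map $[C] \mapsto e^T_0(Z)$, proving $W$-stability of the image together with compatibility with the Springer action. The sign twist originates from the same mechanism as in Theorem~\ref{thm:bgv-weight-zero-springer-action}: the Chevalley involution exchanging $\fu$ and $\fu^-$ introduces a sign on top Borel--Moore homology that realizes the sign character of $S_d$. The \textbf{main obstacle} will be pinning down this sign and the Euler-class bookkeeping: Springer's convention (where $H_0(\cF_{1^d})$ is trivial) and the natural $W$-action on $H^\bullet_T(\pt)$ produced by localization (which would make $H^{\top}$ trivial) differ by exactly this twist, and tracking all Euler-class contributions through the chain $\cF_{1^d} \leftarrow \tcN_{1^d} = G \times^B \fu \to \cN_d \leftarrow Z$ is the technical core of the argument.
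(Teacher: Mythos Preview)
The paper does not supply its own proof of this theorem: it is quoted as a result of Hotta, and immediately afterwards the reader is pointed to \cite{Borho-Brylinski-MacPherson}, especially Chapter~4, for a full treatment. So there is no argument in the paper to compare against.

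That said, your plan has a genuine gap. You propose to apply $T$-equivariant localization (Theorem~\ref{thm:brions-localization-thm}) to the embedding $\tcN^x_{1^d} \hookrightarrow \cF_{1^d}$ and to expand a Springer component as $[C] = \sum_{w} e^T_{wB}(C)\cdot[wB]$. But for $x \neq 0$ the Springer fiber $\tcN^x_{1^d}$ is \emph{not} $T$-invariant: the only $T$-fixed point of $\cN_d$ is $0$, so $T$ does not preserve the fiber over $x$. Hence $C$ has no $T$-equivariant fundamental class, the symbols $e^T_{wB}(C)$ are undefined, and the subsequent assertion $e^T_{wB}(C) = w\cdot e^T_{eB}(C)$ ``by $G$-equivariance'' has no meaning, since $C$ is not $G$-invariant either.

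The repair is to pass to the $G$-saturated picture you allude to but do not actually use: the components of $(\mu_{1^d})^{-1}(\OO_\lambda) = G \times^B (\OO_\lambda \cap \fu)$ are precisely $G \times^B Z$ for orbital varieties $Z$, and these \emph{are} $G$-invariant, so one may localize their classes in $H^T_\bullet(\tcN_{1^d})$ and the Weyl symmetry of equivariant multiplicities at the various $wB$ does hold via the $N_G(T)$-action. The remaining step, matching this with the Springer action on $H_\top(\tcN^x_{1^d})$ for a specific $x$, is not a formality of localization; it requires a genuine comparison (Hotta's specialization argument, or equivalently the characteristic-cycle/$\cD$-module approach of \cite{Borho-Brylinski-MacPherson}), and that comparison is where the actual content of the theorem resides.
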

\noindent We refer the reader to the book of Borho, Brylinski, and MacPherson \cite{Borho-Brylinski-MacPherson}, especially chapter 4, for a thorough description of this story.

\begin{Remark}
  
Because $\fu$ is smooth, one can write 
\begin{align}
  \label{eq:77}
  e^T_0(Z) = \frac{J_Z}{\eu_0(\fu)}
\end{align}
where $J_Z \in H^\bullet_T(\pt)$. In commutative algebra language, the polynomial $J_Z$ is essentially the $T$-equivariant multidegree of $Z$ embedded in $\fu$. In this specific setting it is called the \terminology{Joseph polynomial} of the orbital variety $Z$. Usually the above theorem is phrased in terms of Joseph polynomials. As $\eu_0(\fu)$ transforms under the sign character, the analogue of Theorem \ref{thm:joseph-hotta-construction} stated with Joseph polynomials in place of equivariant multiplicities does not require tensoring with sign representation.
\end{Remark}

\subsection{Rephrasing Theorem \ref{thm:bgv-weight-zero-springer-action} in terms of equivariant multiplicities}

Let $\bG$ be a reductive group as in \S . Let $\lambda$ be a dominant coweight that lies in the coroot lattice. We can consider the representation $V(\lambda)$ of the dual group $\bG^\vee$ and the zero weight space $V(\lambda)_0$. The Weyl groups of $\bG$ and $\bG^\vee$ are canonically identified; write $W$ for this group.

Then we have the identification:
\begin{align}
  \label{eq:113}
 V(\lambda)_0 \cong  H^\top_c(\overline{\Gr^\lambda} \cap S_0)
\end{align}
Therefore, we have a $W$-action on $H^\top_c(\overline{\Gr^\lambda} \cap S_0)$. Taking vector space dual we obtain a $W$-action on $H_\top(\overline{\Gr^\lambda} \cap S_0)$. 
We can now state the main conjecture of this paper.
\begin{Conjecture}
  \label{conj:equivariant-multiplicities-and-weight-zero}
  The map
  \begin{align}
    \label{eq:111}
    H_\top(\overline{\Gr^\lambda} \cap S_0) \rightarrow \FF(H^\bullet_T(\pt))
  \end{align}
  defined by 
  \begin{align}
    \label{eq:79}
    [Z] \mapsto e^T_0(Z)
  \end{align}
  for each MV cycle $Z$ and extending linearly
  is $W$-equivariant.
\end{Conjecture}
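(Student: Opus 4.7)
The plan is to deduce the conjecture in the type-A small case $\bG = \SL_d$, $\lambda \leq d\omega_1$ by reducing it to Hotta's theorem via the Lusztig embedding. First I would assemble the chain of identifications. By Theorem~\ref{thm:bgv-comparison-with-mv-basis}, the Satake realization of $V(\lambda)_0$ as $H^\top_c(\overline{\Gr^\lambda} \cap S_0)$ is $\GL(\E)$-equivariantly (hence $W$-equivariantly) isomorphic, sending MV cycles to top-dimensional irreducible components, to $H^{2d_\lambda}(\tfg^x_{1^d})$ with its BGV basis; and at $\dd = 1^d$ with $x$ nilpotent, one has $\tfg^x_{1^d} = \tcN^x_{1^d}$ as reduced schemes, so this is a Springer fiber. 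By Theorem~\ref{thm:bgv-weight-zero-springer-action}, the $W$-action on this cohomology is the Springer action tensored with the sign character. Dualizing (and using self-duality of Specht modules and of the sign character over $\CC$), the induced $W$-action on $H_\top(\tcN^x_{1^d})$ is again the Springer action tensored with sign.

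Next I would compare the two bijections between orbital varieties and MV cycles. By Theorem~\ref{thm:the-two-bijections-of-irreducible-components-agree}, the Lusztig bijection $j_\lambda$ agrees with the BGV bijection $\beta_\lambda$, which by construction is the composition of the BGV identification of MV cycles with top Springer components followed by the classical bijection $s_\lambda$ to orbital varieties. Consequently, transporting the Satake $W$-action on $H_\top(\overline{\Gr^\lambda} \cap S_0)$ back to $H_\top(\overline{\OO_\lambda} \cap \fu)$ via $j_\lambda^{-1}$ produces exactly the Springer-action-tensored-with-sign on orbital varieties (viewed through $s_\lambda$).

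Now I would invoke Theorem~\ref{thm:joseph-hotta-construction} of Hotta: the map $[Z'] \mapsto e^T_0(Z')$ from $H_\top(\overline{\OO_\lambda} \cap \fu)$, equipped with the Springer-tensored-with-sign action, to $\FF(H^\bullet_T(\pt))$ is $W$-equivariant. Composing with $j_\lambda^{-1}$ (a $W$-equivariant isomorphism by the previous step) shows that $[Z] \mapsto e^T_0(j_\lambda^{-1}(Z))$ is $W$-equivariant from $H_\top(\overline{\Gr^\lambda} \cap S_0)$ to $\FF(H^\bullet_T(\pt))$. The final check is that $j_\lambda$ preserves equivariant multiplicities at the fixed point $0$: since $j : \cN_d \hookrightarrow \Gr$ is a $T$-equivariant open embedding sending $0 \in \cN_d$ to the basepoint in $S_0$, the restriction $j_\lambda$ induces an isomorphism of $T$-equivariant completed local rings at $0$, and thus $e^T_0(j_\lambda(Z')) = e^T_0(Z')$ for every orbital variety $Z'$. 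This identifies the composition with the map appearing in the conjecture and concludes the argument in the type-A small case.

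The main obstacle, and the reason the conjecture remains open in general, is that all three inputs above are very specific to the type-A small setting: the BGV realization of the MV basis, the coincidence of big Spaltenstein varieties with Springer fibers at $\dd = 1^d$ combined with the smallness $\lambda \leq d\omega_1$, and the Lusztig embedding $\cN_d \hookrightarrow \Gr$ peculiar to type A. For general $\bG$ there is no nilpotent-cone shadow inside $\overline{\Gr^\lambda} \cap S_0$ into which one can import Hotta's theorem, and even within type A, once $\lambda \not\leq d\omega_1$, the equivariant calculation of \S~\ref{subsubsect-computing-T-a} no longer delivers $T_a \equiv -s_a$ on top homology of the relevant Spaltenstein varieties. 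I therefore expect that the hard step in extending the conjecture will be producing a direct formula for the Geometric Satake $W$-action on MV cycles, perhaps via Vasserot-style Chern-class constructions and equivariant localization on $\Gr$, together with an analogue of Hotta's theorem formulated intrinsically on $\overline{\Gr^\lambda} \cap S_0$.
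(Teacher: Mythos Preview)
Your proposal is correct and follows essentially the same route as the paper: combine Theorem~\ref{thm:bgv-weight-zero-springer-action} (the Satake $W$-action on the zero weight space equals Springer $\otimes$ sign), Theorem~\ref{thm:the-two-bijections-of-irreducible-components-agree} (the Lusztig and BGV bijections agree), and Theorem~\ref{thm:joseph-hotta-construction} (Hotta's equivariant-multiplicity realization of Springer $\otimes$ sign on orbital varieties). Your explicit check that the $T$-equivariant isomorphism $j_\lambda$ preserves equivariant multiplicities at $0$ is a detail the paper leaves implicit but is indeed needed to close the loop.
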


Combining Theorem \ref{thm:bgv-weight-zero-springer-action} and Theorem~\ref{thm:joseph-hotta-construction}, we obtain the following.
\begin{Theorem}
  \label{thm:main-theorem-in-text}
  Conjecture \ref{conj:equivariant-multiplicities-and-weight-zero} is true for $\SL_d$ and $\lambda \leq d \omega_1$.
\end{Theorem}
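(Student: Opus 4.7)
The plan is to assemble the main theorems of the preceding sections into a single diagram chase. First I would use Theorem \ref{thm:the-two-bijections-of-irreducible-components-agree} together with Theorem \ref{thm:bgv-comparison-with-mv-basis} of Braverman-Gaitsgory-Vybornov to transport the problem from MV cycles to Springer components: via the equality $j_\lambda = \beta_\lambda$ and the classical bijection $s_\lambda$ between Springer components and orbital varieties, the MV basis of $H_\top(\overline{\Gr^\lambda} \cap S_0)$ corresponds to the basis of Springer components of $H_\top(\tcN^x_{1^d})$ for $x \in \OO_\lambda$. Dualizing Theorem \ref{thm:bgv-weight-zero-springer-action} then says that, under this identification, the Satake Weyl group action on $H_\top(\overline{\Gr^\lambda} \cap S_0)$ coincides with the Springer action on $H_\top(\tcN^x_{1^d})$ tensored with the sign character. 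Translating a second time through $s_\lambda$ to the orbital variety basis of $H_\top(\overline{\OO_\lambda} \cap \fu)$, the Joseph-Hotta theorem (Theorem \ref{thm:joseph-hotta-construction}) realizes exactly this same twisted Springer action via the equivariant multiplicity map $[Z^{\mathrm{orb}}] \mapsto e^T_0(Z^{\mathrm{orb}})$. Since the sign twist appears on both sides of the comparison, it cancels, and the resulting map is $W$-equivariant.

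Next I would verify the remaining geometric input, namely that equivariant multiplicities themselves transfer under Lusztig's embedding. Concretely, $j : \cN_d \hookrightarrow \Gr$ sends $x$ to the coset $(1 - xt^{-1}) \bG(\cO)$, so for $h \in T$ we have
\begin{align*}
h \cdot j(x) = (1 - (hxh^{-1}) t^{-1}) h \bG(\cO) = j(hxh^{-1}),
\end{align*}
because $h \in \bG(\cO)$. Thus $j$ intertwines the conjugation action on $\cN_d$ with the left-multiplication action on $\Gr$, and it sends $0 \in \fu$ to the coweight-zero point of $\Gr$, which is the unique $T$-fixed point of $\overline{\Gr^\lambda} \cap S_0$. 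Because $j_\lambda$ is an open embedding of $T$-varieties at the basepoint, equivariant multiplicities at $0$ agree: $e^T_0(Z) = e^T_0(j_\lambda^{-1}(Z))$ for every MV cycle $Z$. The equivariant multiplicity map on MV cycles therefore factors through the one on orbital varieties, and the $W$-equivariance established by the chain above transfers verbatim.

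The step I expect to require the most care is the bookkeeping of sign twists and of the various Langlands-dual identifications. One must confirm that the sign twist produced by Theorem \ref{thm:bgv-weight-zero-springer-action} on the homological side matches the sign twist inherent in the Joseph-Hotta description (where $\eu_0(\fu)$ itself transforms by the sign character), and that the passage between $\SL_d$-coweights and $\mathrm{PGL}_d$-weights, together with the identification of $\Gr_{\SL_d}$ with $\Gr^0_{\GL_d}$ used to relate the Satake and Braverman-Gaitsgory pictures, does not introduce a spurious additional twist. Assuming this bookkeeping comes out as expected, none of the individual steps require new geometric input beyond the theorems already proved in \S\S3--5, and the theorem is immediate from their combination.
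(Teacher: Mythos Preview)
Your proposal is correct and follows essentially the same route as the paper: the paper's proof is simply the sentence ``Combining Theorem \ref{thm:bgv-weight-zero-springer-action} and Theorem~\ref{thm:joseph-hotta-construction}, we obtain the following,'' preceded by the discussion at the start of \S\ref{sec:MV-cycles-and-orbital-varieties}'s successor section invoking Theorem \ref{thm:the-two-bijections-of-irreducible-components-agree} to reconcile the two bijections. Your additional paragraph verifying that Lusztig's embedding $j$ is $T$-equivariant and hence preserves equivariant multiplicities at the basepoint is a detail the paper leaves entirely implicit (it is needed to pass from the Joseph--Hotta statement for orbital varieties to the conjecture as stated for MV cycles), so if anything your write-up is slightly more complete.
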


\bibliographystyle{amsalpha}
\bibliography{references}

\end{document}